\title{Minimizing CM degree and specially K-stable varieties}
\author{Masafumi Hattori}
\address{Department of Mathematics, Faculty of Science, Kyoto University, Kyoto, 606-8502, Japan}
\email{hattori.masafumi.47z@st.kyoto-u.ac.jp}
\def\coloneq{\mathrel{\mathop:}=}
\newtheorem{thm}{Theorem}[section]
\newtheorem{lem}[thm]{Lemma}
\newtheorem{prop}[thm]{Proposition}
\newtheorem{conj}[thm]{Conjecture}
\newtheorem{claim}{Claim}
\newtheorem{cor}[thm]{Corollary}
\theoremstyle{definition}
\newtheorem{de}[thm]{Definition}
\newtheorem{ass}[thm]{Condition}
\newtheorem{ex}[thm]{Example}
\newtheorem{rem}[thm]{Remark}
\begin{document}
\maketitle
\begin{abstract}
We prove that the 
degree of the CM line bundle for a normal family over a curve with fixed general fibers is 
strictly minimized if 
the special fiber is 
either 
\begin{itemize}
    \item a smooth projective manifold with a unique  cscK metric or 
    \item ``specially K-stable", which is a new class we introduce in this paper. 
\end{itemize}
This phenomenon, 
as conjectured by Odaka (cf., \cite{O5}), 
is a quantitative  strengthening of the 
separatedness conjecture 
of moduli spaces of polarized K-stable varieties. 

The above mentioned 
special K-stability implies the original K-stability and a lot of cases satisfy it e.g., K-stable log Fano, klt Calabi-Yau (i.e., $K_X\equiv0$), lc varieties with the ample canonical divisor and uniformly adiabatically K-stable klt-trivial fibrations over curves (cf., \cite{Hat}). 
\end{abstract}
\section{Introduction}
We work over $\mathbb{C}$ but all results in \S \ref{3} except Corollary \ref{cmcsck} or Theorem \ref{spec} also hold for any algebraically closed field with characteristic 0. 

\subsection{Separatedness of moduli spaces of K-stable varieties}\label{intro}
To construct moduli spaces of polarized algebraic varieties, the following condition is one of the most important ingredients and guarantees ``separatedness" in the following sense.
\begin{itemize}
\item[$\clubsuit$] Let $(X,L)\to C$ and $(X',L')\to C$ be two proper flat normal families of $n$-dimensional polarized varieties of a certain class $(*)$ over a smooth curve. If the generic fibers of the two families coincide, then their special fibers $(X_0,L_0)$ and $(X'_0,L'_0)$ are ``equivalent" in some sense over a closed point $0\in C$. 
\end{itemize}
This is proved in a few cases, for example, when $(*)$ is the class of stable curves in \cite{DM}. For general K-ample ($K_X$ is ample literally) slc pairs, $\clubsuit$ holds similarly by the theory of MMP (cf., \cite{KSB}, \cite{Ko2}). They are known to be K-stable by \cite{O}. It is also proved in the unpublished note of Boucksom \cite{Bou2} that $\clubsuit$ holds for klt minimal models in a similar way. On the other hand, it is easy to see that $\clubsuit$ does not hold in general at least for K-unstable Fano varieties by \cite{LX}. In the recent studies of Fano varieties, K-stability plays an important role in construction of the moduli space of K-(poly)stable Fano varieties (so-called K-moduli cf., \cite{X}), and it is proved that $\clubsuit$ holds when ``equivalence'' is S-equivalence for K-semistable Fano varieties by Blum and Xu \cite{BX}. However, to check whether $\clubsuit$ holds or not for any class $(*)$ has still been one of the most challenging problems in algebraic geometry.

\subsection{K-stability and CM minimization}\label{1.2*}K-stability was originally introduced by \cite{T}, \cite{Dn2} in K\"{a}hler geometry to study when constant scalar curvature K\"{a}hler (for short cscK) metrics exist. 
Note that K-stability can be rephrasable as follows (cf., Definition \ref{defkst23}). If the trivial test configuration minimizes the Donaldson-Futaki (DF) invariants of normal test configurations in the strict sense, then $(X,L)$ is K-stable. Roughly speaking, this is one of the algebro-geometric counterparts of the result of \cite{Mab} on Fano manifolds, which states that the K-energy takes a local minimum at a K\"{a}hler-Einstein metric. We remark that it is known by 
\cite{CC}, if the K-energy is proper and $\mathrm{Aut}(X,L)$ is discrete, it attains a unique global minimum at a unique cscK metric. So to speak, K-stability is characterized by ``DF minimization" in the sense of Conjecture \ref{mainconj} below. 

On the other hand, Paul and Tian \cite{PT} introduced the Chow-Mumford (CM) line bundle, which is a $\mathbb{Q}$-line bundle defined on the base of a flat family of polarized varieties. Note that the degree of the CM line bundle over a curve, which we call the CM degree, is a generalization of the DF invariants of test configurations (cf., \cite[Lemma 2.5]{FR}).

Odaka proposed the following on CM degrees, which he called the {\it CM minimization conjecture}. 

 \begin{conj}[CM minimization, cf., {\cite[Conjecture 8.1]{O5}}]\label{mainconj}
 Let $\pi:(X,L)\to C$ be a polarized family over a smooth projective curve $C$ such that $(X_0,L_0)$ is K-semistable  (cf., Definition \ref{notate}). Let $\mathrm{CM}((X,L)/C)$ be the CM degree. Then 
 \[
 \mathrm{CM}((X,L)/C)\le \mathrm{CM}((X',L')/C)
 \]
 for any polarized family $\pi':(X',L')\to C$ such that there exists a $C^\circ$-isomorphism $f^{\circ}:(X,L)\times_CC^{\circ}\cong (X',L')\times_CC^{\circ}$. 
 
 Furthermore, if $(X_0,L_0)$ is K-stable and $X'$ is normal, then equality holds iff $f^{\circ}$ can be extended to $f:(X,L)\cong(X',L')$ over $C$ entirely.
 \end{conj}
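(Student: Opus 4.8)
The plan is to translate both CM degrees into intersection numbers on the total spaces, to localize the comparison at the point $0\in C$, and then to recognize the resulting difference as a Donaldson--Futaki invariant of a degeneration of the special fibre, which K-(semi)stability controls from below. First I would record the intersection-theoretic formula for the CM degree of a family $\pi:(X,L)\to C$, of the schematic shape
\[
\mathrm{CM}((X,L)/C)=\frac{\bar{S}}{n+1}\,(L^{n+1})+(K_{X/C}\cdot L^{n}),
\]
up to a fixed positive normalizing constant, where $\bar{S}$ is the slope of a general fibre (common to both families since they agree over $C^\circ$); this is the incarnation of \cite[Lemma 2.5]{FR} identifying the CM degree with the relative Donaldson--Futaki data. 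Since the CM line bundle $\lambda_{\mathrm{CM}}$ is functorial and the two families are $C^\circ$-isomorphic via $f^\circ$, the bundles $\lambda_{\mathrm{CM}}(X/C)$ and $\lambda_{\mathrm{CM}}(X'/C)$ coincide over $C^\circ$, so $\mathrm{CM}((X',L')/C)-\mathrm{CM}((X,L)/C)$ is a purely local quantity at $0$, computable on the special fibres alone.

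To compute it I would pass to a common normal model: choose a normal variety $Y$ with projective birational morphisms $p:Y\to X$ and $q:Y\to X'$ extending $f^\circ$ over $C^\circ$. Then $p^*L$ and $q^*L'$ agree over $C^\circ$, so $D\coloneq p^*L-q^*L'$ is a $\mathbb{Q}$-Cartier divisor supported on $Y_0$. Pulling both formulae back to $Y$ and applying the projection formula, every term of the difference becomes an intersection of $D$ with powers of $p^*L$, $q^*L'$ and the relative canonical classes; because $D$ is supported on the central fibre these telescope into data read off from $X_0$ and $X'_0$. The crucial structural claim is that this difference equals the non-Archimedean Mabuchi energy, equivalently the Donaldson--Futaki invariant, of the degeneration of $(X_0,L_0)$ encoded by the pair of models (intrinsically, of the filtration of the section ring $R(X_0,L_0)$ induced by the valuations coming from the components of $Y_0$): the two canonical terms contribute the entropy part and the self-intersection terms the energy part.

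With that identification in hand the inequality is immediate, since normality of $Y$ makes the induced test configuration normal and K-semistability of $(X_0,L_0)$ forces its Donaldson--Futaki invariant to be nonnegative. For the equality clause I would argue contrapositively: equality of CM degrees makes this Donaldson--Futaki invariant vanish, and K-\emph{stability} then forces the induced test configuration to be trivial; retracing through $p$ and $q$ yields $D=0$ and shows that $p,q$ identify $X$ with $X'$, so that $f^\circ$ extends to an isomorphism $f:(X,L)\cong(X',L')$ over all of $C$. Strict minimization for a smooth special fibre carrying a unique cscK metric, or for a specially K-stable one, then follows by upgrading K-semistability to strict positivity of the Mabuchi functional on nontrivial degenerations, invoking \cite{CC} in the cscK case.

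I expect the main obstacle to lie precisely in this identification: converting the intersection-theoretic difference on $Y$ into the Donaldson--Futaki invariant of an \emph{honest normal} test configuration of $(X_0,L_0)$, with the correct sign and normalization, when $X'$ is only assumed normal (so $X'_0$ may be reducible or non-reduced) and when $Y\to X$ and $Y\to X'$ are genuinely higher-dimensional modifications rather than one-parameter degenerations. Controlling the several components of $Y_0$ and guaranteeing normality of the resulting degeneration---so that K-stability may be applied---is where the real difficulty sits, and is presumably what the new notion of special K-stability is engineered to resolve.
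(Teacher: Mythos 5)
Your proposal runs into a fundamental gap at the step where you convert the intersection-theoretic difference on $Y$ into ``the Donaldson--Futaki invariant of an honest normal test configuration of $(X_0,L_0)$'' and then invoke K-semistability. Note first that the statement you are proving is stated as a \emph{conjecture} in this paper and remains open in general; the paper only establishes it when $X_0$ is a cscK manifold with discrete automorphism group (Corollary \ref{cmcsck}) or when $(X_0,L_0)$ is specially K-(semi)stable (Theorem \ref{CM2}). The degeneration encoded by the pair of models carries no $\mathbb{G}_m$-action: what one actually extracts from $(X,L)$ and $(X',L')$ near $0$ is the filtration $\mathscr{F}^{-i}H^0(X_0,mL_0)=\mathrm{Im}\,(H^0(X',mL'+i(X'_0-\hat{X}_0))\to H^0(X_0,mL_0))$, and the paper's Theorem \ref{CM} does prove that the difference of CM degrees equals $\mathrm{DF}(\mathscr{F})$, so up to that point your outline matches the paper. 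But this filtration is in general \emph{not} finitely generated --- finite generation is a deep theorem of Blum--Xu \cite{BX} special to K-semistable Fano fibers --- so it does not arise from any test configuration, and K-semistability, which is defined by testing against test configurations, cannot be applied to conclude $\mathrm{DF}(\mathscr{F})\ge 0$. Normality of $Y$ does not produce a test configuration; there is simply no one-parameter group action in the picture.

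The natural repair --- approximate $\mathscr{F}$ by finitely generated truncations $\mathscr{F}_{(k)}$, which do induce test configurations --- is exactly what fails: the leading coefficient $b_0$ of the weight function passes to the limit, but the subleading coefficient $b_1$, which carries the entropy (canonical-divisor) part of the DF invariant, is not known to, so one cannot deduce $\mathrm{DF}(\mathscr{F})\ge\liminf_{k\to\infty} \mathrm{DF}(\mathscr{F}_{(k)})$. This is precisely the open conjecture on regularization of non-Archimedean entropy (\cite[Conjecture 2.5]{BJ2}, \cite[Conjecture 1.8]{Li}; see Remark \ref{szerem}). This obstruction is why the paper needs, in the cscK case, the lower bound $\mathrm{DF}(\mathscr{F})\ge\mathrm{Chow}_\infty(\mathscr{F})$ together with \cite[Proposition 11]{Sz}, and, in the specially K-stable case, a decomposition of the CM degree into a log-twisted Ding degree plus a J-degree, each minimized by a separate argument (basis-type divisors and inversion of adjunction for the Ding part; Lemma \ref{lim}, which shows nA J-functionals \emph{are} compatible with approximation, for the J part). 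You flag this identification as ``the main obstacle'' yourself, but it is not a bookkeeping issue about the components of $Y_0$ or about guaranteeing normality: it is the open problem itself, and nothing in your proposal addresses it. (The equality clause has a further gap: even granting $\mathrm{DF}(\mathscr{F})=0$, triviality does not follow formally from K-stability, since again there is no test configuration to which K-stability applies; the paper instead uses the Okounkov-body/norm argument in Corollary \ref{cmcsck}, resp.\ the strict monotonicity of the J-degree in Lemma \ref{wx}.)
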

 
Taking what we explained in the first paragraph of \S\ref{1.2*} into account, Conjecture \ref{mainconj} predicts that K-stability would be characterized not only by DF minimization but also by CM minimization. Conjecture \ref{mainconj} also predicts that if we chose $(*)$ to be the class of K-stable varieties in $\clubsuit$, then we would obtain separatedness automatically. This conjecture was indeed proved for lc K-ample and klt Calabi-Yau ($K_X\equiv0$) varieties by Wang and Xu \cite{WX} and by Odaka \cite{O2} respectively. Furthermore, for K-semistable Fano varieties, the above conjecture holds as shown by Xu \cite{X}. Thus, the results on separatedness in \S\ref{intro} except \cite{Bou2} follow from Conjecture \ref{mainconj} in special cases. In \cite{WX} and \cite{O2}, Conjecture \ref{mainconj} is proved by the Hodge index theorem and by the observation of the log discrepancy. On the other hand, the proof of Conjecture \ref{mainconj} for K-stable Fano varieties relies heavily on the result of \cite{LX}. Unfortunately, their methods can not be applied to families of more general polarized varieties directly. On the other hand, Ohno \cite{Oh} studied the opposite direction of Conjecture \ref{mainconj}. That is, he proved that if the CM degree takes a minimum then the special fiber is necessarily slope K-semistable under a certain condition. 
 
 The aim of this paper is to confirm that Conjecture \ref{mainconj} holds for many cases. Our first result is to settle Conjecture \ref{mainconj} for the following case that seems to be quite meaningful to K\"{a}hler geometry.
 \begin{thm}[$=$ {Corollary \ref{cmcsck}}]\label{mt1}
  Conjecture \ref{mainconj} holds if $X_0$ is smooth and $(X_0,L_0)$ has the discrete automorphism group and a cscK metric.
 \end{thm}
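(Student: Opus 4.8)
The plan is to deduce Theorem \ref{mt1} by converting the CM comparison into a comparison of Donaldson--Futaki-type invariants and then feeding in the analytic strength of the cscK metric. Since $(X,L)$ and $(X',L')$ are isomorphic over $C^{\circ}$, the difference $\mathrm{CM}((X',L')/C)-\mathrm{CM}((X,L)/C)$ depends only on the two ways the families fill in the curve over a neighbourhood of $0\in C$, so I would first localize: pass to the henselization (or a small disc) at $0$ and, after a base change totally ramified at $0$ followed by normalization and semistable reduction, arrange that both $(X,L)$ and $(X',L')$ are degenerations of one fixed polarized fibre. Each filling then determines a degeneration of that fibre, and via the identification of the CM degree over a curve with the DF invariant of a test configuration (cf. \cite[Lemma 2.5]{FR}) the problem becomes a comparison of DF-type invariants attached to the central fibres $X_0$ and $X'_0$.

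The analytic input is the following. Because $(X_0,L_0)$ is smooth, admits a cscK metric, and has discrete $\mathrm{Aut}(X_0,L_0)$, its Mabuchi K-energy is \emph{coercive}: this is the converse companion to the statement of \cite{CC} recalled in \S\ref{1.2*}, namely that coercivity of the K-energy together with discrete automorphisms is equivalent to the existence of a unique cscK metric. Coercivity of the K-energy is in turn equivalent to uniform K-stability of $(X_0,L_0)$, i.e. there is a constant $\delta>0$ with $\mathrm{DF}(\mathcal{X},\mathcal{L})\ge \delta\, J^{\mathrm{NA}}(\mathcal{X},\mathcal{L})$ for every nontrivial normal test configuration, where $J^{\mathrm{NA}}\ge 0$ vanishes exactly on the trivial test configuration. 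I would use this quantitative inequality rather than bare K-stability, because it is the strengthened form that survives the passage back to the global CM degree and simultaneously yields both the inequality and the rigidity clause of Conjecture \ref{mainconj}.

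To conclude, I would build a comparison test configuration out of the localized data, using a common normal model $\mathcal{Y}\to C$ dominating both $X$ and $X'$ and isomorphic to them over $C^{\circ}$, so that $\mathrm{CM}((X',L')/C)-\mathrm{CM}((X,L)/C)$ is expressed as a DF-type invariant whose sign is governed by the degeneration of the fixed fibre to $X_0$. Uniform K-stability of $(X_0,L_0)$ then forces this difference to be nonnegative, giving $\mathrm{CM}((X,L)/C)\le \mathrm{CM}((X',L')/C)$. When $X'$ is normal, equality forces the controlling $J^{\mathrm{NA}}$-term to vanish; by the strict positivity coming from $\delta>0$ this happens only for the trivial test configuration, which translates into the extension of $f^{\circ}$ to an isomorphism $f:(X,L)\cong (X',L')$ over all of $C$. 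It is precisely here that discreteness of $\mathrm{Aut}(X_0,L_0)$ is indispensable, since a positive-dimensional automorphism group would give only K-polystability and would permit nontrivial product degenerations of equal CM degree.

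The main obstacle I anticipate is bridging the local differential-geometric coercivity on the fixed fibre $(X_0,L_0)$ and the global intersection-theoretic CM degree over $C$ when the competitor $(X',L')$ is an arbitrary, possibly singular or non-normal, filling rather than a $\mathbb{G}_m$-equivariant test configuration. Concretely, three things must be controlled: first, the exceptional and central intersection numbers contributing to $K_{X/C}\cdot L^n$ on the common model $\mathcal{Y}$ must be tracked so that the DF-type difference is genuinely computed; second, the entropy term in the decomposition of the Mabuchi functional, which measures the discrepancy of the competitor's central fibre and is the source both of positivity and of its failure of lower semicontinuity for non-normal $X'$, must be handled robustly; and third, the uniform constant $\delta$ must be preserved through the base change and semistable reduction so that \emph{strict} positivity, not merely nonnegativity, survives in the limit. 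Establishing the entropy-term estimate so that the cscK hypothesis on the smooth fibre yields strict CM minimization over $C$ is the crux, and is exactly what the special K-stability formalism of Theorem \ref{spec} is designed to package.
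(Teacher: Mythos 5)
Your plan breaks at its first and most important step: the difference $\mathrm{CM}((X',L')/C)-\mathrm{CM}((X,L)/C)$ is \emph{not} in general the DF invariant of a test configuration of $(X_0,L_0)$. The two families over $C$ carry no $\mathbb{G}_m$-action, and \cite[Lemma 2.5]{FR} identifies CM degree with DF invariant only when the family itself is (the compactification of) a test configuration; no base change, normalization, or semistable reduction converts an arbitrary competitor filling $(X',L')$ into one. What the comparison of the two fillings genuinely produces is the filtration
\[
\mathscr{F}^{-i}H^0(X_0,mL_0)=\mathrm{Im}\,\bigl(H^0(X',mL'+i(X'_0-\hat{X}_0))\to H^0(X_0,mL_0)\bigr),
\]
and this filtration is in general \emph{not} finitely generated (finite generation is known only for Fano fibers, \cite{BX}). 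Your analytic chain (cscK $+$ discrete automorphisms $\Rightarrow$ coercivity $\Rightarrow$ uniform K-stability) is available, but it lands on the wrong object: the inequality $\mathrm{DF}\ge\delta(\mathcal{J}^{L})^{\mathrm{NA}}$ is quantified over test configurations and cannot be applied to a non-finitely-generated filtration. The obvious repair---approximate $\mathscr{F}$ by the finitely generated filtrations $\mathscr{F}_{(k)}$ and pass to the limit---fails on exactly the term you flag: the leading coefficient $b_0$ of the weight function is continuous along the approximation, but the subleading coefficient $b_1$ entering $\mathrm{DF}$ is not known to be; this is the open regularization conjecture for non-Archimedean entropy (\cite[Conjecture 2.5]{BJ2}, \cite[Conjecture 1.8]{Li}). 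You name this obstacle but provide no mechanism to overcome it, and the mechanism you do point to is wrong for this statement: special K-stability and Theorem \ref{spec} play no role in Theorem \ref{mt1}---a cscK polarized manifold need not be specially K-stable at all (see the ruled-surface example following Definition \ref{mmt}), and that formalism is reserved for Theorem \ref{mt2}.

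The paper's actual proof avoids uniform K-stability entirely. Theorem \ref{CM} shows by a direct cohomological computation that $\mathrm{CM}((X',L')/C)-\mathrm{CM}((X,L)/C)=\mathrm{DF}(\mathscr{F})$ exactly, for the good filtration above. Theorem \ref{ch} then proves $\mathrm{DF}(\mathscr{F})\ge\mathrm{Chow}_\infty(\mathscr{F})=\liminf_r\mathrm{Chow}_r(\mathscr{F}_{(r)})$, an inequality that needs only $b_0^{(r)}\le b_0$ (via Okounkov bodies) and the tautology $w_r(r)=w(r)$, so the troublesome $b_1$ never has to pass to the limit; nonnegativity of $\mathrm{Chow}_\infty(\mathscr{F})$, with strict positivity when $\|\mathscr{F}\|_2>0$, then comes from the theorems of Donaldson and Sz\'{e}kelyhidi on asymptotic Chow stability via balanced metrics (\cite[Corollary 4]{Dn}, \cite[Proposition 11]{Sz})---an analytic input genuinely stronger than, and different from, uniform K-stability. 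The rigidity clause is likewise not a ``$J^{\mathrm{NA}}$-term vanishes, hence trivial test configuration'' argument: in Corollary \ref{cmcsck} one shows that if the ample model of $(X',L')$ is not $(X,L)$ then $\|\mathscr{F}\|_2>0$, by proving the concave transformation $G$ on the Okounkov body cannot be constant on the interior (a constant $G$ would force $L'^{n+1}=L^{n+1}$, contradicting $L'^{n+1}<L^{n+1}$, which follows from $-E$ being $\mu$-ample and $E\neq0$). Finally, note that your preliminary base-change/semistable-reduction step is itself delicate for the equality clause, since normalized base change yields only the inequality of Lemma \ref{fomulalem}~(1), not an equality of CM degrees.
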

 
 According to this theorem, Conjecture \ref{mainconj} seems to be quite natural. Indeed, if $(X_0,L_0)$ satisfies the assumption of Theorem \ref{mt1}, it is known that $(X_0,L_0)$ is K-stable by \cite{St} and the Yau-Tian-Donaldson conjecture predicts that the converse would hold. 

 On the other hand, we introduce a new class of K-stable varieties, {\it specially }K-{\it stable varieties} (cf., Definition \ref{mmt}). K-stable log Fano, slc K-ample and klt Calabi-Yau varieties are contained in this class. Furthermore, some of polarized varieties confirmed to be uniformly K-stable in the previous works by the author (cf., \cite{Hat2}, \cite{Hat}) are also specially K-stable, for example, klt minimal models and uniformly adiabatically K-stable klt-trivial fibrations over curves (see Theorem \ref{spec}).
 
  Our second result confirms that Conjecture \ref{mainconj} also holds for specially K-stable varieties as follows.
 \begin{thm}[$=$ {Theorem \ref{CM2}}]\label{mt2}
  In Conjecture \ref{mainconj}, let $K_X$ be $\mathbb{Q}$-Cartier. If $(X_0,L_0)$ is specially K-semistable, then the following inequality holds
  \[
 \mathrm{CM}((X,L)/C)\le \mathrm{CM}((X',L')/C).
 \]
If $(X_0,L_0)$ is further specially K-stable and $X'$ is normal, then equality holds iff $f^{\circ}$ extends to $f:(X,L)\cong(X',L')$ over $C$ entirely.
 \end{thm}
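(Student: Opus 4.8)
The plan is to reduce the comparison of CM degrees to the non-negativity of a Donaldson--Futaki-type quantity governed by the special K-semistability of $(X_0,L_0)$, following the intersection-theoretic philosophy behind \cite{WX} and \cite{O2} but organizing the two families through a common model. First I would localize: since $f^\circ$ is an isomorphism over $C^\circ$, the difference $\mathrm{CM}((X',L')/C)-\mathrm{CM}((X,L)/C)$ is a sum of contributions supported over the closed point $0\in C$, so nothing is lost by passing to the DVR $R=\mathcal{O}_{C,0}$. Using a family version of the intersection formula for the CM degree (cf.\ \cite[Lemma 2.5]{FR}), I would write
\[
\mathrm{CM}((X,L)/C)=\frac{c}{L_\eta^{\,n}}\Bigl(n\mu\,L^{\,n+1}+(n+1)\,L^{\,n}\!\cdot K_{X/C}\Bigr),
\]
with $c>0$ and $\mu=\mu(X_\eta,L_\eta)$ depending only on the generic fibre, and likewise for $(X',L')$. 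The crucial point is that $\mu$ and $L_\eta^{\,n}$ are invariants of the common generic fibre, hence identical for both families, so only the total-space intersection numbers $L^{\,n+1}$ and $L^{\,n}\!\cdot K_{X/C}$ can differ.

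Next I would choose a normal model with proper birational morphisms $p\colon\mathcal{W}\to X$ and $q\colon\mathcal{W}\to X'$ resolving $f^\circ$ over $R$, and compare the pulled-back data. Writing $p^*L-q^*L'=E$ and, using that $K_X$ (hence $K_{X/C}$) is $\mathbb{Q}$-Cartier, $p^*K_{X/C}-q^*K_{X'/C}=\sum_i(a_i-a'_i)E_i$, where $E$ and all the $E_i$ are supported on the central fibre $\mathcal{W}_0$, the difference of the two CM degrees becomes a single intersection number on $\mathcal{W}$ involving only central-fibre classes. This expression should then split into two pieces of the kind that appear separately in the two known cases: a self-intersection contribution coming from $E$, which is controlled by the Hodge-index/Zariski negative-definiteness of fibre-supported divisors as in the K-ample argument of \cite{WX}, and a discrepancy contribution coming from the $a_i-a'_i$, of the type estimated by log discrepancies in the klt Calabi--Yau argument of \cite{O2}.

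The decisive step is to combine these two contributions uniformly under the single hypothesis of special K-semistability (Definition \ref{mmt}). Here I would produce, from the competitor family $(X',L')$ near $0$, a \emph{special} test configuration of $(X_0,L_0)$ whose Donaldson--Futaki invariant equals (or bounds below) the above intersection number; running a relative MMP over $R$ to replace $\mathcal{W}$ by a model on which the central fibre and the polarization acquire the prescribed special form, one arranges that this modification changes the CM difference only by a term of controlled sign. Special K-semistability then forces the Donaldson--Futaki invariant to be $\geq 0$, yielding the desired inequality. For the equality statement, when $(X_0,L_0)$ is specially K-stable, vanishing of this non-negative invariant forces the constructed test configuration to be trivial; unwinding the construction shows $E=0$ and $a_i=a'_i$ for all $i$, so $p$ and $q$ contract exactly the same divisors and $f^\circ$ extends to $f\colon(X,L)\cong(X',L')$ over $C$.

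The hard part will be precisely this bridging step. The CM degree of the competitor $(X',L')$ over a curve carries no $\mathbb{C}^*$-action, whereas special K-stability is a condition on test configurations, so the genuine obstacle is to manufacture an honest special degeneration of $(X_0,L_0)$ whose Donaldson--Futaki invariant is literally the CM-degree difference, and to keep the MMP-type modification compatible with the intersection-theoretic bookkeeping throughout. Controlling the error terms introduced by the MMP, checking that the resulting test configuration is admissible (so that the stability hypothesis applies at all), and ensuring that the rigidity needed for the equality case survives the passage to the model together constitute the main difficulty; this is exactly the gap that the notion of special K-stability is designed to close, unifying the Hodge-index and discrepancy mechanisms that were hitherto available only in the K-ample and Calabi--Yau cases respectively.
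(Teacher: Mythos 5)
Your preparatory steps (localizing at $0\in C$, the intersection-theoretic formula for the CM degree, a common model on which $p^*L-q^*L'$ and the difference of relative canonical classes are supported on the central fibre) are unobjectionable, but the proof has a genuine gap exactly where you flag ``the hard part,'' and the mechanism you propose to close it is one the paper explicitly rules out. First, special K-(semi)stability here (Definition \ref{mmt}) is \emph{not} a condition tested on special test configurations in the Li--Xu sense: it is the conjunction of a $\delta$-invariant bound with uniform $\mathrm{J}^{K_{X}+\Delta+(\delta-\epsilon)L}$-stability and positivity of $K_X+\Delta+\delta L$. So even if you manufactured a special degeneration of $(X_0,L_0)$ whose DF invariant were the CM-degree difference, the hypothesis would not plug in. Second, your plan to produce that degeneration by ``running a relative MMP over $R$'' is precisely the Li--Xu/Blum--Liu--Zhou strategy that the paper says cannot work in this generality: for a general polarization the twist $H=-(K_X+\Delta+L)$ can be anti-ample, so there is no MMP with scaling producing the required special model. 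Moreover, the object that naturally encodes the CM-degree difference is not a test configuration at all but a filtration $\mathscr{F}$ of $(X_0,L_0)$ (Theorem \ref{CM}) which need not be finitely generated; approximating it by finitely generated filtrations preserves the leading coefficient $b_0$ of its weight function but is not known to preserve $b_1$ --- this is the open problem of regularization of non-Archimedean entropy (Remark \ref{szerem}) --- so ``$\mathrm{DF}\ge 0$ by semistability'' cannot simply be quoted for it. Your proposed splitting into a Hodge-index piece plus a discrepancy piece likewise fails for a general $L$: the Wang--Xu and Odaka mechanisms apply only when $L$ is proportional to $K_X+\Delta$, which is exactly why the introduction states those methods do not extend.

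What the paper actually does, and what is missing from your sketch, is a decomposition adapted to Definition \ref{mmt}. After rescaling $L$ so that $\delta(X_0,\Delta_0,L_0)\ge 1$, $K_{X_0}+\Delta_0+L_0$ is ample and $(X_0,L_0)$ is uniformly $\mathrm{J}^{K_{X_0}+\Delta_0+L_0}$-stable, one sets $H=-(K_X+\Delta+L)$ and writes $\mathrm{CM}=\mathrm{CM}_{(\Delta,H)}-\mathcal{J}^{H}$. On the $X$ side this equals $\mathrm{Ding}_{(\Delta,H)}(X,L)+\mathcal{J}^{K_X+\Delta+L}(X,L)$ by Lemma \ref{ding.lemma} (3), while on the $X'$ side one only gets the inequality $\mathrm{CM}((X',\Delta',L')/C)\ge \mathrm{Ding}_{(\Delta',H)}(X',L')+\mathcal{J}^{K_X+\Delta+L}(X',L')$. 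The theorem then reduces to two separate minimization statements, each of which sidesteps the regularization problem: Ding minimization (Proposition \ref{ding}), proved via the filtration of Theorem \ref{CM}, basis-type divisors with $\delta_{r_0k}>1$, and inversion of adjunction --- no MMP anywhere; and J minimization (Proposition \ref{jcm2}), proved because the non-Archimedean J-functional, unlike the DF invariant, \emph{is} compatible with finitely generated approximations of the filtration (Lemma \ref{lim}). The equality case comes from uniform J-stability via Lemma \ref{wx}, giving $\mu^*L\sim_{\mathbb{Q},C}L'$, rather than from rigidity of a test configuration. Until your bridging step is replaced by something of this kind, the proposal does not constitute a proof.
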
 
 
 We obtain that $\clubsuit$ holds for special K-stable varieties as an immediate corollary ($=$ Corollary \ref{sep}).
 
We can define special K-stability in an intrinsic way by using the $\delta$-invariant (cf., \cite{FO}, \cite{BlJ}) and J-positivity (cf., \cite{G}, \cite{DP}, \cite[Definition 1.1]{S}, \cite{Hat2} and Definition \ref{mmt}) rather than by using the DF invariants of test configurations. Thus, to check special K-stability is much easier than the original K-stability. Furthermore, thanks to Corollary \ref{sep}, we could construct moduli spaces of certain classes of specially K-stable polarized varieties as Deligne-Mumford stacks if we knew openness and boundedness. In fact, the moduli spaces constructed by Hashizume and the author in \cite{HH} parametrize uniformly adiabatically K-stable klt-trivial fibrations over curves and Theorems \ref{mt2} and \ref{spec} guarantee separatedness of these moduli spaces in a different way. Furthermore, we conclude that the result of \cite{Bou2} follows from Theorem \ref{mt2}.
 
\subsection{The technical heart of the proof of the main theorems}\label{heart} Let $\pi:(X,L)\to C$ and $\pi':(X',L')\to C$ be two families generically isomorphic over $C$. As in Conjecture \ref{mainconj}, let $0\in C$ be a special point. We consider when $X_0$ is normal and irreducible and restrict $C$ to an open neighborhood of $0$. Then we define the following filtration as
 \[
 \mathscr{F}^{-i}H^0(X_0,mL_0)=\mathrm{Im}\,(H^0(X',mL'+i(X'_0-\hat{X}_0))\to H^0(X_0,mL_0))
 \]
 for $i\ge0$. Otherwise, we define $ \mathscr{F}^{-i}H^0(X_0,mL_0)=0$. Here, $\hat{X}_0$ is the strict transformation of $X_0$. This filtration is firstly studied in \cite[\S5]{BX} when $(X_0,L_0)$ and $(X'_0,L'_0)$ are K-semistable Fano varieties, and Blum and Xu proved that $\mathscr{F}$ is finitely generated in this case. However, this filtration has not been fully considered yet in general cases. In this paper, we construct such filtrations in general settings. In contrast to the case treated in \cite[\S 5]{BX}, $ \mathscr{F}$ might not be finitely generated. However, we show that $\mathscr{F}$ has the weight function $w_{\mathscr{F}}(m)=b_0m^{n+1}+b_1m^{n}+O(m^{n-1})$ where $n=\textrm{dim}\,X$. Thus, we define the DF invariant $\mathrm{DF}(\mathscr{F})$ of $\mathscr{F}$ in the same way we defined those of test configurations. To prove our conjecture, we observe that the difference of the CM degrees coincides with $\mathrm{DF}(\mathscr{F})$. Thus, we want to compute $\mathrm{DF}(\mathscr{F})$ by approximating via finitely generated ones. However, there is a subtlety that $b_0$ is preserved when we take the limit but $b_1$ is not known to be so. This problem is called the conjecture of regularization of non-Archimedean entropy (cf., \cite[Conjecture 2.5]{BJ2}, \cite[Conjecture 1.8]{Li}). Fortunately, if $(X_0,L_0)$ is smooth and has a cscK metric and the discrete automorphism group, then we take a lower bound as Chow$_\infty(\mathscr{F})$ of $\mathrm{DF}(\mathscr{F})$ by \cite[Proposition 11]{Sz}. See the details in Theorem \ref{CM} and Corollary \ref{cmcsck}. On the other hand, for specially K-stable varieties, we know by \cite[Appendix]{Hat} that we can give a lower bound of the DF invariant of a test configuration as the sum of the log-twisted Ding invariant (cf., \cite{Ber}) and the non-Archimedean (for short., nA) J-functional introduced by \cite{LS}. For Ding invariants, as studied in \cite{Fjt2} and \cite{Fjt}, filtrations play important roles. On the other hand, we see that nA J-functionals are compatible with taking the limit of finitely generated filtrations (cf., Lemma \ref{lim}). Then, we decompose CM degrees into log-twisted Ding degrees and J-degrees, which are generalizations of nA J-functionals, and we obtain a lower bound of the difference of two CM degrees.
 \subsection{Structure of this paper} In \S\ref{2}, we introduce {\it good filtrations}. A good filtration is defined to be a filtration with the weight function that is close to a polynomial with an error term $O(m^{n-1})$. 
  We define the DF invariants of these filtrations in a different way from \cite{Sz} (cf., Definition \ref{goodfil}, Remark \ref{szerem}). On the other hand, we have to consider the volumes of linear series on reducible or non-reduced schemes. There is a powerful tool, the Okounkov body (cf., \cite{LM}, \cite{BC}), to discuss the volumes of linear series of varieties. However, the theory of Okounkov bodies might not work well for reducible or non-reduced schemes. For this, we work on the weight functions of filtered linear series of general schemes.
 
 In \S\ref{3}, we first establish the formula as we stated in \S\ref{heart}. We also establish the log version of this formula in Corollary \ref{logCm}. Here, note that $\Delta_0$, the fiber of the boundary over $0$, might not be integral in general. Then, we apply the theory on the weight functions of filtrations of reducible or non-reduced polarized schemes constructed in \S\ref{nonreduced} to obtain our formulae. Theorem \ref{mt1} follows from Theorem \ref{CM} and from the result on Chow$_\infty$ of \cite{Sz}.
 
 The proof of Theorem \ref{mt2} is more complicated than that of Theorem \ref{mt1}. In \S\ref{mtp2}, we prove Theorem \ref{mt2} in the three steps. We first decompose the CM degree into the log-twisted Ding degree and the J-degree of a family.
 
  In \S\ref{1st}, we consider ``J-minimization". As studied in \cite{Hat2}, nA J-functionals are not affected by singularities. This is a difference between J-stability and K-stability in the sense of \cite{Gitod}. For this, no problem like regularization of nA entropy occurs when we consider nA J-functionals. Thus we define the nA J-functional of a non finitely generated filtration by taking the limit of a sequence of those of finitely generated filtrations (cf., Definition \ref{compatible.divisor}, Lemma \ref{lim}). With this in mind, we prove J-minimization (Proposition \ref{jcm2}) by applying Corollary \ref{logCm}.  
  
  Next, we consider ``Ding minimization" in \S\ref{2nd}. For this, we construct the following new method to prove the implication $\delta(X,-K_X)\ge1\Rightarrow$ Ding-semistability of Fano varieties more directly than \cite{FO} (cf., \cite[Theorem 5.1]{Fjt}) without applying the result of \cite{LX}. The reason why we need the new method is that there is a subtle problem that we can not make use of MMP directly for general log twisted Fano pairs as \cite{LX} or \cite{BLZ} since the twist term can be anti-ample. Let us explain the method briefly for test configurtaions. Let $(\mathcal{X},\mathcal{L})$ be a semiample test configuration for a Fano manifold $(X,-K_X)$ and $\mathfrak{a}=\sum t^i\mathfrak{a}_i$ be an ideal (which is called a flag ideal in \cite{O3}) such that $(\mathcal{X},\mathcal{L})$ is the blow up of $\mathfrak{a}$. Here $t$ is the canonical coordinate of $\mathbb{A}^1$. As in the proof of \cite[Theorem 4.1]{Fjt}, we have
  \[
  \mathrm{Ding}(\mathcal{X},\mathcal{L})=\mathrm{lct}(X\times\mathbb{A}^1,\mathfrak{a};X\times\{0\})-1-\frac{\mathcal{L}^{n+1}}{(n+1)L^n}.
  \]
  Then we relate $\frac{\mathcal{L}^{n+1}}{(n+1)L^n}$ to the asymptotic behavior of the $\delta_k$-invariant for sufficiently large $k$. Thus, we deduce Ding-semistability from $\delta(X,-K_X)\ge1$. To show Ding minimization, we generalize our method to any family over a curve in the log-twisted setting.
  
  Finally, we combine the results on J-minimization and Ding minimization to obtain Theorem \ref{mt2} and explain applications in \S\ref{3.3.3}.
  
\section*{Acknowledgement}
The author would like to thank his advisor Professor Yuji Odaka for his constant support and warm encouragement. The author also would like to thank Odaka and Doctor Eiji Inoue for careful reading the draft and fruitful discussions. This work was supported by JSPS KAKENHI Grant Number JP22J20059.
\section{Preliminaries}\label{2}

We assume that a polarized scheme $(X,L)$ is proper over $\mathbb{C}$, connected and equidimensional of $n$. If we call $X$ is a variety, we further assume that $X$ is irreducible and reduced. Unless otherwise stated, we understand $L$ to be a $\mathbb{Q}$-line bundle, i.e.,  $rL$ is an ample line bundle for some $r\in\mathbb{Z}_{\ge0}$. We denote the intersection product as $L^m\cdot H^{n-m}$ and we understand $mL=L^{\otimes m}$.

\subsection{K-stability and test configurations}\label{defkst}
First, we recall some basic concepts. 

\begin{de}
Let $(X,L)$ be a polarized reduced scheme that satisfies the Serre's condition $S_2$. Suppose that $X$ is smooth or normal crossing in codimension 1 points. Let $\Delta$ be an effective $\mathbb{Q}$-Weil divisor on $X$ such that no irreducible component of $\mathrm{Supp}\, \Delta$ is contained in the singular locus of $X$ and $K_X+\Delta$ is $\mathbb{Q}$-Cartier. Then we call $(X,\Delta,L)$ a {\it polarized deminomal pair}.

If $X$ is further a normal variety, then we call $(X,\Delta,L)$ a polarized normal pair.
\end{de}
We recall log discrepancies, $\delta$-invariants and singularities of pairs as follows.
\begin{de}\label{desing}
First, let $(X,\Delta,L)$ be a polarized normal pair. For any prime divisor $F$ over $X$, we define the {\it log discrepancy} $A_{(X,\Delta)}(F)$ with respect to $F$ as follows. Choose a projective birational morphism $\pi:Y\to X$ from a normal variety $Y$ on which $F$ is a prime divisor defined. Then
\[
A_{(X,\Delta)}(F)=1+\mathrm{ord}_F(K_Y-\pi^*(K_X+\Delta)).
\]
This is independent from the choice of $\pi$. Then we say $(X,\Delta)$ is 
\begin{itemize}
\item {\it klt} if $A_{(X,\Delta)}(F)>0$ for any $F$,
\item {\it lc} if $A_{(X,\Delta)}(F)\ge0$ for any $F$.
\end{itemize}
We remark that if $\Delta$ is noneffective, we define the log discrepancy in the same way and we say $(X,\Delta)$ is sublc if $A_{(X,\Delta)}(F)\ge0$ for any $F$.

For any effective Cartier divisor $D$, we define the {\it log canonical threshold} of $(X,\Delta)$ with respect to $D$
\[
\mathrm{lct}(X,\Delta;D)=\sup\{t\in\mathbb{Q}|(X,\Delta+tD)\textrm{ is a sublc pair}\}.
\]

Next, we define the $\delta$-invariant of a polarized lc pair $(X,\Delta,L)$ as follows. If $(X,\Delta)$ is not klt, then set $\delta(X,\Delta,L)=0$. Otherwise, take $r_0\in\mathbb{Z}_{>0}$ such that $r_0L$ is an ample Cartier divisor. For $m\in\mathbb{Z}_{>0}$, we call $D$ an $r_0m$-basis type divisor if $D=\frac{1}{r_0mh^0(X,r_0mL)}\sum_{i=1}^{h^0(X,r_0mL)}D_i$ where $\{D_i\}_{i=1}^{h^0(X,r_0mL)}$ forms a basis of $H^0(X,r_0mL)$. Then, set
\[
\delta_{r_0m}(X,\Delta,L)=\inf_{D:r_0m\textrm{-basis}}\mathrm{lct}(X,\Delta;D).
\]
It is known by \cite[Theorem A]{BlJ} that $\lim_{m\to\infty}\delta_{r_0m}(X,\Delta,L)$ exists and we call this the $\delta$-invariant of $(X,\Delta,L)$ denoted by $\delta(X,\Delta,L)$.

On the other hand, let $(X,\Delta,L)$ be a polarized deminormal pair. Let $\nu:\overline{X}\to X$ be the normalization and let $\mathfrak{cond}_{\overline{X}}\subset\overline{X}$ be the conductor subscheme defined by the ideal $\mathcal{H}om_{\mathcal{O}_X}(\nu_*\mathcal{O}_{\overline{X}},\mathcal{O}_X)$. Then $\mathfrak{cond}_{\overline{X}}$ is known to be a reduced Weil divisor \cite[\S5.1]{Ko}. We say $(X,\Delta)$ is {\it slc} if $(\overline{X},\nu_*^{-1}\Delta+\mathfrak{cond}_{\overline{X}})$ is lc and set $\delta(X,\Delta,L)=0$ for non normal polarized slc pairs. 
\end{de}

If $\Delta=\sum_{i=1}^r a_i D_i$ is a $\mathbb{Q}$-divisor where each $D_i$ is an irreducible component of $\Delta$, set 
\[
\chi(\Delta,mH|_{\Delta})=\sum_{i=1}^r a_i \chi(\Delta,mH|_{D_i})
\]
for any line bundle $H$ on $X$ since we are only interested in the leading term of $\chi(\Delta,mH|_{\Delta})$. 

\begin{de}\label{defkst23}
Let $(X,\Delta,L)$ be a deminormal polarized pair. A pair $(\mathcal{X},\mathcal{L})$ is called a {\it semiample test configuration} for $(X,L)$ if the following conditions hold.
\begin{enumerate}
\item $\mathcal{X}$ is a scheme and $\mathcal{L}$ is a semiample $\mathbb{Q}$-line bundle on $\mathcal{X}$ such that $\mathbb{G}_m$ acts on $(\mathcal{X},\mathcal{L})$ in the sense of \cite[\S1.4]{GIT}, 
\item There exists a projective, flat, and $\mathbb{G}_m$-equivariant morphism $\pi:\mathcal{X}\to\mathbb{A}^1$, where $\mathbb{A}^1$ admits a natural $\mathbb{G}_m$-action by multiplication,
\item $(\pi^{-1}(1),\mathcal{L}|_{\pi^{-1}(1)})\cong (X,L)$.
\end{enumerate}
If $\mathcal{L}$ is ample, then we call $(\mathcal{X},\mathcal{L})$ an ample test configuration. For any semiample test configuration $(\mathcal{X},\mathcal{L})$, we get the canonical compactification over $\mathbb{P}^1$ denoted by $(\overline{\mathcal{X}},\overline{\mathcal{L}})$, whose restriction to $\mathbb{P}^1\setminus0$ coincides with $(X\times\mathbb{A}^1,L\times\mathbb{A}^1)$ such that $\mathbb{G}_m$ trivially acts on the first component $X$.

We denote a test configuration $X\times \mathbb{A}^1$, with the trivial $\mathbb{G}_m$-action on the first component, by $X_{\mathbb{A}^1}$. For any ample test configuration $(\mathcal{X},\mathcal{L})$, there exists another semiample test configuration $(\mathcal{Y},\gamma^*\mathcal{L})$ such that there exist two $\mathbb{G}_m$-equivariant morphisms $\gamma:\mathcal{Y}\to\mathcal{X}$ and $\rho:\mathcal{Y}\to X_{\mathbb{A}^1}$. Here, $\gamma$ and $\rho$ induce the identity morphism over $\mathbb{A}^1\setminus 0$. In this case, we say $\mathcal{Y}$ dominates $X_{\mathbb{A}^1}$. We may also assume that $\mathcal{Y}$ is deminormal by \cite[Proposition 3.2]{Fu} and \cite{O3}. Let $\mathcal{D}$ be the closure of $\Delta\times(\mathbb{A}^1\setminus 0)$ in $\overline{\mathcal{X}}$. On the other hand, let $H$ be a $\mathbb{Q}$-line bundle on $X$. Then we define the following functional on $(\mathcal{X},\mathcal{L})$ after \cite{LS}
\begin{align*}
(\mathcal{J}^H)^{\mathrm{NA}}(\mathcal{X},\mathcal{L})=(L^n)^{-1}\left(\overline{\rho^*(H\times\mathbb{A}^1)}\cdot \overline{\gamma^*\mathcal{L}}^n-\frac{nH\cdot L^{n-1}}{(n+1)L^n}\overline{\mathcal{L}}^{n+1}\right).
\end{align*}
We call this the {\it non-Archimedean (nA) J$^H$-functional} of $(X,L)$. It is easy to check that $(\mathcal{J}^H)^{\mathrm{NA}}(\mathcal{X},\mathcal{L})$ does not depend on the choice of $\gamma$. If $\mathcal{X}$ is deminormal, we also define
\[
\mathrm{DF}_{\Delta}(\mathcal{X},\mathcal{L})=(L^n)^{-1}\left((K_{\overline{\mathcal{X}}/\mathbb{P}^1}+\mathcal{D})\cdot \overline{\mathcal{L}}^n-\frac{n(K_X+\Delta)\cdot L^{n-1}}{(n+1)L^n}\overline{\mathcal{L}}^{n+1}\right).
\] 
We call this the (log) {\it Donaldson-Futaki (DF) invariant} of $(\mathcal{X},\mathcal{L})$ (cf., \cite{O}, \cite{W}). 

We define $(X,\Delta,L)$ is
\begin{itemize}
\item uniformly K-stable (resp., K-semistable) if there exists a rational constant $\epsilon>0$ (resp., $\epsilon=0$) such that
\[
\mathrm{DF}_\Delta(\mathcal{X},\mathcal{L})\ge (\mathcal{J}^{\epsilon L})^{\mathrm{NA}}(\mathcal{X},\mathcal{L})
\]
\item uniformly J$^H$-stable (resp., J$^H$-semistable) if there exists a rational constant $\epsilon>0$ (resp., $\epsilon=0$) such that
\[
 (\mathcal{J}^{H})^{\mathrm{NA}}(\mathcal{X},\mathcal{L})\ge (\mathcal{J}^{\epsilon L})^{\mathrm{NA}}(\mathcal{X},\mathcal{L})
\]

\end{itemize}
for any ample deminormal test configuration $(\mathcal{X},\mathcal{L})$ (cf., \cite{BHJ}, \cite{Hat2}). Here, $ (\mathcal{J}^{L})^{\mathrm{NA}}\ge0$ is nothing but the $I^{\mathrm{NA}}-J^{\mathrm{NA}}$-norm in \cite[\S7]{BHJ} or the minimum norm in \cite{De2}. See Lemma \ref{wx} for the proof of the fact that $ (\mathcal{J}^{L})^{\mathrm{NA}}$ is indeed a norm in some sense.
\end{de}

To consider K-stability of polarized deminormal pairs, we may restrict to slc pairs by \cite{Gitod}, \cite[Theorem 6.1]{O4}, \cite[\S9]{BHJ}. 

\subsection{Filtrations and DF invariant}
We assume that for any polarized scheme $(X,L)$, $L$ is $\mathbb{Z}$-Cartier throughout this section.

 \begin{de}\label{fund.def}
 Let $R=\bigoplus_{m\in\mathbb{Z}_{\ge0}} R_m$ be a graded algebra over $\mathbb{C}$ with a unit element $1$.  $\mathscr{F}=\mathscr{F}^{\bullet}R$ is called a {\it linearly bounded multiplicative $\mathbb{Z}$-filtration} of $R$ if $\mathscr{F}$ satisfies the following.
 \begin{enumerate}
 \item For $\lambda>\lambda'\in\mathbb{Z}$ and $m\in\mathbb{Z}$, $\mathscr{F}^{\lambda}R_m\subset\mathscr{F}^{\lambda'}R_m$,
 \item $\mathscr{F}^{\lambda}R_m\cdot \mathscr{F}^{\lambda'}R_{m'}\subset\mathscr{F}^{\lambda+\lambda'}R_{m+m'}$ for $\lambda,\lambda'\in\mathbb{Z}$ and $m,m'\in\mathbb{Z}_{\ge0}$,
 \item There exists a positive constant $C$ such that for sufficiently large $m\in\mathbb{Z}_{\ge0}$, $\mathscr{F}^{\lambda}R_m=0$ for $\lambda\ge Cm$ and $\mathscr{F}^{\lambda}R_m=R_m$ for $\lambda< -Cm$,
 \item $1\in\mathscr{F}^\lambda R_0$ for any $\lambda$.
 \end{enumerate}
In this paper, we call $\mathscr{F}$ a filtration for simplicity. Moreover, if $\bigoplus_{m\in\mathbb{Z}_{\ge0},\lambda\in\mathbb{Z}}\mathscr{F}^{\lambda}R_m$ forms a finitely generated bigraded $\mathbb{C}$-algebra, then we say $\mathscr{F}$ is finitely generated. 
 
Suppose that $\mathrm{dim}\, R_m$ is finite. We define $w_{\mathscr{F}}(m)$ the {\it weight function} of $\mathscr{F}$ as
 \[
 w_{\mathscr{F}}(m)=\sum_{\lambda\in\mathbb{Z}}\lambda\,\mathrm{dim}\,(\mathscr{F}^{\lambda}R_m/\mathscr{F}^{\lambda+1}R_m).
 \] 
  Let $N\in\mathbb{Z}$. The {\it weight $N$-shift} $ \mathscr{F}_{(N\textrm{-shift})}$ of $\mathscr{F}$ is a filtration defined by
 \[
 \mathscr{F}_{(N\textrm{-shift})}^{\lambda}R_k= \mathscr{F}^{\lambda+Nk}R_k.
 \]
 This indeed satisfies the conditions of Definition \ref{fund.def}. 
 
 For $x\in\mathbb{R}$, we define $\mathscr{F}^{x}R_m=\mathscr{F}^{\lceil x\rceil}R_m$ and let $R^{(x)}=\bigoplus_{m\ge0}R^{(x)}_m=\bigoplus_{m\ge0}\mathscr{F}^{mx}R_m$. Then $R^{(x)}$ is a graded subalgebra of $R$ and it holds that
  \begin{equation}\label{eqint}
 w_{\mathscr{F}}(m)=\int^{\infty}_{-Cm}\mathrm{dim}\, \mathscr{F}^{\lambda}R_md\lambda-Cm\mathrm{dim}\, R_m
 \end{equation}
 if $m$ and $C$ satisfy the condition (3) above (see \cite[\S5]{BHJ} or \cite[Prop. 2.12 (2)]{Fjt}).
 \end{de}
 \begin{ex}
 There exists the trivial filtration $\mathscr{F}_{\mathrm{triv}}$, which is defined by $\mathscr{F}_{\mathrm{triv}}^\lambda R_k=R_k$ if $\lambda\le0$ or $k=0$, otherwise $\mathscr{F}_{\mathrm{triv}}^\lambda R_k=0$.
 \end{ex}


The following is the most important case in this paper. If $R$ is a graded subalgebra of $\bigoplus_{m\ge0} H^0(X,mL)$ for some polarized scheme $(X,L)$, then we call $R$ a {\it linear series} of $(X,L)$. We define the {\it volume} of $R$ as
 \[
 \mathrm{vol}(R)=n!\limsup_{m\to\infty}\frac{\mathrm{dim}\,R_m}{m^n}.
 \]
 If $\mathscr{F}$ is further a filtration of $R=\bigoplus_{m\ge0} H^0(X,mL)$, then we call $\mathscr{F}$ a {\it filtration} of $(X,L)$.   
 \begin{ex}\label{dresf}
 Let $(X,L)$ be a polarized scheme and $D$ be a closed subscheme. If $\mathscr{F}$ is a filtration of $R=\bigoplus_{m\ge0}H^0(X,mL)$, then we define a new filtration $\mathscr{F}_D$ of $\bigoplus_{m\ge0}H^0(D,mL|_D)$ by
  \[
\mathscr{F}_D^{\lambda}H^0(D,mL|_D)=\mathrm{Im}\left(\mathscr{F}^{\lambda}H^0(X,mL)\to H^0(D,mL|_D)\right).
 \] We call $\mathscr{F}_D$ the {\it restriction} of $\mathscr{F}$ to $D$ or the {\it induced} filtration. We also denote $R^{(\lambda)}|_{D}=\bigoplus_{m\ge0}(R^{(\lambda)}|_{D})_m=\bigoplus_{m\ge0}\mathscr{F}_D^{\lambda}H^0(D,mL|_D)$. It is easy to see that $\mathscr{F}_D$ is linearly bounded and multiplicative by the Serre vanishing theorem.
 \end{ex}
 
  \subsubsection{Volumes of linear series on varieties}\label{vol}
  
  We recall the results on volumes of linear series of varieties (cf., \cite{BC}). Let $(X,L)$ be an {\bf integral} polarized variety and $R$ be a linear series of $(X,L)$. We say $R$ {\it contains an ample series} if the following hold.
  \begin{enumerate}
  \item $R_k\ne0$ for sufficiently large $k>0$,
  \item There exist an integer $m\in\mathbb{Z}_{>0}$ and an ample line bundle $A_m$ such that $mL-A_m$ is effective and 
  \[
  H^0(X,A_m)\subset R_m\subset H^0(X,mL).
  \]
  \end{enumerate}
If (1) and (2) hold for some $m$, it is known that then (2) hold for any $m>0$ (see \cite[2.10]{LM}). 

Recall the definition of the Okounkov body of $R$ in \cite{LM} and \cite[\S1]{BC}. Suppose that $R$ contains an ample series. Fix a smooth closed point $x\in X$ and a regular system of parameters $(z_1,\cdots,z_n)$ at $x$. Let $\mathrm{ord}_x$ be the canonical valuation of rational rank $n$ defined by the lexicographic order with respect to $(z_1,\cdots,z_n)$. We have the following map with the image whose cardinality is $\mathrm{dim}\, R_m$
\[
\frac{1}{m}\mathrm{ord}_x:R_m\setminus0\to\mathbb{Q}^n_{\ge0}.
\]
Let $\Delta_R$ be the closed convex hull of $\bigcup_{m\ge0}\frac{1}{m}\mathrm{ord}_x(R_m\setminus0)$ and we call $\Delta_R$ the {\it Okounkov body} of $R$. It is known that $\Delta_R$ is bounded and if $\rho$ is the Lebesgue measure on $\mathbb{R}^n$, then $\mathrm{vol}(\Delta_R)=\int_{\Delta_R}d\rho=\frac{1}{n!}\mathrm{vol}(R)$ by \cite[1.12]{B1}. It follows from this fact that $\lim_{m\to\infty}\frac{\mathrm{dim}R_m}{m^n}$ exists. 


  Next, we consider when $R=\bigoplus_{m\ge0}H^0(X,mL)$ and $\mathscr{F}$ is a filtration of $R$. Set $$e_{\max}(R,\mathscr{F})=\limsup_{k\to \infty}\frac{\sup\{t\in\mathbb{R}|\mathscr{F}^tR_k\ne0\}}{k}$$ and then $R^{(t)}$ contains an ample series for $t<e_{\max}(R,\mathscr{F})$ by \cite[Lemma 1.6]{BC}. On the other hand, it is easy to see that $\mathrm{vol}(R^{(t)})=0$ for $t>e_{\max}(R,\mathscr{F})$. Then set $\Delta_t=\Delta_{R^{(t)}}$ for $t<e_{\max}(R,\mathscr{F})$. For $t<s<e_{\max}(R,\mathscr{F})$, we have $\Delta_s\subset\Delta_t$. Set $\Delta=\Delta_R$. Then we define the {\it concave transformation} $G:\Delta\to\mathbb{R}$ {\it associated with} $\mathscr{F}$ by $$G(p)=\sup\{t\in\mathbb{R}|p\in\Delta_t\}$$ for $p\in\Delta$. It is easy to see that $G$ is concave and upper semicontinuous (cf., \cite{BlJ}). 
   \begin{rem}\label{before}
 Our notation of linearly bounded multiplicative $\mathbb{Z}$-filtrations is different from one of Sz\'{e}kelyhidi \cite{Sz} in sign. 
  
 \end{rem}
 
 Let $w$ be the weight function of $\mathscr{F}$ and take a constant $C$ such that $R_m^{(-C)}=R_m$ for sufficiently large $m\in\mathbb{Z}_{>0}$. Then it follows from 
the equation (\ref{eqint})
for such $m$ and from the dominated convergence theorem that 
\begin{equation}\label{eqlimit}
n! \lim_{m\to\infty}\frac{w(m)}{m^{n+1}}=\int^{\infty}_{-C}\mathrm{vol}R^{(x)}dx-C\mathrm{vol}(R).
\end{equation}
 On the other hand, it follows from \cite[Theorem 1.11]{BC} that
 \begin{equation}\label{eqbary}
  \lim_{m\to\infty}\frac{w(m)}{m^{n+1}}=\int_{\Delta}Gd\rho.
 \end{equation}
In \cite[\S5]{BHJ}, $\frac{1}{\mathrm{vol}(\Delta)}\left(\int_\Delta Gd\rho\right)$ is called the {\it barycenter} of (the Duistermaat-Heckman measure associated with) $\mathscr{F}$. Taking Remark \ref{before} into account, we define the norm of $\mathscr{F}$ as \cite{Sz}
 \[
\|\mathscr{F}\|_2=\sqrt{\int_\Delta G^2d\rho-\frac{1}{\mathrm{vol}(\Delta)}\left(\int_\Delta Gd\rho\right)^2}.
\]
   \subsubsection{The weights of filtrations of polarized reducible or non-reduced schemes}\label{nonreduced}
 As we saw in \S\ref{vol}, the theory of Okounkov bodies is useful to calculate the volumes of linear series of varieties but we can not apply this to general polarized schemes directly. In \S\ref{3}, we consider families over curves and compare the intersection numbers with the weight functions of filtrations of central fibers. However, such fibers may be reducible or non-reduced in general. In this subsection, we discuss the weight functions of filtered linear series of reducible or non-reduced schemes.
 
Let $(X,L)$ be a polarized scheme, $R_m=H^0(X,mL)$ and $\mathscr{F}$ be a linearly bounded multiplicative filtration of $R=\bigoplus_{m\ge0} R_m$. Fix constants $C>0$ and $m_0\in\mathbb{Z}_{>0}$ such that $R_{m}^{(-C)}=R_{m}$ for $m\ge m_0$. 
 As \S \ref{vol}, we define the {\it barycenter} of $\mathscr{F}$ to be
\[
B_{\mathscr{F}}=\limsup_{m\to\infty}\frac{w_{\mathscr{F}}(m)}{m^{n+1}}.
\]
 We use notations as in Example \ref{dresf} and show the following to deduce Lemma \ref{cl3} below.
\begin{prop}\label{restr}
 Let $\{X_i\}_{i=1}^r$ be the set of irreducible components of $X$. We define the scheme structure of $X_i$ by the image structure of its dense open subset. Let $m_i$ be the multiplicity of $X_i$, i.e., the length of $\mathcal{O}_{X,\eta_i}$ where $\eta_i$ is the generic point of $X_i$. Let also $X_{i,\mathrm{red}}$ be the reduced structure of $X_i$ and $\mathscr{F}_{i,\mathrm{red}}=\mathscr{F}_{X_{i,\mathrm{red}}}$.

Then $B_{\mathscr{F}}\ge \sum_{i=1}^rm_iB_{\mathscr{F}_{i,\mathrm{red}}}$.
\end{prop}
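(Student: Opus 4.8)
The plan is to reduce everything to the weight functions via the integral formula (\ref{eqint}) and then to produce a lower bound for $\dim\mathscr{F}^{\lambda}R_m$ by exploiting the multiplicativity of $\mathscr{F}$ together with a prime (dévissage) filtration of $\mathcal{O}_X$ along its components. Fix a common constant $C$ with $R^{(-C)}_m=R_m$ for $m\gg0$, and such that the restriction $R_m\to H^0(X_{i,\mathrm{red}},mL|_{X_{i,\mathrm{red}}})$ is surjective for all $i$ and $m\gg0$; the latter follows from Serre vanishing applied to the ideal sheaf of $X_{i,\mathrm{red}}$. First I would record the asymptotics of the unfiltered dimensions: choosing a dévissage of $\mathcal{O}_X$ whose graded pieces are structure sheaves $\mathcal{O}_Z$ of integral subschemes $Z$, with $Z=X_{i,\mathrm{red}}$ occurring exactly $m_i$ times and the remaining $Z$ of dimension $<n$, one obtains $\dim R_m=\sum_i m_i\dim H^0(X_{i,\mathrm{red}},mL|_{X_{i,\mathrm{red}}})+O(m^{n-1})$ after twisting by $mL$ and applying Serre vanishing. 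Hence the normalisation terms $-Cm\dim R_m$ match $-Cm\sum_i m_i\dim H^0(X_{i,\mathrm{red}},mL|_{X_{i,\mathrm{red}}})$ up to $O(m^{n})$, which is negligible after dividing by $m^{n+1}$. By (\ref{eqint}) it therefore suffices to prove
\[
\int_{-Cm}^{\infty}\dim\mathscr{F}^{\lambda}R_m\,d\lambda\ \ge\ \sum_{i=1}^{r}m_i\int_{-Cm}^{\infty}\dim\mathscr{F}^{\lambda}_{i,\mathrm{red}}H^0(X_{i,\mathrm{red}},mL|_{X_{i,\mathrm{red}}})\,d\lambda-O(m^{n}).
\]

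The heart of the argument is a layerwise lower bound. Over the generic point $\eta_i$ the Artinian ring $\mathcal{O}_{X,\eta_i}$ has length $m_i$; choosing a composition series and spreading out over a dense open $U_i\subset X_{i,\mathrm{red}}$ produces, after twisting by a bounded multiple $a_{i,j}L$ of $L$, sections $g_{i,1}=1,g_{i,2},\dots,g_{i,m_i}$ whose classes realise the $m_i$ graded layers of the dévissage along $X_{i,\mathrm{red}}$. Each $g_{i,j}$ lies in $\mathscr{F}^{-c_{i,j}}$ for a constant $c_{i,j}$ independent of $m$, since $\mathscr{F}$ is linearly bounded. Given a class in $\mathscr{F}^{\lambda}_{i,\mathrm{red}}H^0(X_{i,\mathrm{red}},mL|_{X_{i,\mathrm{red}}})$, I lift it to $\mathscr{F}^{\lambda}R_m$ and multiply by $g_{i,j}$; by multiplicativity of $\mathscr{F}$ the product lies in $\mathscr{F}^{\lambda-c_{i,j}}R_{m+a_{i,j}}$ and populates the $j$-th layer over $X_{i,\mathrm{red}}$. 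As the leading terms of these products along distinct components, and in distinct positions of the dévissage, are linearly independent off a proper closed subset, I expect
\[
\dim\mathscr{F}^{\lambda}R_m\ \ge\ \sum_{i=1}^{r}\sum_{j=1}^{m_i}\dim\mathscr{F}^{\lambda-c_{i,j}}_{i,\mathrm{red}}H^0(X_{i,\mathrm{red}},mL|_{X_{i,\mathrm{red}}})-O(m^{n-1}),
\]
uniformly in $\lambda$, the error absorbing the lower-dimensional dévissage pieces, the boundary of $U_i$, and the bounded twists $a_{i,j}$.

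Integrating this over $\lambda\in[-Cm,\infty)$, each bounded weight shift $c_{i,j}$ alters $\int\dim\mathscr{F}^{\bullet}_{i,\mathrm{red}}\,d\lambda$ by at most $|c_{i,j}|\cdot\dim H^0(X_{i,\mathrm{red}},mL|_{X_{i,\mathrm{red}}})=O(m^{n})$, so the shifts wash out; likewise the uniform $O(m^{n-1})$ error integrates to $O(m^{n})$ over an interval of length $O(m)$. Combining with the dimension asymptotics above gives $w_{\mathscr{F}}(m)\ge\sum_i m_i\,w_{\mathscr{F}_{i,\mathrm{red}}}(m)-O(m^{n})$. Dividing by $m^{n+1}$ and passing to the $\limsup$, and using that each $X_{i,\mathrm{red}}$ is integral so that $B_{\mathscr{F}_{i,\mathrm{red}}}=\lim_{m}w_{\mathscr{F}_{i,\mathrm{red}}}(m)/m^{n+1}$ genuinely exists by the Okounkov-body formula (\ref{eqbary}), the $\limsup$ of the right-hand side splits into the sum of the individual limits, and we obtain $B_{\mathscr{F}}\ge\sum_i m_i B_{\mathscr{F}_{i,\mathrm{red}}}$.

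The main obstacle is the layerwise bound of the second paragraph. A naive pointwise comparison fails: because $\mathscr{F}^{\lambda}R_m$ need only surject onto $\mathscr{F}^{\lambda}_{i,\mathrm{red}}$, one a priori gets only $\dim\mathscr{F}^{\lambda}R_m\le\sum_i\dim\mathscr{F}^{\lambda}_{i,\mathrm{red}}$, with no multiplicities; the factors $m_i$ are recovered solely through the nilpotent and cross-component directions, and only after integration, precisely because multiplying by $g_{i,j}$ shifts the filtration level by the bounded but in general nonzero amount $c_{i,j}$. Thus the delicate points are (a) producing the $g_{i,j}$ as honest sections in a bounded twist with a filtration level independent of $m$, (b) proving that the resulting $\sum_i m_i$ families of sections are independent to leading order in $m$, and (c) carrying this out uniformly for reducible and non-reduced $X$ at once, which is exactly why we must work with weight functions of general schemes rather than with Okounkov bodies.
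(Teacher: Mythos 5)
Your overall strategy---bounding $\dim\mathscr{F}^{\lambda}R_m$ from below uniformly in $\lambda$ by multiplying lifted sections against finitely many ``layer'' sections of bounded degree and bounded filtration level, and letting the bounded shifts wash out after integration---is genuinely different from the paper's proof, and it can be made to work. But as written there is a gap exactly where you flag it: the layerwise inequality is only ``expected,'' and the justification you sketch (``leading terms \dots linearly independent off a proper closed subset,'' with an $O(m^{n-1})$ error) is not an argument. Two things are unaddressed: (a) each product $g_{i,j}v_{i,j}$ depends on the chosen lift $v_{i,j}$ of the class $\bar v_{i,j}\in\mathscr{F}^{\lambda}_{i,\mathrm{red}}H^0(X_{i,\mathrm{red}},mL)$, so the map you are counting with is not even well defined until you specify in which quotient its image is taken; (b) independence of the products across components and across nilpotent layers is not a generic-position statement and does not follow from choosing the $g_{i,j}$ generally.

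Both problems disappear, with no error term at all, once the d\'evissage itself does the bookkeeping. Take a filtration $0=\mathcal{G}_0\subset\mathcal{G}_1\subset\cdots\subset\mathcal{G}_N=\mathcal{O}_X$ by coherent ideals whose graded pieces $\mathcal{G}_j/\mathcal{G}_{j-1}$ are pushforwards of nonzero ideal sheaves on integral closed subschemes $Z_j$; localizing at $\eta_i$ shows that $Z_j=X_{i,\mathrm{red}}$ for exactly $m_i$ indices $j$, all other $Z_j$ having dimension $<n$. For one fixed $a\gg0$ choose $g_{i,j}\in H^0(X,\mathcal{G}_j\otimes aL)$ with nonzero image in $H^0\bigl((\mathcal{G}_j/\mathcal{G}_{j-1})\otimes aL\bigr)$; by linear boundedness $g_{i,j}\in\mathscr{F}^{-c}R_a$ for some constant $c$ independent of $m,\lambda$. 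Now set $V_j=\mathscr{F}^{\lambda-c}R_{m+a}\cap H^0(X,\mathcal{G}_j\otimes(m+a)L)$, so that $\dim\mathscr{F}^{\lambda-c}R_{m+a}=\sum_{j}\dim (V_j/V_{j-1})$. If $Z_j=X_{i,\mathrm{red}}$, multiplication by $g_{i,j}$ sends any lift $v\in\mathscr{F}^{\lambda}R_m$ of $\bar v$ into $V_j$, and its class in $V_j/V_{j-1}$ depends only on $\bar v$ because the ideal sheaf of $X_{i,\mathrm{red}}$ multiplies $\mathcal{G}_j$ into $\mathcal{G}_{j-1}$; the resulting map $\bar v\mapsto[g_{i,j}v]$ is injective because a nonzero map from a line bundle into a torsion-free sheaf on the integral scheme $X_{i,\mathrm{red}}$ has zero kernel. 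Hence $\dim(V_j/V_{j-1})\ge\dim\mathscr{F}^{\lambda}_{i,\mathrm{red}}H^0(X_{i,\mathrm{red}},mL)$ for each of the $m_i$ relevant indices $j$, and summing over $j$ gives
\[
\dim\mathscr{F}^{\lambda-c}R_{m+a}\ \ge\ \sum_{i=1}^{r}m_i\dim\mathscr{F}^{\lambda}_{i,\mathrm{red}}H^0(X_{i,\mathrm{red}},mL)
\]
uniformly in $\lambda$, with no generic choices and no error. Integrating over $\lambda\in[-Cm,\infty)$, using (\ref{eqint}) in degrees $m+a$ and $m$ together with $\sum_i m_ih^0(X_{i,\mathrm{red}},mL|_{X_{i,\mathrm{red}}})=\dim R_{m+a}+O(m^{n-1})$, gives $w_{\mathscr{F}}(m+a)\ge\sum_i m_iw_{\mathscr{F}_{i,\mathrm{red}}}(m)-O(m^{n})$, after which your concluding limit argument (with (\ref{eqbary}) guaranteeing that each $B_{\mathscr{F}_{i,\mathrm{red}}}$ is a genuine limit) is correct.

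For contrast, the paper argues slice by slice in the slope $t$: it proves $\underline{\mathrm{vol}}(R^{(t)})\ge\sum_i m_i\mathrm{vol}(R^{(t)}|_{X_{i,\mathrm{red}}})$ (Lemma \ref{rest1}) by first splitting off irreducible components using sections lying in the conductor times annihilator ideals (Lemma \ref{redble}), then inducting on the multiplicity via a section generating a minimal ideal at the generic point, and finally integrates over $t$ by Fatou's lemma. Working inside the subalgebras $R^{(t)}$ forces the multiplier sections to have filtration level proportional to their degree, which is what necessitates the slope-raising trick and the exclusion $t\ne e_i$ there; your uniform-in-$\lambda$ formulation with bounded-level multipliers avoids both, at the price of the d\'evissage counting above. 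Once the gap is closed as indicated, your route is a legitimate, and arguably more streamlined, alternative.
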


 Set $$\underline{\mathrm{vol}}(R)=n!\liminf_{m\to\infty}\frac{\mathrm{dim}\,R_m}{m^n}$$ and $e_i=e_{\max}(R|_{X_{i,\mathrm{red}}},\mathscr{F}_{X_{i,\mathrm{red}}})$. Proposition \ref{restr} follows from the lemma below.

 \begin{lem}\label{rest1}
For $t\in\mathbb{R}\setminus\{e_1,\ldots,e_r\}$, $\underline{\mathrm{vol}}(R^{(t)})\ge \sum_{i=1}^rm_i\mathrm{vol}(R^{(t)}|_{X_{i,\mathrm{red}}})$. 
 \end{lem}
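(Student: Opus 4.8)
The plan is to reduce the estimate to the integral components $X_{i,\mathrm{red}}$ by stratifying $\mathcal{O}_X$ according to its generic multiplicities, and then to produce, inside $V_m:=\mathscr{F}^{mt}H^0(X,mL)$, roughly $m_i$ copies of the restricted filtered series $R^{(t)}|_{X_{i,\mathrm{red}}}$. First I would choose a dévissage $0=\mathcal{G}_0\subset\mathcal{G}_1\subset\cdots\subset\mathcal{G}_N=\mathcal{O}_X$ by coherent subsheaves whose successive quotients are $\mathcal{G}_j/\mathcal{G}_{j-1}\cong(\iota_{Z_j})_*\mathcal{O}_{Z_j}$ for integral closed subschemes $Z_j\subseteq X$. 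Since $X$ is equidimensional of dimension $n$ with fundamental cycle $\sum_i m_i[X_{i,\mathrm{red}}]$, the $n$-dimensional quotients are exactly those with $Z_j=X_{i,\mathrm{red}}$, and each $X_{i,\mathrm{red}}$ occurs precisely $m_i$ times, while all remaining $Z_j$ satisfy $\dim Z_j\le n-1$. Tensoring with the line bundle $mL$ and forming the induced filtration $W^{(m)}_j=H^0(X,\mathcal{G}_j\otimes mL)$ of $H^0(X,mL)$, I intersect with $V_m$ to get $\dim V_m=\sum_j\dim\mathrm{gr}_j$, where $\mathrm{gr}_j:=(V_m\cap W^{(m)}_j)/(V_m\cap W^{(m)}_{j-1})$ embeds into $H^0(Z_j,mL|_{Z_j})$.

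The strata with $\dim Z_j\le n-1$ contribute only $O(m^{n-1})$ and may be discarded. For an $n$-dimensional stratum with $Z_j=X_{i,\mathrm{red}}$, $\mathrm{gr}_j$ is a subspace of $H^0(X_{i,\mathrm{red}},mL|_{X_{i,\mathrm{red}}})$, and I must bound its dimension from below. When $t<e_i$ the restricted series $R^{(t)}|_{X_{i,\mathrm{red}}}$ contains an ample series by \cite[Lemma 1.6]{BC}, so it carries $\asymp m^n$ sections at each prescribed level; multiplicativity of $\mathscr{F}$ (Definition \ref{fund.def}(2)) then lets me feed such sections into the $j$-th layer by multiplying with a fixed auxiliary element, at the cost of a filtration shift $c$ bounded independently of $m$. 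This shows $\dim\mathrm{gr}_j\ge\dim(R^{(t+c/m)}|_{X_{i,\mathrm{red}}})_m$, with $t+c/m\to t$. When $t>e_i$ we have $\mathrm{vol}(R^{(t)}|_{X_{i,\mathrm{red}}})=0$, so that component contributes nothing and no bound is needed.

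Since the decomposition $\dim V_m=\sum_j\dim\mathrm{gr}_j$ is additive, summing over the $m_i$ strata with $Z_j=X_{i,\mathrm{red}}$ and over $i$ yields, for each $\delta>0$ and all large $m$, the bound $\dim V_m\ge\sum_i m_i\dim(R^{(t+\delta)}|_{X_{i,\mathrm{red}}})_m-O(m^{n-1})$, using $t+c/m\le t+\delta$ and monotonicity of the restricted series in the level. Dividing by $m^n$ and taking $\liminf_{m\to\infty}$ (the limit on each integral $X_{i,\mathrm{red}}$ exists by the Okounkov-body theory of \S\ref{vol}) gives $\underline{\mathrm{vol}}(R^{(t)})\ge\sum_i m_i\,\mathrm{vol}(R^{(t+\delta)}|_{X_{i,\mathrm{red}}})$. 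Letting $\delta\to0$ and invoking continuity of $t\mapsto\mathrm{vol}(R^{(t)}|_{X_{i,\mathrm{red}}})$ at $t$ then yields the assertion; this continuity, together with the ample-series dichotomy above, is exactly where the hypothesis $t\notin\{e_1,\ldots,e_r\}$ is used.

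The hard part is the lower bound on the individual graded pieces: the filtration $\mathscr{F}$ need not spread its mass evenly across the multiplicity layers $\mathcal{G}_j/\mathcal{G}_{j-1}$, so a priori a deep layer could carry far fewer sections at level $\ge mt$ than the reduced restriction does. The mechanism for overcoming this is to transport sections between layers via multiplicativity at the expense of a bounded filtration shift and to absorb that shift by replacing $t$ with $t+\delta$; making this transport realize the full count $m_i\dim(R^{(t)}|_{X_{i,\mathrm{red}}})_m$ to leading order, while keeping all $O(m^{n-1})$ errors uniform, is the technical core. The exclusion of the finitely many values $e_i$ is precisely what renders the bounded shifts harmless in the limit and secures the needed continuity of the volume.
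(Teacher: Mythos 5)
Your argument is correct in substance, but it is organized differently from the paper's proof, and two ingredients need repair before it closes. The paper splits the statement in two: Lemma \ref{redble} handles the reducible case by constructing, for each component with $t<e_i$, an auxiliary section $s_i\in R^{(t)}_{m_i}\cap H^0(X,m_iL\otimes\mathfrak{c}\cdot\mathrm{Ann}(\mathscr{I}_{X_i}))$ that is a unit at $\eta_i$, and then the irreducible non-reduced case is done by induction on the multiplicity, using a section $s\in R^{(t)}_m$ generating a minimal ideal of the Artinian ring $\mathcal{O}_{X,\eta}$, which yields $\dim R^{(t)}_{p+m}\ge\dim(R^{(t)}|_{X'})_{p+m}+\dim(R^{(t)}|_{X''})_p$ with $X'=V(s)$ of multiplicity one less and $X''=V(\mathrm{Ann}(s))$ generically reduced. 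Unrolled, this induction is precisely a d\'evissage of $\mathcal{O}_X$, so your single-filtration setup merges the paper's two steps; the counting you need (each $X_{i,\mathrm{red}}$ appearing exactly $m_i$ times among the $n$-dimensional quotients) is correct, by localizing the d\'evissage at $\eta_i$. The substantive difference is where the filtration-level bookkeeping is done. The paper arranges its auxiliary sections to lie \emph{exactly} in $R^{(t)}$ (that is the point of the computation $s'\cdot R^{(t+\epsilon)}_p\subset R^{(t)}_{l+p}$ for $p\epsilon\ge(C+t)l$ inside the proof of Lemma \ref{redble}), so multiplicativity sends $(R^{(t)}|_{X_i})_k$ directly into $R^{(t)}_{k+m}$ and no limit in $t$ is ever taken. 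You instead take auxiliary elements of uncontrolled level, absorb the shift into $t+\delta$, and let $\delta\to0$; this is legitimate, and your well-definedness and injectivity mechanism for $\bar u\mapsto[\sigma_j u]\in\mathrm{gr}_j$ does work (choose $\sigma_j\in H^0(X,\mathcal{G}_j\otimes c_jL)$ with image nonzero at $\eta_i$, available for large $c_j$ by Serre vanishing and global generation; a difference of lifts of $\bar u$ dies in the quotient layer, and injectivity follows from torsion-freeness on the integral $Z_j$).

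The two repairs. First, the standard d\'evissage does \emph{not} give quotients $\mathcal{G}_j/\mathcal{G}_{j-1}\cong(\iota_{Z_j})_*\mathcal{O}_{Z_j}$; it gives $(\iota_{Z_j})_*\mathcal{I}_j$ for coherent ideal sheaves $\mathcal{I}_j\subseteq\mathcal{O}_{Z_j}$. This is harmless for your argument---ideal sheaves on integral $Z_j$ agree with $\mathcal{O}_{Z_j}$ generically, so the multiplicity count and the $O(m^{n-1})$ estimates survive, and your injectivity only needs $\mathcal{I}_j$ torsion-free---but the statement as written is not what the d\'evissage theorem provides. Second, and more seriously, your final step invokes right-continuity of $t\mapsto\mathrm{vol}(R^{(t)}|_{X_{i,\mathrm{red}}})$ at $t\ne e_i$ without proof. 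This is true, and is exactly the extra lemma your route needs that the paper's route avoids: for $t<e_i$ one has $\mathrm{vol}(R^{(s)}|_{X_{i,\mathrm{red}}})=n!\,\rho(\{G_i\ge s\})$ by \cite[Theorem 1.11]{BC} for the concave transform $G_i$ of \S\ref{vol}, and for a concave function the sets $\{G_i\ge t\}$ and $\{G_i>t\}$ differ, when $t<\sup G_i=e_i$, by a subset of the boundary of a convex body, which has Lebesgue measure zero; the possible jump occurs only at $t=e_i$. You correctly identified this as the place where $t\notin\{e_1,\ldots,e_r\}$ enters, but it must be proved, not just invoked. With these two points supplied, your proof is complete; the trade-off is that the paper's level-exact auxiliary sections make the argument purely algebraic at a fixed $t$, while your version is a cleaner one-pass decomposition at the cost of a limiting argument in $t$.
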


\begin{proof}[Proof of Proposition \ref{restr}]
We assume Lemma \ref{rest1}. Then it follows from Fatou's lemma and from the equation (\ref{eqint}) applied to $\mathscr{F}$ that
\begin{align*}
n!B_{\mathscr{F}}&\ge n!\liminf_{m\to\infty}\frac{w_{\mathscr{F}}(m)}{m^{n+1}}\ge\int^{\infty}_{-C}\underline{\mathrm{vol}}R^{(x)}dx-C\mathrm{vol}(R)\\
&\ge \sum_{i=1}^rm_i\left(\int^{\infty}_{-C}\mathrm{vol}(R^{(t)}|_{X_{i,\mathrm{red}}})dx-C\mathrm{vol}(R|_{X_{i,\mathrm{red}}})\right)=n!\sum_{i=1}^rm_iB_{\mathscr{F}_{i,\mathrm{red}}}.
\end{align*}
The last equality follows from the equation (\ref{eqlimit}).
\end{proof}
Thus it suffices to show Lemma \ref{rest1}. First, note that it suffices to consider the case when the canonical morphism $\mathcal{O}_X\to \prod \mathcal{O}_{X_i}$ is injective by replacing $X$ by the closed subscheme defined by the ideal $\mathrm{Ker}(\mathcal{O}_X\to \prod \mathcal{O}_{X_i})$. Let $\mathfrak{c}\subset\mathcal{O}_X$ be the inverse image of $\mathrm{Hom}_{\mathcal{O}_X}(\prod \mathcal{O}_{X_i},\mathcal{O}_X)$ under the natural map $\mathcal{O}_X\to \mathrm{Hom}_{\mathcal{O}_X}(\prod \mathcal{O}_{X_i},\prod \mathcal{O}_{X_i})$. 

 \begin{lem}\label{redble}
Suppose that $t\ne e_i$ for $1\le i\le r$. Then $\underline{\mathrm{vol}}(R^{(t)})\ge\sum_{i=1}^r\underline{\mathrm{vol}}(R^{(t)}|_{X_i})$. 
  \end{lem}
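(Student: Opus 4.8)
The plan is to show that the natural restriction map, which already gives the \emph{opposite} inequality, is surjective up to lower-order terms. Write $W^{(i)}\coloneq R^{(t)}|_{X_i}$ for brevity. Since $\mathcal{O}_X\to\prod_i\mathcal{O}_{X_i}$ is injective after the reduction made above, the combined restriction $R^{(t)}_m\hookrightarrow\prod_i(W^{(i)})_m$ is injective, so $\dim R^{(t)}_m\le\sum_i\dim(W^{(i)})_m$; the content of the lemma is the reverse estimate modulo $O(m^{n-1})$. First I would discard the components with $t>e_i$: there $R^{(t)}|_{X_{i,\mathrm{red}}}$ has vanishing volume and, comparing $X_i$ with $X_{i,\mathrm{red}}$, one gets $\underline{\mathrm{vol}}(W^{(i)})=0$, so these drop from the right-hand side. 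This is exactly where the hypothesis $t\neq e_i$ is used, to guarantee the clean dichotomy $t<e_i$ or $t>e_i$. For the remaining indices the goal is to produce, in one common degree $D$, sections $\sigma_i\in R^{(t)}_D$ that vanish on every other component and restrict to a section of $X_i$ nonzero at the generic point $\eta_i$.

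Granting such $\sigma_i$, the assembly is routine. Fixing linear lifts $\ell_i\colon(W^{(i)})_m\to\mathscr{F}^{mt}R_m$ of the restriction maps, I would set
\[
\Phi\colon\prod_i(W^{(i)})_m\longrightarrow R^{(t)}_{m+D},\qquad \Phi((w_i))=\sum_i\sigma_i\,\ell_i(w_i),
\]
which lands in $\mathscr{F}^{(m+D)t}R_{m+D}=R^{(t)}_{m+D}$ by multiplicativity of $\mathscr{F}$, each factor lying in $R^{(t)}$. Since $\sigma_j|_{X_i}=0$ for $j\neq i$, the restriction of $\Phi((w_i))$ to $X_i$ equals $\sigma_i|_{X_i}\,w_i$, so $\ker\Phi=\prod_i\ker(\,\cdot\,\sigma_i|_{X_i})$; as $\sigma_i|_{X_i}$ is nonzero at $\eta_i$, its multiplication kernel on $(W^{(i)})_m$ is supported in codimension $\ge1$ and contributes only $O(m^{n-1})$. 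Hence $\dim R^{(t)}_{m+D}\ge\sum_i\dim(W^{(i)})_m-O(m^{n-1})$, and dividing by $(m+D)^n\sim m^n$, taking $\liminf$, and using $\liminf(\sum)\ge\sum\liminf$ gives $\underline{\mathrm{vol}}(R^{(t)})\ge\sum_i\underline{\mathrm{vol}}(W^{(i)})$.

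The hard part is the construction of $\sigma_i$, which must be simultaneously \emph{geometric} (vanishing off $X_i$) and \emph{filtration-theoretic} (high $\mathscr{F}$-weight); no single multiplication achieves both, because multiplying by anything of low weight destroys membership in $R^{(t)}$. My plan is to combine two ingredients multiplicatively. From the conductor one has the idempotent $e_i$ and the ideal $\mathfrak{c}e_i$, which is contained in the ideal of $\bigcup_{j\neq i}X_j$ and is an isomorphism at $\eta_i$; for $d'\gg0$ the sheaf $(\mathfrak{c}e_i)(d'L)$ is globally generated, providing a section $c_i$ that vanishes off $X_i$ and is nonzero at $\eta_i$. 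Separately, since $t<e_i=e_{\max}(R|_{X_{i,\mathrm{red}}},\mathscr{F}_{X_{i,\mathrm{red}}})$, the definition of $e_{\max}$ yields $k$ and $u_i\in\mathscr{F}^{\lambda}R_k$ with $\lambda/k>t$ whose image on $X_{i,\mathrm{red}}$, hence on $X_i$, is nonzero at $\eta_i$. Then $u_i^{N}c_i$ vanishes off $X_i$ (as $\mathfrak{c}e_i$ is an ideal), is nonzero at $\eta_i$, and, using $c_i\in R_{d'}=\mathscr{F}^{-Cd'}R_{d'}$ from linear boundedness, has weight ratio at least $\tfrac{N\lambda-Cd'}{Nk+d'}\to\lambda/k>t$; so for $N\gg0$ it lies in $R^{(t)}$, and replacing it by a suitable power makes the degree $D$ uniform in $i$. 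I expect the only remaining technical points to be checking that the multiplication kernel is genuinely $O(m^{n-1})$ on the possibly non-reduced $X_i$ and the elementary bookkeeping needed to reach a common degree $D$.
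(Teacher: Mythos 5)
Your core construction is the paper's own proof. For each component with $t<e_i$ you manufacture a section of $R^{(t)}$ that kills the other components and is generically a unit on $X_i$, by multiplying a conductor-type section of possibly very negative weight (the paper uses $\mathfrak{c}\cdot\mathrm{Ann}(\mathscr{I}_{X_i})$, you use the conductor times the idempotent; for the present purpose these play the same role) by a high power of a section of slope $>t$, so that the averaged weight exceeds $t$; this is exactly the paper's $s_i=s's''$ with $p\epsilon\ge(C+t)l$. Your assembly map $\Phi$ is the paper's map $h$ (you use linear lifts $\ell_i$ where the paper invokes well-definedness via $\mathrm{Ann}(\mathscr{I}_{X_i})$ --- an immaterial difference), and your kernel estimate via a coherent sheaf supported in codimension $\ge1$ is the paper's $\mathrm{Ker}\,h'$ argument. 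On the components with $t<e_i$ your argument is correct.

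The genuine flaw is the discard step. It is \emph{not} true that $t>e_i$ forces $\underline{\mathrm{vol}}(R^{(t)}|_{X_i})=0$: the invariant $e_i$ only sees the reduced structure, and on a non-reduced component the high-weight sections can restrict to nonzero nilpotent sections forming a series of positive volume. Concretely, take $X=X_1=\mathbb{P}^1\times\mathrm{Spec}\,\mathbb{C}[\epsilon]/(\epsilon^2)$, $L=\mathcal{O}(1)$, and $\mathscr{F}^{\lambda}R_m=\{a+\epsilon b:\mathrm{ord}_\infty(a)\ge\lambda,\ \mathrm{ord}_\infty(b)\ge\lambda-m\}$, a variant of the example in the Remark following Lemma \ref{limred}; this filtration is multiplicative and linearly bounded, and $e_1=1$, yet for $1<t<2$ one has $R^{(t)}_m=\epsilon\{b:\mathrm{ord}_\infty(b)\ge\lceil mt\rceil-m\}$, of dimension roughly $(2-t)m$, so $\underline{\mathrm{vol}}(R^{(t)}|_{X_1})=2-t>0$. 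Hence ``comparing $X_i$ with $X_{i,\mathrm{red}}$'' cannot deliver the vanishing you assert, and what your argument actually proves is the inequality with the sum restricted to the components with $t<e_i$. You should be aware that the paper's own proof does exactly the same thing: it concludes with $\underline{\mathrm{vol}}(R^{(t)})\ge\sum_{i\le r'}\underline{\mathrm{vol}}(R^{(t)}|_{X_i})$ and never addresses the components with $t>e_i$. That partial-sum statement is all that is used downstream: in deducing Lemma \ref{rest1} and Proposition \ref{restr}, the discarded components enter only through $\mathrm{vol}(R^{(t)}|_{X_{i,\mathrm{red}}})$, which does vanish for $t>e_i$ since the \emph{reduced} restriction of the filtration is eventually zero. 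So either weaken the conclusion to the partial sum (which suffices for the application), or supply a genuinely new argument for the nilpotent contributions; the justification as you wrote it cannot be repaired.
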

 
 \begin{proof}
 First, we prove that if $t<e_i$, then there exist an integer $m_i\in\mathbb{Z}_{>0}$ and $s_i\in R_{m_i}^{(t)}\cap H^0(X,m_iL\otimes\mathfrak{c}\cdot \mathrm{Ann}(\mathscr{I}_{X_i}))$ such that $s_i$ is a unit at the generic point $\eta_i$ of $X_i$ where $\mathrm{Ann}(\mathscr{I}_{X_i})$ is the annihilator of the ideal corresponding to $X_i$. Indeed, there exists a section $s'\in H^0(X,lL\otimes\mathfrak{c}\cdot \mathrm{Ann}(\mathscr{I}_{X_i}))$ such that $s'$ is a unit at $\eta_i$ for some $l\in\mathbb{Z}_{>0}$. Consider $s'\in R_{l}^{(-C)}$. Next, take a sufficiently small constant $0<\epsilon\ll1$ that $t+\epsilon<e_i$. Then $s'\cdot R_{p}^{(t+\epsilon)}\subset R_{l+p}^{(\frac{p(t+\epsilon)-Cl}{l+p})}$. If we take $p$ sufficiently large that $p\epsilon\ge (C+t)l$, then $s'\cdot R_{p}^{(t+\epsilon)}\subset R_{l+p}^{(t)}$. Furthermore, there exists a section $s''\in R_{p}^{(t+\epsilon)}$ such that the restriction $s''|_{X_{i,\mathrm{red}}}\ne0$
 . Thus, let $s_i=s's''$ and $m_i=l+p$.
 
 Finally, let $0\le r'\le r$ be the integer such that $t<e_i$ iff $i\le r'$. By what we have shown, there exist sections $s_i\in R_{m_i}^{(t)}\cap H^0(X,m_iL\otimes\mathfrak{c})$ such that $s_i$ is a unit at $\eta_i$ for any $i\le r'$. Replacing $s_i$ by $s_i^k$ for some $k\in\mathbb{Z}_{>0}$, we may assume that $m=m_i$ for $i\le r'$. Consider the following $\mathbb{C}$-linear map
 \[
h: \prod_{i\le r'}(R|_{X_i})_k\ni(t_i|_{X_i})\mapsto \sum_{i\le r'} s_it_i\in R_{k+m}.
 \]
 It is easy to see that $h$ is well-defined and $\mathrm{Ker}\, h\subset \bigoplus H^0(X_i,mL|_{X_i}\otimes \mathrm{Ker}\, h')$ where $h':\prod_{i\le r'}\mathcal{O}_{X_i}\to \mathcal{O}_X$ is the map induced by $s_i$'s. Since $\mathrm{Ker}\, h'$ has the nowhere-dense support in $\bigcup_{i\le r'} X_i$, we have for $i\le r'$, $$\lim_{m\to\infty}\frac{h^0(X_i,mL|_{X_i}\otimes\mathrm{Ker}\, h')}{m^n}=0.$$ Thus we have $\underline{\mathrm{vol}}(R^{(t)})\ge\sum_{i=1}^{r'}\underline{\mathrm{vol}}(R^{(t)}|_{X_i})$.
   \end{proof}
   \begin{rem}
 For general linear series $R$, $\underline{\mathrm{vol}}(R)\ge\sum_{i=1}^r\underline{\mathrm{vol}}(R|_{X_i})$ does not hold. Fix a closed point $0\in\mathbb{P}^1$. Let $X=\mathbb{P}^1\cup_{0}\mathbb{P}^1$ be a reducible curve with two irreducible components $\mathbb{P}^1$ intersecting transversally at $0$. Let $R_m$ be the diagonal of $H^0(\mathbb{P}^1,\mathcal{O}(m))\oplus H^0(\mathbb{P}^1,\mathcal{O}(m))$. Then let $R=\bigoplus_{m\ge0}R_m$ and we have $\underline{\mathrm{vol}}(R)=\mathrm{vol}(R|_{\mathbb{P}^1})=1$ for two components.
  \end{rem}

  By Lemma \ref{redble}, it suffices to show Lemma \ref{rest1} when $X$ is irreducible.
\begin{proof}[Proof of Lemma \ref{rest1}]
We may assume that $X$ is irreducible and $t<e_{\max}(R|_{X_{\mathrm{red}}},\mathscr{F}_{X_{\mathrm{red}}})$. Let $m_0$ be the multiplicity of $X$. We prove the assertion by the induction on $m_0$. Suppose that $m_0>1$ and let $\mathcal{O}_{X,\eta}$ be the local ring at the generic point $\eta$. Since $\mathcal{O}_{X,\eta}$ is Artinian, there exists an element $f\in\mathcal{O}_{X,\eta}$ that generates a non-zero minimal ideal. Here, we fix an isomorphism of $L|_U$ and $\mathcal{O}_U$ where $U$ is a non-empty open subset. Since $L$ is ample, there exists a section $s\in H^0(X,mL)$ such that the germ $s_{\eta}$ at $\eta$ of $s$ generates the minimal ideal $f\cdot\mathcal{O}_{X,\eta}$ for sufficiently large $m>0$. As in the proof of Lemma \ref{redble}, we may replace $s$ and $m$, and assume that $s\in R^{(t)}_m$ and generates $f\cdot\mathcal{O}_{X,\eta}$. For $p\in\mathbb{Z}_{\ge0}$, we consider a surjective map $r_p:
R_{p}^{(t)}\to (R^{(t)}|_{X'})_{p}$
where $X'$ is the closed subscheme defined by the ideal generated by $s$. Since the multiplicity of $X'$ is $m_0-1$, it follows that $$\underline{\mathrm{vol}}(R^{(t)}|_{X'})\ge (m_0-1)\mathrm{vol}(R^{(t)}|_{X_{\mathrm{red}}})$$ from the induction hypothesis. Note that $\mathrm{Ker}\, r_{p+m}$ contains $s\cdot R^{(t)}_p$. Let $X''$ be the closed subscheme defined by $\mathrm{Ann}(s)$. Then 
\[
s\cdot R^{(t)}_p\cong (R^{(t)}|_{X''})_p.
\]
Thus we have $\underline{\mathrm{vol}}(R^{(t)})\ge\underline{\mathrm{vol}}(R^{(t)}|_{X'})+\underline{\mathrm{vol}}(R^{(t)}|_{X''})$. Since $X''$ is generically reduced, $$\underline{\mathrm{vol}}(R^{(t)}|_{X''})\ge \mathrm{vol}(R^{(t)}|_{X_{\mathrm{red}}}).$$
Hence the assertion holds.
\end{proof}

On the other hand, we remark that if $X$ is reduced then we have equality in Proposition \ref{restr}. We first observe that if $X$ is reduced in Lemma \ref{redble}, then
  \begin{equation}
  \mathrm{vol}(R^{(t)})=\underline{\mathrm{vol}}(R^{(t)})=\sum_{i=1}^{r}\mathrm{vol}(R^{(t)}|_{X_i}).\label{eqlast}
  \end{equation}  Indeed, by the restriction map $R^{(t)}\to \prod_{i=1}^r R^{(t)}|_{X_i}$, we have
  \begin{equation*}
  \mathrm{vol}(R^{(t)})\le\sum_{i=1}^{r}\mathrm{vol}(R^{(t)}|_{X_i}).
  \end{equation*}
  \begin{lem}\label{limred}
 Notations as above. Then $\lim_{l\to \infty}\frac{w(l)}{l^{n+1}}$ exists and we have
\[
\lim_{l\to \infty}\frac{w(l)}{l^{n+1}}=B_{\mathscr{F}}= \sum_{i=1}^rB_{\mathscr{F}_{i,\mathrm{red}}}.
\]
  \end{lem}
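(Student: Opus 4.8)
The plan is to upgrade the one-sided estimate of Proposition \ref{restr} to an equality by exploiting that, since $X$ is reduced, each irreducible component $X_i=X_{i,\mathrm{red}}$ is an \emph{integral} variety, to which the Okounkov body machinery of \S\ref{vol} applies. I would prove existence of the limit directly by a dominated-convergence argument on the integral formula (\ref{eqint}), rather than separately squeezing $\liminf$ and $\limsup$. First I record the component-wise statement: because each $X_i$ is integral, the volume of every graded linear series $R^{(x)}|_{X_i}$ is a genuine limit (not merely a $\limsup$), so by (\ref{eqlimit}) each barycenter
\[
B_{\mathscr{F}_{i,\mathrm{red}}}=\lim_{m\to\infty}\frac{w_{\mathscr{F}_{i,\mathrm{red}}}(m)}{m^{n+1}}=\frac{1}{n!}\left(\int_{-C}^{\infty}\mathrm{vol}(R^{(x)}|_{X_i})\,dx-C\,\mathrm{vol}(R|_{X_i})\right)
\]
exists. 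Here I must check that (\ref{eqlimit}) is legitimately applied to the restricted series $R|_{X_i}$, which is a linear series on the variety $X_i$ containing an ample series for $x<e_i$ by \cite[Lemma 1.6]{BC}.

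Next I would rewrite (\ref{eqint}) after the substitution $x=\lambda/m$, enlarging $C$ (by linear boundedness, condition (3) of Definition \ref{fund.def}) so that the integrand vanishes for $x\ge C$ and $R_m^{(-C)}=R_m$, to get
\[
\frac{w(m)}{m^{n+1}}=\int_{-C}^{\infty}\frac{\dim R_m^{(x)}}{m^n}\,dx-C\,\frac{\dim R_m}{m^n}.
\]
The crucial input is the reduced-case identity (\ref{eqlast}): for every $x\notin\{e_1,\dots,e_r\}$ one has $\mathrm{vol}(R^{(x)})=\underline{\mathrm{vol}}(R^{(x)})=\sum_i\mathrm{vol}(R^{(x)}|_{X_i})$, so the pointwise limit $\lim_m \dim R_m^{(x)}/m^n=\frac{1}{n!}\sum_i\mathrm{vol}(R^{(x)}|_{X_i})$ exists off the finite, hence null, set $\{e_i\}$. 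To pass the limit through the integral I dominate the integrand: it is supported on $[-C,C]$, and there $\dim R_m^{(x)}/m^n\le \dim R_m/m^n$, which converges (apply (\ref{eqlast}) at $t=-C$) and is therefore uniformly bounded in $m$, giving an $m$-independent integrable majorant. Dominated convergence then yields the limit of the integral term, the subtracted term converges to $C\,\mathrm{vol}(R)/n!=C\sum_i\mathrm{vol}(R|_{X_i})/n!$, and reassembling against the component-wise formula above produces $\lim_m w(m)/m^{n+1}=\sum_i B_{\mathscr{F}_{i,\mathrm{red}}}$. Since the limit exists it coincides with the $\limsup$ defining $B_{\mathscr{F}}$, which gives the full chain of equalities.

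The main obstacle is precisely this interchange of limit and integral, i.e.\ producing the $m$-independent integrable dominating function and disposing of the exceptional values $\{e_i\}$ where (\ref{eqlast}) may fail. This is exactly the step where reducedness is indispensable: for non-reduced $X$, or even for a reduced but non-saturated linear series, the restriction map need not be asymptotically volume-preserving (cf.\ the Remark following Lemma \ref{redble}), and only the inequality of Proposition \ref{restr} survives. I expect the remaining bookkeeping—matching the integration limits of the component-wise formula, and confirming $\mathrm{vol}(R)=\sum_i\mathrm{vol}(R|_{X_i})$ at the base point—to be routine once the domination is in place.
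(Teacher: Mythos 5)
Your proposal is correct and takes essentially the same route as the paper's proof: the paper likewise combines the reduced-case identity (\ref{eqlast}) (valid off the finite set $\{e_1,\ldots,e_r\}$) with the dominated convergence theorem applied to the equation (\ref{eqint}), identifying the limit via (\ref{eqlimit}) on each integral component $X_i$. Your write-up merely spells out the details the paper leaves implicit, namely the $m$-independent integrable majorant and the check that (\ref{eqlimit}) legitimately applies to each restricted series $R|_{X_i}$.
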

  \begin{proof}
  Note that the equation (\ref{eqlast}) holds
  for $x\in \mathbb{R}\setminus\{e_1,\ldots,e_r\}$. Thus, we obtain the assertion by the dominated convergence theorem applied to the equation (\ref{eqint}).
  \end{proof}
   \begin{rem}
   $\underline{\mathrm{vol}}(R^{(t)})\ge\sum_{i=1}^{r'}m_i\mathrm{vol}(R^{(t)}|_{X_i})$ can be strict in general. Consider $X=\mathbb{P}^1\times_\mathbb{C}\mathrm{Spec}\, \mathbb{C}[\epsilon]/(\epsilon^2)$, $L=\mathcal{O}(1)$ and 
  \[
  \mathscr{F}^{\lambda}R_{m}=H^0(\mathbb{P}^1,\mathcal{O}(m-\lambda))\oplus\epsilon \mathscr{F}^{\lambda-m}_{\mathrm{triv}}H^0(\mathbb{P}^1,\mathcal{O}(m)).
  \]
  \end{rem}

\subsubsection{The Donaldson-Futaki invariants of good filtrations}\label{defgood}

In this section, we define good filtrations and the DF invariants of them.
 \begin{de}\label{goodfil}
 Let $(X,L)$ be an $n$-dimensional polarized deminormal scheme and $\mathscr{F}$ be a linearly bounded multiplicative $\mathbb{Z}$-filtration of $R=\bigoplus_{m\ge0}H^0(X,mL)$. 

Let $w(r)$ be the weight function of $\mathscr{F}$. Suppose that $w(r)=b_0r^{n+1}+b_1r^n+O(r^{n-1})$. Then we call $\mathscr{F}$ a {\it good} filtration of $R$ and we define the {\it DF invariant} of $\mathscr{F}$ as
\[
\mathrm{DF}(\mathscr{F})=2\frac{b_0a_1-b_1a_0}{a_0^2},
\]
where $\chi(X,kL)=a_0k^{n}+a_1k^{n-1}+O(k^{n-2})$. On the other hand, we define the $r$-th Chow weight as
 \[
 \mathrm{Chow}_r(\mathscr{F})=2\left(\frac{rb_0}{a_0}-\frac{w(r)}{\chi(X,rL)}\right).
 \]

 If $(X,\Delta,L)$ is a deminormal polarized pair, then we define the log DF invariant of a good filtration $\mathscr{F}$ as follows. If $\Delta=\sum c_iD_i$ and $\mathscr{F}_{D_i}$ is the restriction of $\mathscr{F}$ to $D_i$ with the weight function $w_i$, then we set 
   \[
   \mathrm{DF}_{\Delta}(\mathscr{F})=\mathrm{DF}(\mathscr{F})-\frac{b_0\tilde{a}_0-\tilde{b}_0a_0}{a_0^2}.
   \]
   Here, $\chi(\Delta,mL)=\tilde{a}_0m^{n-1}+O(m^{n-2})$ and $\tilde{b}_0=\lim_{m\to\infty}\sum \frac{c_iw_i(m)}{m^n}$.
 \end{de}

Next, we prepare the useful condition below.
\begin{ass}\label{crucial}
Let $(X,L)$ be a polarized {\bf reduced} scheme and $X=\bigcup_{i=1}^rX_i$ be the irreducible decomposition. Here, let $R=\bigoplus_{m\ge0}H^0(X,mL)$ and assume that $R|_{X_i}=\bigoplus_{m\ge0}H^0(X_i,mL|_{X_i})$ holds. Assume also that $H^0(X,L)$ generates $R$ and $R|_{X_i}$ for $i$. If $\mathscr{F}$ is a filtration of $(X,L)$, assume that there exists $N\in\mathbb{Z}_{>0}$ such that $R^{(-N)}=R$.
\end{ass}
Condition \ref{crucial} is also assumed in \cite[\S3]{Sz} to define an approximation in Definition \ref{appro} below. 
\begin{de}\label{appro}
Under Condition \ref{crucial}, take sufficiently large $N\in\mathbb{Z}_{>0}$ that $R^{(-N)}=R$. Suppose that $\{\mathscr{F}_{(k)}\}_{k\in\mathbb{Z}_{>0}}$ is a sequence of finitely generated filtrations of $R$ generated by $\mathscr{F}^{\bullet}R_k$ and $\mathscr{F}_{\mathrm{triv},(-N\textrm{-shift})}^\bullet R$ as in \cite[\S 3.2]{Sz} for $k\in\mathbb{Z}_{>0}$. Note that $\mathscr{F}_{(k)}^\lambda R_m\subset \mathscr{F}^\lambda R_m$. Then we call $\{\mathscr{F}_{(k)}\}_{k\in\mathbb{Z}_{>0}}$ an {\it approximation} to $\mathscr{F}$. 
 \end{de}
 
We remark that $\lim_{k\to\infty}B_{\mathscr{F}_{(k)}}$ (cf., \S\ref{nonreduced}) is independent of the choice of $N$.
 
\begin{lem}\label{restrr}
 Notations as above. Then  
\[
B_{\mathscr{F}}=\lim_{k\to \infty}B_{\mathscr{F}_{(k)}}.
\]
\end{lem}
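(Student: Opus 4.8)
The plan is to reduce to the case where $X$ is integral and then compare the concave transforms of $\mathscr{F}$ and of its approximations on a single Okounkov body. First I would record that, under Condition \ref{crucial}, the hypotheses pass to each irreducible component. Since $R^{(-N)}=R$, the restriction map $R\to R|_{X_i}$ is surjective in each degree and carries $\mathscr{F}^{-Nm}R_m=R_m$ onto $(R|_{X_i})_m$, so $(R|_{X_i})^{(-N)}=R|_{X_i}$. Because the restriction $r_i\colon R\to R|_{X_i}$ is a surjective homomorphism of graded algebras and the approximation $\mathscr{F}_{(k)}$ is, by Definition \ref{appro}, the filtration generated by $\mathscr{F}^{\bullet}R_k$ together with $\mathscr{F}_{\mathrm{triv},(-N\textrm{-shift})}^{\bullet}R$, the restricted filtration $(\mathscr{F}_{(k)})_{X_i}$ is generated by $r_i(\mathscr{F}^{\bullet}R_k)=(\mathscr{F}_{X_i})^{\bullet}(R|_{X_i})_k$ and the corresponding trivial shift; that is, $(\mathscr{F}_{(k)})_{X_i}=(\mathscr{F}_{X_i})_{(k)}$ is exactly the approximation to $\mathscr{F}_{X_i}$. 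As $X$ is reduced, $X_{i,\mathrm{red}}=X_i$, and Lemma \ref{limred} applied to $\mathscr{F}$ and to each $\mathscr{F}_{(k)}$ gives
\[
B_{\mathscr{F}}=\sum_{i=1}^{r}B_{\mathscr{F}_{X_i}},\qquad B_{\mathscr{F}_{(k)}}=\sum_{i=1}^{r}B_{(\mathscr{F}_{X_i})_{(k)}}.
\]
Since the sum is finite, it suffices to prove $B_{\mathscr{F}_{X_i}}=\lim_{k\to\infty}B_{(\mathscr{F}_{X_i})_{(k)}}$ for each integral component, so I may assume $X$ is an integral variety.

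Next, with $X$ integral I would pass to the Okounkov body $\Delta=\Delta_R$ of \S\ref{vol} and use the concave transform. Writing $G$ for the concave transform associated with $\mathscr{F}$ and $G_k$ for that of $\mathscr{F}_{(k)}$, the barycenter formula (\ref{eqbary}) of \cite[Theorem 1.11]{BC} gives
\[
B_{\mathscr{F}}=\int_{\Delta}G\,d\rho,\qquad B_{\mathscr{F}_{(k)}}=\int_{\Delta}G_k\,d\rho,
\]
where both integrals are genuine limits, the right-hand side of (\ref{eqbary}) computing $\lim_m w(m)/m^{n+1}$. Because $\mathscr{F}_{(k)}^{\lambda}R_m\subset\mathscr{F}^{\lambda}R_m$, we have $R_{(k)}^{(t)}\subset R^{(t)}$, hence $\Delta_{R_{(k)}^{(t)}}\subset\Delta_{R^{(t)}}$ for every $t$, and therefore $G_k\le G$ pointwise on $\Delta$. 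This already yields $\limsup_{k}B_{\mathscr{F}_{(k)}}\le B_{\mathscr{F}}$.

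The remaining, and main, difficulty is the reverse inequality. The functions $G_k$ and $G$ all take values in the fixed interval $[-N,e_{\max}(R,\mathscr{F})]$, so they are uniformly bounded, and I would establish $\liminf_{k\to\infty}G_k\ge G$ almost everywhere on the interior of $\Delta$; combined with $G_k\le G$ this gives $G_k\to G$ a.e., and dominated convergence then forces $\int_\Delta G_k\,d\rho\to\int_\Delta G\,d\rho$, which both produces the limit and identifies it with $B_{\mathscr{F}}$. The pointwise lower bound is the technical heart: fixing $p$ in the interior of $\Delta$ and $t<G(p)$, so that $p\in\Delta_{R^{(t)}}$, one must produce, for all large $k$, sections in $\mathscr{F}_{(k)}$ whose normalized valuations cluster near $p$ at filtration level arbitrarily close to $t$; equivalently, the Okounkov bodies $\Delta_{R_{(k)}^{(t-\varepsilon)}}$ of the approximations must exhaust $\Delta_{R^{(t)}}$ as $k\to\infty$. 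This is precisely the content of Sz\'ekelyhidi's approximation argument \cite[\S3]{Sz}: the filtration generated by the single graded piece $\mathscr{F}^{\bullet}R_k$ recovers the filtration levels of $\mathscr{F}$ with an error that is $o(1)$ relative to the degree. Care is required because the $\mathscr{F}_{(k)}$ need not be nested in $k$, so one argues through the defining multiplicativity $\mathscr{F}_{(k)}^{\lambda}R_a\cdot\mathscr{F}_{(k)}^{\mu}R_b\subset\mathscr{F}_{(k)}^{\lambda+\mu}R_{a+b}$ rather than via monotone convergence, using that $R^{(t)}$ contains an ample series for $t<e_{\max}(R,\mathscr{F})$ by \cite[Lemma 1.6]{BC} to control the relevant volumes.
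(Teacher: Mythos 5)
Your proposal is correct and follows essentially the same route as the paper: the paper's own (two-sentence) proof reduces the general reduced case to the irreducible one via Lemma \ref{limred} and then settles the irreducible case by combining \cite[Lemma 6]{Sz} with the barycenter formula (\ref{eqbary}), which is exactly your structure, with the convergence of the concave transforms $G_k\to G$ (your ``technical heart'') delegated to Sz\'ekelyhidi just as the paper does. Your additional verifications --- that restriction of the approximation agrees with the approximation of the restriction, the bound $G_k\le G$, and the dominated convergence step --- merely spell out details the paper leaves implicit.
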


\begin{proof}
The assertion when $X$ is irreducible follows from \cite[Lemma 6]{Sz} and the equation (\ref{eqbary}). Thus the assertion follows in the general case from Lemma \ref{limred}.
\end{proof}

 For any reduced closed subscheme $D\subset X$, replacing $L$ by $cL$ for some $c\in\mathbb{Z}_{>0}$, we may assume that $\mathscr{F}_D$ in Example \ref{dresf} satisfies Condition \ref{crucial} by the Serre vanishing theorem. Furthermore, we may assume that $\{(\mathscr{F}_{(k)})_D\}_{k\in\mathbb{Z}_{>0}}$ is an approximation to $\mathscr{F}_D$.

 We remark the following important result of Sz\'{e}kelyhidi that we make use of in the proof of Corollary \ref{cmcsck}.
\begin{thm}[\cite{Dn}, \cite{Sz}]\label{prop11}\label{ch}
Let $(X,L)$ be a polarized smooth variety with a cscK metric in $\mathrm{c}_1(L)$ 
such that $\mathrm{Aut}(X,L)$ discrete. If $\mathscr{F}$ is a good filtration of $(X,L)$, then
\[
\mathrm{DF}(\mathscr{F})\ge0.
\]
Furthermore, if $\|\mathscr{F}\|_2>0$, then $\mathrm{DF}(\mathscr{F})>0.$
\end{thm}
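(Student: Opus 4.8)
The plan is to realize $\mathrm{DF}(\mathscr{F})$ as the limit of the finite-level Chow weights $\mathrm{Chow}_r(\mathscr{F})$ and to bound each of these below by Donaldson's lower bound on the Calabi functional, thereby avoiding any approximation of $\mathscr{F}$ by finitely generated filtrations (where the uncontrolled subleading coefficient $b_1$ would intervene). First I would record the elementary link between the two invariants. Writing $w(r)=b_0r^{n+1}+b_1r^n+O(r^{n-1})$ and $\chi(X,rL)=a_0r^n+a_1r^{n-1}+O(r^{n-2})$, long division gives
\[
\frac{w(r)}{\chi(X,rL)}=\frac{b_0}{a_0}\,r+\frac{b_1a_0-b_0a_1}{a_0^2}+O(r^{-1}),
\]
so that $\mathrm{Chow}_r(\mathscr{F})=\mathrm{DF}(\mathscr{F})+O(r^{-1})$ and in particular $\mathrm{Chow}_r(\mathscr{F})\to\mathrm{DF}(\mathscr{F})$. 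Hence it suffices to bound $\mathrm{Chow}_r(\mathscr{F})$ from below with an error that vanishes in the limit, while controlling the relevant norms.

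Next, for each $r$ with $rL$ very ample, I would interpret the truncated filtration $\mathscr{F}^{\bullet}H^0(X,rL)$ as a one-parameter subgroup $\lambda_r$ of $\mathrm{GL}(H^0(X,rL))$: pick a basis adapted to $\mathscr{F}$ and assign to each basis vector its jump value as weight. Embedding $X\hookrightarrow\mathbb{P}(H^0(X,rL)^{\vee})$ by $|rL|$, the normalized Chow weight of $X$ under $\lambda_r$ agrees, up to the normalization fixed in Definition \ref{goodfil}, with $\mathrm{Chow}_r(\mathscr{F})$; moreover the $L^2$-norm $\|\lambda_r\|_2$ of the trace-free part of the generator is a Riemann sum for the standard deviation of the concave transform $G$ against Lebesgue measure on the Okounkov body, so $\|\lambda_r\|_2\to\|\mathscr{F}\|_2$ as $r\to\infty$. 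I would then invoke Donaldson's inequality in the form that, for the given cscK metric $\omega\in\mathrm{c}_1(L)$,
\[
\mathrm{Chow}_r(\mathscr{F})\ge-\bigl(\|S(\omega)-\hat S\|_{L^2(\omega)}+\epsilon_r\bigr)\,\|\lambda_r\|_2
\]
uniformly over one-parameter subgroups, with $\epsilon_r\to0$. Since $S(\omega)\equiv\hat S$, the bracket is $\epsilon_r$, and letting $r\to\infty$ (using that $\|\lambda_r\|_2$ stays bounded) yields $\mathrm{DF}(\mathscr{F})=\lim_r\mathrm{Chow}_r(\mathscr{F})\ge0$.

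For the strict inequality I would use that a cscK metric with $\mathrm{Aut}(X,L)$ discrete produces, by Donaldson's theorem on balanced embeddings, a balanced metric for $rL$ for all $r\gg0$, together with asymptotic Chow stability. The discreteness hypothesis is exactly what rules out holomorphic vector fields, so no nontrivial $\lambda_r$ fixes $X$ and the Hessian of the Kempf--Ness (balancing) functional at the balanced point is positive-definite transverse to the orbit. This upgrades Donaldson's non-negativity to a lower bound $\mathrm{Chow}_r(\mathscr{F})\ge c\,\|\lambda_r\|_2$ whose modulus $c>0$ depends only on $\|\mathscr{F}\|_2$ and not on $r$; passing to the limit and using $\|\lambda_r\|_2\to\|\mathscr{F}\|_2$ gives $\mathrm{DF}(\mathscr{F})\ge c\,\|\mathscr{F}\|_2>0$ whenever $\|\mathscr{F}\|_2>0$.

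The hard part will be the uniformity in $r$: both Donaldson's lower bound and the quantitative strict-stability estimate are proved at finite level through the moment-map geometry of Bergman and balanced metrics, and one must control the $O(r^{-1})$ errors together with the convergence $\|\lambda_r\|_2\to\|\mathscr{F}\|_2$ so that the two limits do not interfere, ensuring in particular that the positive modulus in the strict bound does not degrade as $r\to\infty$. This simultaneous control is precisely the content of \cite[Proposition 11]{Sz} building on \cite{Dn}, on which I would ultimately rely; the role of the discrete automorphism assumption is solely to convert the non-negativity statement into the definite lower bound needed for strictness.
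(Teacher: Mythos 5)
There is a genuine gap at the central step of your argument: you identify the geometric Chow weight of $X\hookrightarrow\mathbb{P}(H^0(X,rL)^{\vee})$ under the one-parameter subgroup $\lambda_r$ with the quantity $\mathrm{Chow}_r(\mathscr{F})$ of Definition \ref{goodfil}. This identification is false in general. The flat degeneration of $X$ induced by $\lambda_r$ is the test configuration attached to the \emph{finitely generated} filtration $\mathscr{F}_{(r)}$ generated by $\mathscr{F}^{\bullet}H^0(X,rL)$ (Definition \ref{appro}), and the Chow weight of $\lambda_r$ is computed from the weight asymptotics of that degeneration: it equals $\mathrm{Chow}_r(\mathscr{F}_{(r)})=2\bigl(rb_0^{(r)}/a_0-w_r(r)/\chi(X,rL)\bigr)$, where $b_0^{(r)}$ is the leading coefficient of the weight function of $\mathscr{F}_{(r)}$. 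It does not equal $\mathrm{Chow}_r(\mathscr{F})=2\bigl(rb_0/a_0-w(r)/\chi(X,rL)\bigr)$: the two differ by $2r(b_0-b_0^{(r)})/a_0$, which has no reason to vanish since $\mathscr{F}$ need not be generated in degree $r$. Consequently the finite-dimensional inequalities of \cite{Dn} (and their uniform version, \cite[Proposition 11]{Sz}) apply to $\mathrm{Chow}_r(\mathscr{F}_{(r)})$, not to the quantity $\mathrm{Chow}_r(\mathscr{F})$ whose limit is $\mathrm{DF}(\mathscr{F})$. Contrary to your framing, one cannot ``avoid any approximation of $\mathscr{F}$ by finitely generated filtrations'': the one-parameter subgroup $\lambda_r$ \emph{is} the approximation $\mathscr{F}_{(r)}$.

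The gap is bridgeable, and bridging it is precisely the technical heart of the paper's proof. After arranging Condition \ref{crucial} (replace $L$ by $cL$ so that the approximation exists), one shows $b_0^{(r)}\le b_0$ by comparing the concave transforms $G^{(r)}\le G$ on the Okounkov body, using the equation (\ref{eqbary}) and \cite[Lemma 6]{Sz}; on the other hand $w_r(r)=w(r)$ holds tautologically because $\mathscr{F}_{(r)}^{\bullet}R_r=\mathscr{F}^{\bullet}R_r$. These two facts give exactly the missing inequality $\mathrm{Chow}_r(\mathscr{F})\ge\mathrm{Chow}_r(\mathscr{F}_{(r)})$, whence $\mathrm{DF}(\mathscr{F})=\lim_{r\to\infty}\mathrm{Chow}_r(\mathscr{F})\ge\liminf_{r\to\infty}\mathrm{Chow}_r(\mathscr{F}_{(r)})=\mathrm{Chow}_\infty(\mathscr{F})$, to which \cite[Corollary 4]{Dn} and \cite[Proposition 11]{Sz} apply; since the error term is nonnegative, the bound transfers in the right direction. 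Note also that \cite[Proposition 11]{Sz} is already formulated in terms of the norm $\|\mathscr{F}\|_2$ of the filtration itself, so your separate convergence claim $\|\lambda_r\|_2\to\|\mathscr{F}\|_2$ is not needed once the reduction above is in place. Your overall strategy --- passing to Chow weights precisely because they involve only $b_0$ and $w(r)$, thereby dodging the regularization problem for the subleading coefficient $b_1$ (Remark \ref{szerem}) --- is the same as the paper's; what is missing is the one inequality that makes it work.
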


 \begin{proof}
First, we may replace $L$ by $cL$ for some $c\in\mathbb{Z}_{>0}$ and assume that $(X,L)$ and $\mathscr{F}$ satisfy Condition \ref{crucial}. Take an approximation $\{\mathscr{F}_{(r)}\}_{r\in\mathbb{Z}_{>0}}$ to $\mathscr{F}$. Then we define 
\[
 \mathrm{Chow}_\infty(\mathscr{F})=\liminf_{r\to\infty} \mathrm{Chow}_r(\mathscr{F}_{(r)})
\]
after \cite{Sz}.
 Let $w(k)=b_0k^{n+1}+b_1k^n+O(k^{n-1})$ and $w_r(k)=b_0^{(r)}k^{n+1}+O(k^n)$ be the weights of $\mathscr{F}^\bullet R_k$ and $\mathscr{F}_{(r)}^\bullet R_k$ respectively. If
$
 \mathrm{DF}(\mathscr{F})\ge \mathrm{Chow}_{\infty}(\mathscr{F})
$
 holds, then the assertion follows from \cite[Corollary 4]{Dn} and from \cite[Proposition 11]{Sz}. 
 
 For this, we show $b_0^{(r)}\le b_0$. Let $\Delta$ be the Okounkov body of $(X,L)$ and $\rho$ be the Lebesgue measure of $\Delta$. Let also $G$ (resp., $G^{(r)}$) be the concave transformation with respect to $\mathscr{F}$ (resp., $\mathscr{F}^{(r)}$). By the equation (\ref{eqbary}), \[b_0=\int_\Delta G\,d\rho,\,\,b_0^{(r)}=\int_\Delta G^{(r)}d\rho\] and $G\ge G^{(r)}$ (cf., \cite[Lemma 6]{Sz}, Remark \ref{before}). Thus we have $b_0^{(r)}\le b_0$.
 
 On the other hand, $w_r(r)=w(r)$ since $\mathscr{F}^\bullet R_r=\mathscr{F}_{(r)}^\bullet R_r$. Thus, we have
 \begin{align*}
 \mathrm{DF}(\mathscr{F})=\lim_{r\to\infty}2\left(\frac{rb_0}{a_0}-\frac{w(r)}{\chi(X,rL)}\right)\ge\liminf_{r\to\infty}2\left(\frac{rb_0^{(r)}}{a_0}-\frac{w_r(r)}{\chi(X,rL)}\right)= \mathrm{Chow}_{\infty}(\mathscr{F}).
 \end{align*}
 We complete the proof.
  \end{proof}


Finally, we introduce the nA J-functionals of filtrations as follows.
 \begin{de}\label{compatible.divisor}
 Let $(X,L)$ be a polarized deminormal scheme, $\mathscr{F}$ be a linearly bounded multiplicative filtration of $(X,L)$ and $H$ be an ample $\mathbb{Q}$-line bundle on $X$. Note that we define $(\mathcal{J}^H)^{\mathrm{NA}}(\mathscr{F})$ by using a very general divisor $D$ as the equation (\ref{jdef}) below. Taking Lemma \ref{limred} into account, we may replace $L$ by $cL$ for some $c\in\mathbb{Z}_{>0}$ and assume that $\mathscr{F}$ and $(X,L)$ satisfy Condition \ref{crucial} to define $(\mathcal{J}^H)^{\mathrm{NA}}(\mathscr{F})$. Take an approximation $\{\mathscr{F}_{(k)}\}_{k\in\mathbb{Z}_{>0}}$ to $\mathscr{F}$. We define a semiample test configuration $(\mathcal{X}^{(k)},\mathcal{L}^{(k)})$ that dominates $X_{\mathbb{A}^1}$ as follows. Let $\mathfrak{a}_{(k)}$ be the image of the following evaluation map where $t$ is the canonical coordinate of $\mathbb{A}^1$
 \[
 \bigoplus_{\lambda} t^{-\lambda}\mathscr{F}^\lambda H^0(X,kL)\otimes\mathcal{O}_{X_{\mathbb{A}^1}}(-kL\times\mathbb{A}^1)\to \mathcal{O}_{X}[t,t^{-1}].
 \]
 This is a $\mathbb{G}_m$-invariant fractional ideal of $\mathcal{O}_{X_{\mathbb{A}^1}}$ and called a {\it flag ideal} (cf., \cite[3.1]{O3}, \cite[\S2.6]{BHJ}). Then, let $\mu_k:\mathcal{X}^{(k)}\to X_{\mathbb{A}^1}$ be the blow up along $\mathfrak{a}_{(k)}$ and $$\mathcal{L}^{(k)}=\mu_k^*(L\times\mathbb{A}^1)-\mu_k^{-1}(\mathfrak{a}_{(k)}).$$
 
 For sufficiently divisible $m>0$, $mH$ is a very ample $\mathbb{Z}$-line bundle and there exists a non-empty open subset consisting of $D\in|mH|$ such that the support of $\mu_k^*D_{\mathbb{A}^1}$ contains no $\mu_k$-exceptional divisor. We take $D$ so general that 
 $\mu_k^*D'_{\mathbb{A}^1}$ contains no $\mu_k$-exceptional divisor of $\mathcal{X}^{(k)}$. We call such $D$ compatible with $\mathscr{F}_{(k)}$. Since $\mathbb{C}$ is uncountable, there exists an effective $\mathbb{Q}$-Cartier divisor $D\sim_{\mathbb{Q}} H$ such that $D$ is compatible with any $\mathscr{F}_{(k)}$
 . We call such $D$ {\it compatible} with $\{\mathscr{F}_{(k)}\}_{k\in\mathbb{Z}_{>0}}$. On the other hand, set $$(\mathcal{J}^H)^{\mathrm{NA}}(\mathscr{F}_{(k)})=(\mathcal{J}^H)^{\mathrm{NA}}(\mathcal{X}^{(k)},\mathcal{L}^{(k)}).$$
 
 Next, take $D$ a compatible divisor with $\{\mathscr{F}_{(k)}\}_{k\in\mathbb{Z}_{>0}}$. We know by Lemma \ref{limred} that there exists a constant $\tilde{b}_{0,i}$ for any irreducible component $D_i$ of $D$ such that
 \[
 \tilde{b}_{0,i}=\lim_{m\to\infty}\frac{\tilde{w}_{\mathscr{F}_{D_i}}(m)}{m^n}
 \]
(cf., Example \ref{dresf}). Then we define the {\it nA J$^H$-functional} of $\mathscr{F}$ as
 \begin{equation}
(\mathcal{J}^H)^{\mathrm{NA}}(\mathscr{F})=\frac{\tilde{b}_0a_0-b_0\tilde{a}_0}{a_0^2}.\label{jdef}
 \end{equation}
 Here, $\tilde{a}_0=\lim_{m\to\infty}\frac{\chi(D,mL)}{m^{n-1}}$ and $\tilde{b}_0=\sum m_i\tilde{b}_{0,i}$ where $D=\sum m_iD_i$.
 \end{de}

 \begin{lem}\label{lim}
 Notations as above. If $D\sim_{\mathbb{Q}}H$ is compatible with $\{\mathscr{F}_{(k)}\}_{k\in\mathbb{Z}_{>0}}$
 , then 
 \begin{equation}\label{limjlim}
 \lim_{k\to\infty}(\mathcal{J}^H)^{\mathrm{NA}}(\mathscr{F}_{(k)})=(\mathcal{J}^H)^{\mathrm{NA}}(\mathscr{F}).
 \end{equation}
 In particular, the following hold.
 \begin{enumerate}[(i)]
 \item $(\mathcal{J}^H)^{\mathrm{NA}}(\mathscr{F})$ is independent from the choice of a compatible divisor $D$.
 \item If $(X,L)$ is {\rm J}$^H$-semistable, then $(\mathcal{J}^H)^{\mathrm{NA}}(\mathscr{F})\ge0$ for any filtration.
 \end{enumerate}
 \end{lem}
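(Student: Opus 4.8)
The plan is to prove the limit formula \eqref{limjlim} by computing both sides in terms of the asymptotic weight data, and then to deduce (i) and (ii) as formal consequences. The key observation is that $(\mathcal{J}^H)^{\mathrm{NA}}(\mathscr{F})$ is defined in \eqref{jdef} purely through the coefficients $\tilde b_0 = \sum_i m_i \tilde b_{0,i}$ and $b_0$, where $\tilde b_{0,i} = \lim_{m\to\infty} \tilde w_{\mathscr{F}_{D_i}}(m)/m^n$ and $b_0 = B_{\mathscr{F}}$, so the real content is to show that each of these barycenter-type quantities is the limit of the corresponding quantity for the finite approximations $\mathscr{F}_{(k)}$. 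I would first rewrite the classical nA $J^H$-functional $(\mathcal{J}^H)^{\mathrm{NA}}(\mathcal{X}^{(k)},\mathcal{L}^{(k)})$ of the test configuration built from $\mathfrak{a}_{(k)}$ as an expression in the leading coefficients of the weight functions $w_{\mathscr{F}_{(k)}}$ and the restricted weight functions $w_{(\mathscr{F}_{(k)})_{D_i}}$. This is exactly the intersection-theoretic identity underlying the definition of $(\mathcal{J}^H)^{\mathrm{NA}}$ in Definition \ref{defkst}, translated into weights via the flag-ideal/blow-up description, and it should yield
\[
(\mathcal{J}^H)^{\mathrm{NA}}(\mathscr{F}_{(k)})=\frac{\tilde b_0^{(k)} a_0 - b_0^{(k)}\tilde a_0}{a_0^2},
\]
where $b_0^{(k)}=B_{\mathscr{F}_{(k)}}$ and $\tilde b_0^{(k)}=\sum_i m_i \lim_{m}\tilde w_{(\mathscr{F}_{(k)})_{D_i}}(m)/m^n$. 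Here it is crucial that $D$ is compatible with every $\mathscr{F}_{(k)}$, so that no $\mu_k$-exceptional divisor is picked up by $\mu_k^*D_{\mathbb{A}^1}$ and the restriction $(\mathscr{F}_{(k)})_{D_i}$ genuinely governs the intersection number of $\overline{\rho^*(H\times\mathbb{A}^1)}$ with $\overline{\gamma^*\mathcal{L}^{(k)}}^n$.

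Granting that reformulation, the limit statement reduces to two convergence claims as $k\to\infty$: first $b_0^{(k)}\to b_0$, and second $\tilde b_0^{(k)}\to \tilde b_0$, i.e.\ $\lim_m w_{(\mathscr{F}_{(k)})_{D_i}}(m)/m^n \to \lim_m w_{\mathscr{F}_{D_i}}(m)/m^n$ for each component $D_i$. The first is precisely Lemma \ref{restrr}, which gives $B_{\mathscr{F}}=\lim_k B_{\mathscr{F}_{(k)}}$. For the second I would apply the same Lemma \ref{restrr}, but to the induced data on each reduced divisor $D_i$: by the remark following Definition \ref{appro} we may arrange, after replacing $L$ by $cL$, that $\{(\mathscr{F}_{(k)})_{D_i}\}_k$ is an approximation to $\mathscr{F}_{D_i}$ and that $(D_i,L|_{D_i})$ together with $\mathscr{F}_{D_i}$ satisfies Condition \ref{crucial}. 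Then Lemma \ref{restrr} applied on $D_i$ (an $(n-1)$-dimensional polarized scheme) yields $B_{\mathscr{F}_{D_i}}=\lim_k B_{(\mathscr{F}_{(k)})_{D_i}}$, which is exactly the component-wise convergence $\tilde b_{0,i}^{(k)}\to\tilde b_{0,i}$. Summing with the multiplicities $m_i$ and inserting into the reformulated expression gives \eqref{limjlim}.

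Finally I would read off (i) and (ii). For (i): the right-hand side $(\mathcal{J}^H)^{\mathrm{NA}}(\mathscr{F})$ of \eqref{limjlim} is a limit of the left-hand quantities $(\mathcal{J}^H)^{\mathrm{NA}}(\mathscr{F}_{(k)})$, but the latter are defined via the test configurations $(\mathcal{X}^{(k)},\mathcal{L}^{(k)})$, which depend only on $\mathscr{F}$ and $k$ and not on the choice of compatible $D$ (the functional of a test configuration is intrinsic); hence the limit, equal to the quantity in \eqref{jdef}, cannot depend on $D$ either. For (ii): if $(X,L)$ is $\mathrm{J}^H$-semistable, then by definition $(\mathcal{J}^H)^{\mathrm{NA}}(\mathcal{X}^{(k)},\mathcal{L}^{(k)})=(\mathcal{J}^H)^{\mathrm{NA}}(\mathscr{F}_{(k)})\ge (\mathcal{J}^{\epsilon L})^{\mathrm{NA}}(\mathscr{F}_{(k)})\ge 0$ with $\epsilon=0$ for each finitely generated (hence test-configuration-induced) $\mathscr{F}_{(k)}$, and passing to the limit via \eqref{limjlim} gives $(\mathcal{J}^H)^{\mathrm{NA}}(\mathscr{F})\ge0$. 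I expect the main obstacle to be the first step: carefully justifying the intersection-number-to-weight reformulation on possibly reducible or non-reduced central fibers, where Okounkov bodies are unavailable and one must lean on the weight-function machinery of \S\ref{nonreduced} (Lemma \ref{limred} in particular) and on the compatibility of $D$ to control the exceptional contributions.
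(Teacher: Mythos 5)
Your proposal is correct and follows essentially the same route as the paper: the paper likewise reduces \eqref{limjlim} to the two claims you isolate — the weight-coefficient reformulation $(\mathcal{J}^H)^{\mathrm{NA}}(\mathscr{F}_{(k)})=\frac{\sum_i m_i\tilde b^{(k)}_{0,i}a_0-b^{(k)}_0\tilde a_0}{a_0^2}$ (proved via the flag-ideal blow-up and Mumford's weight/intersection-number identity, using compatibility of $D$ exactly as you indicate) and the convergences $b^{(k)}_0\to b_0$, $\tilde b^{(k)}_{0,i}\to\tilde b_{0,i}$, both obtained from Lemma \ref{restrr} applied to $X$ and to each $D_i$ after arranging that $\{(\mathscr{F}_{(k)})_{D_i}\}_k$ approximates $\mathscr{F}_{D_i}$. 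Your deductions of (i) and (ii) from the limit formula also match the paper's.
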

 
 \begin{proof}
 Note that it immediately follows from the equation (\ref{limjlim}) that (i) and (ii) hold. It suffices to show the first assertion.
 
Let $D=\sum m_iD_i$ be the irreducible decomposition and note that $D_i$ is compatible with any $\mathscr{F}_{(k)}$. Let $\tilde{w}_i^{(k)}(m)=\tilde{b}^{(k)}_{0,i}m^n+O(m^{n-1})$ be the weight function of $(\mathscr{F}^\bullet_{(k)})_{D_i}$. On the other hand, let $w^{(k)}(m)=b_0^{(k)}m^{n+1}+O(m^n)$ be the weight of $\mathscr{F}^\bullet_{(k)}H^0(X,mL)$. Then we claim the following hold.
\begin{itemize}
\item[(a)]
$
(\mathcal{J}^H)^{\mathrm{NA}}(\mathscr{F}_{(k)})=-\frac{b^{(k)}_0\tilde{a}_0-\sum m_i\tilde{b}^{(k)}_{0,i}a_0}{a_0^2},
$
\item[(b)] $\lim_{k\to\infty}b_0^{(k)}=b_0$ and $\lim_{k\to\infty}\sum m_i\tilde{b}^{(k)}_{0,i}=\tilde{b}_0$. 
\end{itemize}
Indeed, we may assume that $\{(\mathscr{F}^\bullet_{(k)})_{D_i}\}_{k\in\mathbb{Z}_{>0}}$ is an approximation to $\mathscr{F}_{D_i}$ by replacing $L$ by $lL$ for $l\in\mathbb{Z}_{>0}$ sufficiently divisible
. Thus, (b) follows from Lemma \ref{restrr}. To see (a), we may assume that $X$ is integral, $D=D_1$ and $m_1=1$
. If necessary, we assume that $\mathscr{F}^{i}H^0(X,mL)=0$ for $i>0$ by replacing $\mathscr{F}$ by $\mathscr{F}_{(N\textrm{-shift})}$ for a suitable $N\in\mathbb{Z}$. Note that $(\mathcal{J}^H)^{\mathrm{NA}}(\mathscr{F})=(\mathcal{J}^H)^{\mathrm{NA}}(\mathscr{F}_{(N\textrm{-shift})})$. Let $\mathfrak{a}_{(k)}$ be the flag ideal and $(\mathcal{X}^{(k)},\mathcal{L}^{(k)})$ be the induced test configuration as in Definition \ref{compatible.divisor}. Then, it suffices to show the following two equations. 
\begin{align*}
b^{(k)}_0&=\frac{1}{(n+1)!}(\mathcal{L}^{(k)})^{n+1},\\
\tilde{b}^{(k)}_{0,1}&=\frac{1}{n!}(\mu_k^*(D\times\mathbb{A}^1))\cdot (\mathcal{L}^{(k)})^n.
\end{align*}
 Since $\mathfrak{a}_{(k)}|_{D\times\mathbb{A}^1}$ is generated by $\sum t^{-\lambda}(\mathscr{F}^\lambda_{(k)})_{D}H^0(D,kL|_D)$ and $\mu_k^*(D\times\mathbb{A}^1)$ is the blow up along $\mathfrak{a}_{(k)}|_{D\times\mathbb{A}^1}$, the latter equation follows from \cite[Proposition 2.6]{M}. The former follows in the same way. Thus, (a) holds. We conclude that (\ref{limjlim}) holds by (a) and (b). 
 \end{proof}

\begin{rem}\label{szerem}
By Lemma \ref{lim}, we can define $(\mathcal{J}^H)^{\mathrm{NA}}(\mathscr{F})$ of a non finitely generated filtration to be $ \lim_{k\to\infty}(\mathcal{J}^H)^{\mathrm{NA}}(\mathscr{F}_{(k)})$. Note that $\mathscr{F}_{(k)}$ is good and we can define $\mathrm{DF}(\mathscr{F}_{(k)})$ for any $k$. Sz\'{e}kelyhidi defined the Futaki invariant of a non finitely generated filtration $\mathscr{F}$ to be $$\mathrm{Fut}(\mathscr{F})=\liminf_{k\to\infty}\mathrm{DF}(\mathscr{F}_{(k)})$$ in \cite{Sz}. There is a subtle but nontrivial problem that if $\mathscr{F}$ is good, we do not know whether $\mathrm{DF}(\mathscr{F})\ge\liminf_{k\to\infty}\mathrm{DF}(\mathscr{F}_{(k)})$ or not in contrast to Lemma \ref{lim}. This problem is closely related to \cite[Conjecture 2.5]{BJ2}. This is why we applied \cite[Proposition 11]{Sz} instead of [{\it loc.cit.}, Theorem 10] to deduce Theorem \ref{ch}.
\end{rem}
 \section{Proof of the main theorems}\label{3}
 \subsection{Construction of a good filtration}
 Before proving our main results, we first define CM degrees.
 \begin{de}\label{notate}
 Let $\pi:(X,L)\to C$ be a projective flat morphism from a normal variety $X$ with a $\mathbb{Q}$-line bundle $L$ to a smooth curve $C$. Fix a closed point $0\in C$ and $\mathrm{dim}\,X=n+1$. Let $C^\circ=C\setminus0$ and assume that $\pi_*\mathcal{O}_{X}\cong\mathcal{O}_{C}$. If $L$ is (semi)ample, then we call $\pi$ a {\it polarized (resp., semiample) family} over a curve $C$.
 
 Furthermore, let $\pi:(X,\Delta,L)\to C$ be a morphism such that $\pi:(X,L)\to C$ is a polarized (resp., semiample) family over $C$ and $\Delta$ is an effective horizontal $\mathbb{Q}$-divisor (i.e., any irreducible component dominates $C$). Then we call this a (log) polarized (resp., semiample) family over $C$. If $K_X+\Delta$ is further $\mathbb{Q}$-Cartier, we call this a $\mathbb{Q}$-Gorenstein family. We denote the fiber over $0$ by $(X_0,\Delta_0,L_0)$. Here, $\Delta_0$ and $L_0$ are the restrictions to $X_0$ as $\mathbb{Q}$-divisors of $\Delta$ and $L$ respectively.
 
 Let $\pi:(X,L)\to C$ and $\pi':(X',L')\to C$ be semiample families. $f:(X,L)\to (X',L')$ is called a {\it $C$-isomorphism} if $f$ is an isomorphism between $X$ and $X'$ preserving the structure morphisms to $C$ such that $L\sim_{\mathbb{Q},C}f^*L'$. Let $\pi:(X,\Delta,L)\to C$ and $\pi':(X',\Delta',L')\to C$ be log semiample families over $C$. We say that $f:(X,\Delta,L)\to (X',\Delta',L')$ is a $C$-isomorphism of log semiample families, if $f$ is a $C$-isomorphism from $(X,L)$ to $(X',L')$ as semiample families such that $f_*\Delta=\Delta'$. We define a $C^{\circ}$-isomorphism of semiample families between $(X,\Delta,L)\times_CC^\circ:=(X\times_CC^\circ,\Delta\times_CC^\circ,L|_{X\times_CC^\circ})$ and $(X',\Delta',L')\times_CC^\circ$ in the same way.

  Suppose that $\pi:(X,L)\to C$ is a semiample family over a proper smooth curve $C$. We define the {\it CM degree}
 \[
 \mathrm{CM}((X,L)/C)=2\frac{b_0a_1-b_1a_0}{a_0^2}+2(1-g(C)),
 \]
 where $g(C)$ is the genus of $C$, $\chi(X,kL)=b_0k^{n+1}+b_1k^n+O(k^{n-1})$ and $\chi(X_0,kL_0)=a_0k^{n}+a_1k^{n-1}+O(k^{n-2})$ for sufficiently divisible $k\in\mathbb{Z}_{>0}$ (cf., \cite{Oh}). This is the positive multiple of the degree of the CM line bundle (cf., \cite{FR}) and we note that if $L\sim_{\mathbb{Q},C}rL'$ then $ \mathrm{CM}((X,L)/C)= \mathrm{CM}((X,L')/C)$ for any $r\in\mathbb{Q}_{>0}$. 
 
 Furthermore, let $\Delta$ be an effective horizontal $\mathbb{Q}$-divisor on $X$. Suppose that $\chi(\Delta,mL)=\tilde{b}_0m^n+O(m^{n-1})$ and $\chi(\Delta_0,mL_0)=\tilde{a}_0m^{n-1}+O(m^{n-2})$ for sufficiently divisible $m\in\mathbb{Z}_{>0}$ (cf., \S\ref{defkst}). Here we abusively denote $L|_{D_i}$ by $L$. Then, we define
 \[
 \mathrm{CM}((X,\Delta,L)/C)=\mathrm{CM}((X,L)/C)-\frac{b_0\tilde{a}_0-\tilde{b}_0a_0}{a_0^2}
 \]
 the {\it log CM degree}. It is well-known that the following holds as \cite{O3} and \cite{W}
 \begin{equation}\label{CMform}
 \mathrm{CM}((X,\Delta,L)/C)=\frac{1}{L_0^n}\left((K_{X/C}+\Delta)\cdot L^n-\frac{n}{n+1}\frac{(K_{X_0}+\Delta_0)\cdot L_0^{n-1}}{L_0^n}L^{n+1}\right).
 \end{equation}
We remark that we can define the intersection number $(K_{X/C}+\Delta)\cdot L^n$ since $K_X+\Delta$ is Cartier in codimension 1 (cf., \cite[Lemma 3.5]{O3}). 
 \end{de}
 As DF invariants, the following hold by the equation (\ref{CMform}) and by the same argument as in \cite[\S7]{BHJ}. The proof is easy and left to the reader.
 \begin{lem}\label{fomulalem}
 Let $\pi:(X,\Delta,L)\to C$ be as above. Then the following hold.
 \begin{enumerate}
 \item For any finite morphism $f:C'\to C$ of smooth curves of degree $r$, let $(X',\Delta',L')$ be the normalization of $(X,\Delta,L)\times_C C'$. Then we have
 \[
  \mathrm{CM}((X',\Delta',L')/C')\le r \,\mathrm{CM}((X,\Delta,L)/C).
 \]
 \item  For any birational morphism $\mu:X'\to X$ from a normal variety that is isomorphic over $C^\circ$, let $\Delta'$ be the strict transformation of $\Delta$. Then
  \[
  \mathrm{CM}((X',\Delta',\mu^*L)/C)= \mathrm{CM}((X,\Delta,L)/C).
 \]
 \end{enumerate}
 \end{lem}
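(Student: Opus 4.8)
The plan is to evaluate both CM degrees through the intersection formula~(\ref{CMform}) and to compare them term by term, reducing each statement to a single relation among the top intersection numbers on the total space. The two fiber factors $L_0^n$ and $(K_{X_0}+\Delta_0)\cdot L_0^{n-1}$ appearing in~(\ref{CMform}) will be unchanged in both parts because the families agree over $C^\circ$, so that only the relative term $(K_{X/C}+\Delta)\cdot L^n$ and the self-intersection $L^{n+1}$ need to be tracked carefully.

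For part (1), let $p\colon X'\to X$ be the finite morphism of degree $r$ obtained by composing the projection $X\times_C C'\to X$ with the normalization; then $L'\sim_{\mathbb Q}p^*L$ and $\Delta'=p^*\Delta$. A general fiber of $X'\to C'$ over $t'$ is isomorphic as a polarized pair to the fiber of $X\to C$ over $f(t')$, so $(L'_0)^n=L_0^n$ and $(K_{X'_0}+\Delta'_0)\cdot(L'_0)^{n-1}=(K_{X_0}+\Delta_0)\cdot L_0^{n-1}$, while the projection formula gives $(L')^{n+1}=r\,L^{n+1}$. The one nontrivial ingredient is
\[
(K_{X'/C'}+\Delta')\cdot (L')^n\le r\,(K_{X/C}+\Delta)\cdot L^n,
\]
which I would obtain from the divisorial inequality $K_{X'/C'}+\Delta'\le p^*(K_{X/C}+\Delta)$: the relative dualizing sheaf is compatible with the flat base change along $f$, and passing to the normalization only decreases the relative canonical, the defect being effective and supported on the fibers over the branch points of $f$ (this is the base-change computation of~\cite[\S7]{BHJ}). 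Intersecting this effective defect against $(p^*L)^n$, a power of a nef class, yields the displayed inequality, and substituting into~(\ref{CMform}) gives $\mathrm{CM}((X',\Delta',L')/C')\le r\,\mathrm{CM}((X,\Delta,L)/C)$.

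For part (2), the CM degree should be invariant under a birational modification with pulled-back polarization. Since $\mu$ is an isomorphism over $C^\circ$ the general fibers are unchanged, so $L_0^n$ and $(K_{X_0}+\Delta_0)\cdot L_0^{n-1}$ are unchanged, and $(\mu^*L)^{n+1}=L^{n+1}$ by the projection formula. Writing the discrepancy decomposition $K_{X'/C}+\Delta'=\mu^*(K_{X/C}+\Delta)+\sum_i a_iE_i$ with $E_i$ the $\mu$-exceptional (hence vertical) divisors, the projection formula annihilates every exceptional term against $(\mu^*L)^n$, whence $(K_{X'/C}+\Delta')\cdot(\mu^*L)^n=(K_{X/C}+\Delta)\cdot L^n$. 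Feeding these equalities into~(\ref{CMform}) produces the asserted identity.

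The main obstacle, and the only point where an inequality rather than an equality appears, is the effectivity and verticality of the defect $p^*(K_{X/C}+\Delta)-(K_{X'/C'}+\Delta')$ in part (1). Here one must be careful that $X\times_C C'$ is non-reduced along the ramified fibers of $f$, so the comparison of canonical classes has to be run through the normalization of its reduction, combined with the ramification formula $K_{C'}=f^*K_C+\sum_i(e_i-1)[0'_i]$; and one must check that the intersection numbers are legitimate using that $K_X+\Delta$ is Cartier in codimension one, as remarked after~(\ref{CMform}). Once the defect is identified as effective and supported over the branch locus, nefness of $L'$ closes the argument.
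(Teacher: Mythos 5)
Your overall route---evaluating both sides through the intersection formula (\ref{CMform}) and importing the base-change computation of \cite[\S7]{BHJ}---is exactly the argument the paper has in mind (it leaves the proof to the reader with precisely this hint), and your part (1) is essentially correct: flat base change identifies the relative dualizing sheaf of $X\times_CC'$ over $C'$ with $q^*\omega_{X/C}$, the defect $p^*(K_{X/C}+\Delta)-(K_{X'/C'}+\Delta')$ is the conductor of the normalization, hence effective and supported on fibres over the branch points, and intersecting with powers of the ample class $L'=p^*L$ together with the projection formula for the finite degree-$r$ map $p$ gives the inequality. One caveat in your last paragraph is wrong, though harmlessly so: $X\times_CC'$ is \emph{integral}, not non-reduced. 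Flatness over $C'$ excludes vertical associated points, and the generic fibre $X_\eta\otimes_{k(C)}k(C')$ is integral because $\pi_*\mathcal{O}_X\cong\mathcal{O}_C$ and we are in characteristic $0$; it is only the \emph{fibres} over the branch points that are non-reduced. So no passage to a reduction is needed, and your second paragraph applies verbatim. Note also that $p^*(K_{X/C}+\Delta)$ does make sense as a Weil divisor precisely because $p$ is finite: pull back over the open set where $K_X+\Delta$ is Cartier and take the closure, the preimage of the complement having codimension $\ge2$.

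Part (2), however, has a genuine gap: the decomposition $K_{X'/C}+\Delta'=\mu^*(K_{X/C}+\Delta)+\sum_ia_iE_i$ presupposes that $K_X+\Delta$ is $\mathbb{Q}$-Cartier, and this is not among the hypotheses---Lemma \ref{fomulalem} is stated for arbitrary log polarized families in the sense of Definition \ref{notate}, and the remark immediately preceding it stresses that $K_X+\Delta$ is only Cartier in codimension $1$, the intersection number being defined as in \cite[Lemma 3.5]{O3}. Since $\mu$ is birational rather than finite, the closure-pullback device available in part (1) also fails (the preimage of a codimension-$2$ set can be a divisor), so the displayed decomposition simply does not exist in general; this matters, because the paper applies part (2) in Theorem \ref{CM} and elsewhere with no $\mathbb{Q}$-Gorenstein assumption. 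The repair is to push forward instead of pulling back: by the projection formula, $(K_{X'/C}+\Delta')\cdot(\mu^*L)^n=\mu_*(K_{X'/C}+\Delta')\cdot L^n$, and $\mu_*(K_{X'/C}+\Delta')=K_{X/C}+\Delta$ as Weil divisor classes, since $\mu_*K_{X'}=K_X$, $\mu_*\Delta'=\Delta$ (with $\Delta'$ the strict transform), $\mu_*\mu^*\pi^*K_C=\pi^*K_C$, and $\mu$-exceptional divisors push forward to zero. Combined with $(\mu^*L)^{n+1}=L^{n+1}$ and the invariance of the fibre terms (flatness of $X'\to C$), this yields the asserted equality in the stated generality.
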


 Odaka proposed the following conjecture.
 
 \begin{conj}[CM minimization, cf., {\cite[Conjecture 8.1]{O5}}]\label{CMconj}
 Let $\pi:(X,\Delta,L)\to C$ be a $\mathbb{Q}$-Gorenstein polarized family such that $(X_0,\Delta_0,L_0)$ is K-semistable. Then 
 \[
 \mathrm{CM}((X,\Delta,L)/C)\le \mathrm{CM}((X',\Delta',L')/C)
 \]
 for any polarized family $\pi':(X',\Delta',L')\to C$ such that there exists a $C^{\circ}$-isomorphism $f^{\circ}:(X,\Delta,L)\times_CC^{\circ}\cong (X',\Delta',L')\times_CC^{\circ}$. Furthermore, if $(X_0,\Delta_0,L_0)$ is K-stable then equality holds iff $f^{\circ}$ extends to $f:(X,\Delta,L)\cong(X',\Delta',L')$ over $C$ entirely.
 \end{conj}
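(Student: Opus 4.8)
The plan is to reduce the comparison of the two CM degrees to the nonnegativity of a single Donaldson--Futaki invariant attached to a filtration of the section ring of the central fibre, and then to establish that nonnegativity. First I would use Lemma \ref{fomulalem} to put the families in standard position: part (1) lets me replace $C$ by a finite cover without reversing the inequality, and part (2) lets me pass to a common birational model, so after shrinking $C$ around $0$ I may assume $X'$ is normal and that the two families agree away from the fibre over $0$. I would then build the filtration
\[
\mathscr{F}^{-i} H^0(X_0, mL_0) = \mathrm{Im}\bigl(H^0(X', mL' + i(X'_0 - \hat X_0)) \to H^0(X_0, mL_0)\bigr) \quad (i \ge 0)
\]
on $R = \bigoplus_m H^0(X_0, mL_0)$, verify it is linearly bounded and multiplicative, and prove the key identity
\[
\mathrm{CM}((X',\Delta',L')/C) - \mathrm{CM}((X,\Delta,L)/C) = \mathrm{DF}_{\Delta_0}(\mathscr{F}),
\]
by writing both CM degrees through the intersection formula (\ref{CMform}) and matching the intersection numbers on $X'$ against the leading coefficients of the weight $w_{\mathscr{F}}(m) = b_0 m^{n+1} + b_1 m^n + O(m^{n-1})$, just as flag ideals realise weights as intersection numbers in the proof of Lemma \ref{lim}. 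Since the schemes in the fibre over $0$, in particular $\Delta_0$, may be reducible or non-reduced, the barycenter computations here rely on \S\ref{nonreduced} rather than on Okounkov bodies. With this in hand the conjecture becomes the inequality $\mathrm{DF}_{\Delta_0}(\mathscr{F}) \ge 0$, with equality forcing $f^\circ$ to extend.

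The difficulty is that $\mathscr{F}$ need not be finitely generated, so $\mathrm{DF}_{\Delta_0}(\mathscr{F})$ is not the invariant of an honest test configuration. Approximating $\mathscr{F}$ by finitely generated filtrations $\mathscr{F}_{(k)}$ preserves the leading coefficient $b_0$ by Lemma \ref{restrr}, but the subleading $b_1$ is not known to pass to the limit, and this regularization of non-Archimedean entropy problem is where I expect the real obstacle to sit. I see two ways to extract a limit-stable lower bound. When $(X_0,L_0)$ is smooth with a cscK metric and discrete automorphisms I would bound $\mathrm{DF}(\mathscr{F})$ below by $\mathrm{Chow}_\infty(\mathscr{F})$, using $b_0^{(k)} \le b_0$ (from $G \ge G^{(k)}$ for the concave transforms), and then conclude nonnegativity from Theorem \ref{ch}. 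For a specially K-(semi)stable central fibre I would instead decompose $\mathrm{DF}_{\Delta_0}(\mathscr{F})$ as a log-twisted Ding invariant plus a non-Archimedean $\mathcal{J}^H$-functional: the $\mathcal{J}^H$ part is insensitive to singularities and is continuous under the approximation by Lemma \ref{lim}, giving ``J-minimization''; the Ding part I would control by relating the leading self-intersection of $\mathcal{L}$ to the asymptotics of the $\delta_k$-invariant, yielding ``Ding-minimization'' without appealing to the minimal model program. Summing the two gives $\mathrm{DF}_{\Delta_0}(\mathscr{F}) \ge 0$.

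For the equality clause I would use strictness: Theorem \ref{ch} gives $\mathrm{DF}(\mathscr{F}) > 0$ once $\|\mathscr{F}\|_2 > 0$ in the cscK case, while uniform special K-stability supplies a strict $\mathcal{J}^{\epsilon L}$-gap in the other case. Either way equality forces $\mathscr{F}$ to be trivial, which I would translate back into the extension of $f^\circ$ to a $C$-isomorphism. The whole strategy turns on replacing the unknown limiting behaviour of $b_1$ by an inequality that survives the approximation, and I expect the construction of that limit-stable lower bound to be the single genuine difficulty.
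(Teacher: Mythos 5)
There is a genuine gap, and it sits exactly where you predicted it would: the statement you are asked to prove assumes only that $(X_0,\Delta_0,L_0)$ is K-(semi)stable in the ordinary sense, but both of your mechanisms for bounding $\mathrm{DF}_{\Delta_0}(\mathscr{F})$ from below import strictly stronger hypotheses. Ordinary K-semistability gives $\mathrm{DF}_{\Delta_0}(\mathcal{X},\mathcal{L})\ge 0$ only for test configurations, i.e.\ for finitely generated filtrations. The filtration $\mathscr{F}$ you construct is in general not finitely generated, and passing from the approximations $\mathscr{F}_{(k)}$ to $\mathscr{F}$ preserves $b_0$ (Lemma \ref{restrr}) but not, as far as anyone knows, $b_1$; so K-semistability of the central fiber alone does not yield $\mathrm{DF}_{\Delta_0}(\mathscr{F})\ge 0$. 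This is precisely the open problem of regularization of non-Archimedean entropy (Remark \ref{szerem}, \cite[Conjecture 2.5]{BJ2}, \cite[Conjecture 1.8]{Li}). Your first workaround (bounding below by $\mathrm{Chow}_\infty$ and invoking Theorem \ref{ch}) requires $X_0$ smooth with a cscK metric and discrete automorphisms; your second (Ding plus $\mathcal{J}^H$ decomposition) requires special K-(semi)stability, i.e.\ the $\delta$-invariant condition together with J-positivity of $K_{X_0}+\Delta_0+\delta L_0$. Neither hypothesis follows from K-semistability, so what you have sketched is a proof of Corollary \ref{cmcsck} and Theorem \ref{CM2}, not of the statement as given. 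The paper is in the same position: it labels this statement a conjecture, proves only those two cases, and leaves the general case open.

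Two smaller points. First, in the log setting the identity you claim, $\mathrm{CM}((X',\Delta',L')/C)-\mathrm{CM}((X,\Delta,L)/C)=\mathrm{DF}_{\Delta_0}(\mathscr{F})$, is not what the paper establishes: because $\Delta_0$ may be non-reduced, Proposition \ref{restr} is one-sided, and Corollary \ref{logCm} only gives the inequality $\mathrm{DF}_{\Delta_0}(\mathscr{F})\le \mathrm{CM}((X',\Delta',L')/C)-\mathrm{CM}((X,\Delta,L)/C)$; fortunately this is the direction your argument needs, so it is an overstatement rather than a fatal error. Second, in the equality analysis, ``$\mathscr{F}$ is trivial'' is not by itself the conclusion one wants; in the paper the equality case is run through $\|\mathscr{F}\|_2>0$ and the concave transform (in the cscK case, via the strict inequality $L'^{n+1}<L^{n+1}$ for a nontrivial ample model), or through Lemma \ref{wx} giving $\mu^*L\sim_{\mathbb{Q},C}L'$ (in the specially K-stable case), and only then translated into the extension of $f^{\circ}$. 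If you restate your proposal as a proof of the two special cases, it matches the paper's strategy; as a proof of the conjecture itself, the lower bound on $\mathrm{DF}_{\Delta_0}(\mathscr{F})$ under mere K-semistability is missing and is not available with current technology.
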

 
 \begin{rem}
 In Conjecture \ref{CMconj}, we assume that $X$ and $X'$ are normal. If we do not assume so, $(X,\Delta,L)$ and $(X',\Delta',L')$ are not isomorphic entirely in general even if $(X_0,\Delta_0,L_0)$ is specially K-stable (cf., Definition \ref{mmt} below). This phenomenon was observed for test configurations by \cite{LX}. Thus, if $(X_0,\Delta_0,L_0)$ is K-stable but $X'$ is not normal, then $(X,\Delta,L)$ and $(X',\Delta',L')$ are conjectured to be isomorphic only in codimension 1.
 \end{rem}
 
 Conjecture \ref{CMconj} is proved in Calabi-Yau case by \cite{O2}, in K-ample case by \cite{WX} and in K-(semi)stable log Fano case by \cite{X}. Note that their results follow from Theorems \ref{spec} and \ref{CM2}.
 
 We prove Conjecture \ref{CMconj} when $(X_0,\Delta_0,L_0)$ is 
 \begin{itemize}
 \item a cscK manifold with the discrete automorphism group in \S \ref{3.2} and
 \item specially K-stable (cf., Definition \ref{mmt}) in \S\ref{mtp2}.
 \end{itemize}
  To do this, we first prove that the difference of CM degrees is the DF invariant of a certain good filtration.

 \begin{thm}\label{CM}
Let $\pi:(X,L)\to C$ and $\pi':(X',L')\to C$ be polarized families over a projective smooth curve such that there exists a $C^\circ$-isomorphism $f^\circ:(X,L)\times_CC^\circ\cong (X',L')\times_CC^\circ$. Then there exist a positive integer $k\in\mathbb{Z}_{>0}$ and a good filtration $\mathscr{F}$ of $(X_0,kL_0)$ such that its weight is 
\[
w_{\mathscr{F}}(m)=\chi(X',mk(L'+aX'_0))-\chi(X,mkL)+O(m^{n-1})
\]
for some $a\in\mathbb{Z}_{>0}$. In particular, 
 \[
 \mathrm{DF}(\mathscr{F})=  \mathrm{CM}((X',L')/C)-\mathrm{CM}((X,L)/C).
 \]
 \end{thm}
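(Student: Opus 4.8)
The plan is to realize the difference of the two CM degrees as the DF invariant of the filtration $\mathscr{F}$ sketched in \S\ref{heart}, built from the birational comparison of $X$ and $X'$. Since $f^\circ$ is an isomorphism over $C^\circ$, there is a normal variety $W$ with projective birational $C$-morphisms $p\colon W\to X$ and $q\colon W\to X'$ that are isomorphisms over $C^\circ$; write $\hat X_0$ for the strict transform of $X_0$ on $X'$ and $E=X_0'-\hat X_0\ge0$ for the remaining (exceptional) part of the central fibre of $X'$, where $X_0'=(\pi')^{*}(0)$. After replacing $L$ by $kL$ for a sufficiently divisible $k$ (so that $kL,kL'$ are Cartier, the relevant higher $R^{>0}\pi'_*$ vanish, and the $n$-dimensional linear series on $X_0$ are generated), I would set, for $i\ge0$,
\[
\mathscr{F}^{-i}H^0(X_0,mkL_0)=\mathrm{Im}\bigl(H^0(X',mkL'+iE)\to H^0(X_0,mkL_0)\bigr),
\]
the map being restriction to $\hat X_0$ followed by the birational identification $\hat X_0\sim X_0$, together with $\mathscr{F}^{\lambda}=0$ for $\lambda\ge1$. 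The nesting (1) and the multiplicativity (2) of Definition \ref{fund.def} follow from $E\ge0$ and the product of sections, while linear boundedness (3) requires choosing $a\in\mathbb{Z}_{>0}$ large enough that $L'+aX_0'$ is relatively semiample and the restriction $H^0(X',mkL'+amk\,E)\to H^0(X_0,mkL_0)$ is surjective for all $m$; this forces $\mathscr{F}^{\lambda}=R_m$ for $\lambda\le-amk$.

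The heart of the argument---and the step I expect to be hardest---is the weight computation. Using the convention $\mathscr{F}^xR_m=\mathscr{F}^{\lceil x\rceil}R_m$ and Abel summation, the weight function telescopes into the finite sum
\[
w_{\mathscr{F}}(m)=-\sum_{i\ge0}\bigl(\dim H^0(X_0,mkL_0)-\dim\mathscr{F}^{-i}H^0(X_0,mkL_0)\bigr).
\]
I would evaluate each defect $\dim R_m-\dim\mathscr{F}^{-i}R_m$ cohomologically: comparing the short exact sequences $0\to\mathcal{O}_{X'}(-X_0')\to\mathcal{O}_{X'}\to\mathcal{O}_{X_0'}\to0$ twisted by $mkL'+iE$ with the analogous restriction sequences on $X$, and invoking Serre vanishing along fibres (valid for $m\gg0$ after the choice of $k$), the cumulative sum over $i$ assembles into an Euler characteristic on the total space, yielding
\[
w_{\mathscr{F}}(m)=\chi\bigl(X',mk(L'+aX_0')\bigr)-\chi(X,mkL)+O(m^{n-1}).
\]
The genuine subtlety is that $X_0$ (and $X_0'$) may be reducible or non-reduced, so the Okounkov-body formalism of \S\ref{vol} does not apply directly; I would instead control the relevant volumes and barycenters componentwise through Proposition \ref{restr} and Lemma \ref{limred}, which is precisely why the machinery of \S\ref{nonreduced} was developed, and these also certify that the error term is genuinely $O(m^{n-1})$.

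Finally, goodness is immediate: each Euler characteristic is a Hilbert polynomial of degree $n+1$ in $m$ on the $(n+1)$-dimensional $X,X'$, so $w_{\mathscr{F}}(m)=b_0m^{n+1}+b_1m^n+O(m^{n-1})$ and $\mathrm{DF}(\mathscr{F})$ is defined. For the last equality I would compare coefficients. By flatness and the $C^\circ$-isomorphism the two families have the same fibre Hilbert polynomial, so $\chi(X_0,mkL_0)$ and $\chi(X_0',mkL_0')$ share the coefficients $a_0,a_1$ entering both $\mathrm{DF}(\mathscr{F})$ and the two CM degrees. Reading off $b_0,b_1$ from the weight formula and substituting into Definition \ref{goodfil} gives
\[
\mathrm{DF}(\mathscr{F})=\mathrm{CM}\bigl((X',L'+aX_0')/C\bigr)-\mathrm{CM}((X,L)/C),
\]
the additive constant $2(1-g(C))$ cancelling. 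It remains to note that $\mathrm{CM}$ is unchanged when $L'$ is twisted by the fibre class $X_0'=(\pi')^{*}(0)$: the normal bundle of $X_0'$ is trivial, so $(X_0')^2=0$ and $(L'+aX_0')|_{X_0'}=L'|_{X_0'}$, and expanding the intersection formula (\ref{CMform}) the two $a$-linear corrections (one to $K_{X'/C}\cdot(L')^n$, one to $(L')^{n+1}$) cancel exactly by adjunction. Hence $\mathrm{CM}((X',L'+aX_0')/C)=\mathrm{CM}((X',L')/C)$, and therefore $\mathrm{DF}(\mathscr{F})=\mathrm{CM}((X',L')/C)-\mathrm{CM}((X,L)/C)$ as claimed.
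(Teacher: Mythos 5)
Your construction of $\mathscr{F}$ is the one the paper records only as a remark (Remark \ref{remark.prf}), and it is valid only when $X_0$ is irreducible and, in effect, normal: to make sense of the map $H^0(X',mkL'+iE)\to H^0(X_0,mkL_0)$ you need the strict transform $\hat X_0\to X_0$ to be birational and you need sections restricted to $\hat X_0$ to push forward into $H^0(X_0,mkL_0)$, which uses $\nu_*\mathcal{O}_{\hat X_0}=\mathcal{O}_{X_0}$ and the $\mu$-exceptionality of $E|_{\hat X_0}$. But Theorem \ref{CM} makes no assumption whatsoever on the fiber: $X_0$ may be reducible, non-reduced or non-normal, and this generality is exactly what is used later (the theorem is applied with deminormal $X_0$ in Proposition \ref{jcm2}, with slc non-klt fibers in Theorem \ref{CM2}, and in Corollary \ref{logCm}). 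For such $X_0$ your map is not defined: a non-reduced fiber carries nilpotent data invisible to any strict transform, and for reducible or non-normal $X_0$ there is no birational identification $\hat X_0\sim X_0$ along which sections push forward. The paper's proof sidesteps all of this by defining the filtration $t$-adically on modules over an affine neighbourhood $U\ni 0$, via equation (\ref{eqdef}): $t^iF^{-i}H^0(\pi^{-1}U,mL)=H^0(\pi'^{-1}U,mL')\cap t^iH^0(\pi^{-1}U,mL)$, and then taking images in $H^0(X_0,mL_0)$; no geometry of the central fiber enters. So your starting point proves the theorem only under an extra irreducibility/normality hypothesis that the statement does not allow.

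Second, the step you yourself identify as the heart — that the total defect $\sum_{i\ge0}\bigl(\dim H^0(X_0,mkL_0)-\dim\mathscr{F}^{-i}H^0(X_0,mkL_0)\bigr)$ equals $\chi(X,mkL)-\chi\bigl(X',mk(L'+aX_0')\bigr)+O(m^{n-1})$ — is asserted rather than proved, and it is where most of the paper's work lies: (i) Claim \ref{cl1}, that $h^{i}(X',mL')=O(m^{n-1})$ for $i>0$, proved via the ample model of $(X',L')$ over $C$ and the Leray spectral sequence (this is not fiberwise Serre vanishing, since $L'$ is only semiample); (ii) realizing $\mu$ as the blow up of an ideal $\mathfrak{a}$ with $\mu^{-1}\mathfrak{a}=\mathcal{O}(-E)$ and invoking \cite[Lemma 5.4.24]{Laz} to identify $H^i(X',mL')$ with $H^i(X,mL\otimes\mathfrak{a}^m)$; and (iii) Claim \ref{cl2}, the isomorphism $H^0(\pi^{-1}U,mL)/H^0(\pi'^{-1}U,mL')\cong\bigoplus_{i\ge0}t^i\bigl(H^0(X_0,mL_0)/\mathscr{F}^{-i}H^0(X_0,mL_0)\bigr)$, proved by a diagram chase over the DVR and induction. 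Your "restriction sequences plus Serre vanishing along fibres" cannot replace these: the twists $mkL'+iE$ vary in the two parameters $m$ and $i$ simultaneously (with $i$ as large as $Cm$), so no uniform vanishing is available, and with your global-section definition it is not even clear that the filtration defects compute the cokernel of $H^0(X',\cdot)\hookrightarrow H^0(X,\cdot)$, since global and local images in $H^0(X_0,mkL_0)$ may differ. Two further corrections: Proposition \ref{restr} and Lemma \ref{limred} control only the leading $m^{n+1}$-asymptotics (barycenters) and certify nothing about $O(m^{n-1})$ error terms — in the paper they enter Corollary \ref{logCm} and Lemma \ref{lim}, not the proof of Theorem \ref{CM}; and the role of $a$ is to make $\mu^*L-(L'+aX_0')$ effective so that $H^0(X',m(L'+aX_0'))$ injects into $H^0(X,mL)$, not to make $L'+aX_0'$ relatively semiample (twisting by a fiber class never affects that). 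Your final step — reading off $\mathrm{DF}(\mathscr{F})$ and using invariance of the CM degree under $L'\mapsto L'+aX_0'$ — is correct.
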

 \begin{proof}
We may assume that there exists a morphism $\mu:X'\to X$ such that $\mu|_{X'^\circ}=f^{\circ-1}$ by Lemma \ref{fomulalem} (2). We identify $L'=\mu^*L-E$ where $E$ is an effective divisor whose support is contained in $X'_0$ by replacing $L'$ by $L'+aX'_0$ for some $a\in\mathbb{Z}_{>0}$. Then there exists a canonical injection
\[
H^0(X',mL')\subset H^0(X,mL).
\]
Due to Lemma \ref{fomulalem} (2), we may assume that $-E$ is $\mu$-ample by replacing $X'$ by the ample model $\mathrm{Proj}_X(\bigoplus_{m\ge0} \mu_*\mathcal{O}_{X'}(mL))$ of $(X',L')$ over $X$. 

We may assume that $L$ and $L'$ are $\mathbb{Z}$-Cartier by replacing $L$ by $kL$ for some $k\in\mathbb{Z}_{>0}$. We may also assume that $L$ is ample (resp., $L'$ is semiample) since $\chi(X',mL')-\chi(X,mL)$ does not change when we replace $L$ and $L'$ by $L+cX_0$ and $L'+cX'_0$ for some $c\in\mathbb{Z}_{>0}$ respectively. By the Serre vanishing theorem, $h^i(X,mL)=0$ for $i>0$ and sufficiently large $m$. On the other hand, we prove the following claim.
\begin{claim}\label{cl1}
We have
\begin{align*}
h^i(X',mL')=O(m^{n-1}),\textrm{ for }i>0,\\
\chi(X',mL')=h^0(X',mL')+O(m^{n-1})
\end{align*} for sufficiently large $m$.
\end{claim}
Indeed, take the ample model $(X'_{\mathrm{amp}},L'_{\mathrm{amp}})$ of $(X',L')$ over $C$. Recall that $L_{\mathrm{amp}}'$ is ample. Let $\xi:X'\to X'_{\mathrm{amp}}$ be the canonical morphism. Note that $\xi$ is isomorphic in codimension 1 and \cite[III Theorem 11.1]{Ha} implies that $\mathrm{Supp}\,R^j\xi_*\mathcal{O}_{X'}$ is of codimension $\ge2$ for $j>0$. By the Leray spectral sequence $H^i(X'_{\mathrm{amp}},R^j\xi_*\mathcal{O}_{X'}\otimes L'^{m}_{\mathrm{amp}})\Rightarrow H^{i+j}(X',L'^{m})$ and by the Serre vanishing theorem, we obtain $h^i(X',mL')=O(m^{n-1})$ for $i>0$ and hence $\chi(X',mL')=\chi(X'_{\mathrm{amp}},mL'_{\mathrm{amp}})+O(m^{n-1})$. Note also that $h^0(X',mL')=h^0(X'_{\mathrm{amp}},mL'_{\mathrm{amp}})$. Thus we have Claim \ref{cl1} by the Serre vanishing theorem applied to $L_{\mathrm{amp}}$.

\vspace{0.3cm}

On the other hand, let $\mathfrak{a}$ be an ideal on $X$ such that $\mu$ is the blow up of $\mathfrak{a}$ and there exists an integer $k\in\mathbb{Z}_{>0}$ such that $\mu^{-1}\mathfrak{a}=\mathcal{O}(-kE)$. Indeed, the $\mathcal{O}_X$-algebra $\bigoplus_{l\ge0}\mu_*\mathcal{O}(-lE)$ is finitely generated and hence $\mu_*\mathcal{O}(-kE)$ generates $\bigoplus_{l\ge0}\mu_*\mathcal{O}(-lE)$ for some $k\in\mathbb{Z}_{>0}$. We may assume that $k=1$ by replacing $L$ by $kL$. Then, we obtain the following exact sequence,
\[
0\to H^0(X,mL\otimes\mathfrak{a}^m)\to H^0(X,mL)\to H^0(X,mL\otimes(\mathcal{O}_X/\mathfrak{a}^m))\to H^1(X,mL\otimes\mathfrak{a}^m).
\] 
By \cite[Lemma 5.4.24]{Laz}, $H^i(X',mL')=H^i(X,mL\otimes\mathfrak{a}^m)$ for sufficiently large $m>0$. Since $h^1(X',mL')=O(m^{n-1})$ by Claim \ref{cl1}, we obtain 
\begin{align*}
\chi(X',mL')-\chi(X,mL)&=-\mathrm{dim}\,\mathrm{Im}\,(H^0(X,mL)\to H^0(X,mL\otimes(\mathcal{O}_X/\mathfrak{a}^m)))+O(m^{n-1})\\
&=-\mathrm{dim}\,H^0(X,mL\otimes(\mathcal{O}_X/\mathfrak{a}^m))+O(m^{n-1}).
\end{align*}
 Next, take a so small affine open neighborhood $U$ of $0\in C$ that there exists $t\in H^0(U,\mathcal{O}_U)$ such that $\mathcal{O}_U/t=\mathcal{O}_{C,0}/\mathfrak{m}_0$. Here, $\mathfrak{m}_0$ is the maximal ideal of $\mathcal{O}_{C,0}$. Note that the support of $\mathcal{O}_X/\mathfrak{a}$ is contained in $X_0$. Thus, 
 \begin{align*}
 \mathrm{Im}\,(H^0(X,mL)\to H^0(X,mL\otimes(\mathcal{O}_X/\mathfrak{a}^m)))&\subset \mathrm{Im}\,(H^0(\pi^{-1}(U),mL)\to H^0(X,mL\otimes(\mathcal{O}_X/\mathfrak{a}^m)))\\
 &\subset H^0(X,mL\otimes(\mathcal{O}_X/\mathfrak{a}^m)).
 \end{align*}
 Since $$h^0(X,mL\otimes(\mathcal{O}_X/\mathfrak{a}^m))=\mathrm{dim}\,\mathrm{Im}\,(H^0(X,mL)\to H^0(X,mL\otimes(\mathcal{O}_X/\mathfrak{a}^m)))+O(m^{n-1}),$$ we conclude that 
 \[
 \chi(X',mL')-\chi(X,mL)=-\mathrm{dim}\,\mathrm{Im}\,(H^0(\pi^{-1}(U),mL)\to H^0(X,mL\otimes(\mathcal{O}_X/\mathfrak{a}^m)))+O(m^{n-1}).
 \]
 Note also that $$\mathrm{Im}\,(H^0(\pi^{-1}(U),mL)\to H^0(X,mL\otimes(\mathcal{O}_X/\mathfrak{a}^m)))\cong H^0(\pi^{-1}(U),mL)/H^0((\pi\circ\mu)^{-1}(U),mL').$$
In this situation, we have just proved that $$ \chi(X',mL')-\chi(X,mL)=-\mathrm{dim}\,H^0(\pi^{-1}U,mL)/H^0(\pi'^{-1}U,mL')+O(m^{n-1})$$ for sufficiently large $m$. 
  
  Here, we define $F^{-i}H^0(\pi^{-1}U,mL)$ by 
  \begin{equation}
  t^iF^{-i}H^0(\pi^{-1}U,mL)=H^0(\pi'^{-1}U,mL')\cap t^iH^0(\pi^{-1}U,mL)\label{eqdef}
  \end{equation}
   as submodules of $H^0(X,mL)$ for $i\in\mathbb{Z}_{\ge0}$. Let $F^iH^0(\pi^{-1}U,mL)=0$ for $i>0$. Then, $F^{\bullet}H^0(\pi^{-1}U,mL)$ defines a linearly bounded multiplicative filtration of $H^0(\pi^{-1}U,mL)$. Indeed, if we take $i>0$ such that $iX_0'-E$ is effective then for $m>0$, $$t^{mi}H^0(\pi^{-1}U,mL)=H^0(\pi'^{-1}U,mL'+mE-miX'_0)\subset H^0(\pi'^{-1}U,mL').$$ On the other hand, let $\mathscr{F}^iH^0(X_0,mL_0)$ be the image of $F^iH^0(\pi^{-1}U,mL)\to H^0(X_0,mL_0)$. It is easy to see that $\mathscr{F}^{\bullet}H^0(X_0,mL_0)$ is a linearly bounded multiplicative filtration. Let $w_{\mathscr{F}}(m)$ be the weight of $\mathscr{F}^{\bullet}H^0(X_0,mL_0)$. Then it suffices to show that $$w_{\mathscr{F}}(m)=-\mathrm{dim}\,H^0(\pi^{-1}U,mL)/H^0(\pi'^{-1}U,mL')=\chi(X',mL')-\chi(X,mL)+O(m^{n-1}).$$ This equation follows from Claim \ref{cl2} below.
   \begin{claim}\label{cl2}
For $m>0$ such that $H^1(X_0,mL_0)=0$, there exists an isomorphism
\begin{align*}
H^0(\pi^{-1}U,mL)/H^0(\pi'^{-1}U,mL')\cong \bigoplus_{i\ge0} t^{i} (H^0(X_0,mL_0)/\mathscr{F}^{-i}H^0(X_0,mL_0))
\end{align*}
as $k$-vector spaces.
\end{claim} For this, note that $H^0(\pi^{-1}U,mL)\to H^0(X_0,mL_0)$ is surjective and
\[
H^0(\pi'^{-1}U,mL')= \sum_{i\ge0} t^iF^{-i}H^0(\pi^{-1}U,mL)\subset H^0(\pi^{-1}U,mL).
\]
Note also that there is a short exact sequence $$0\to \mathcal{V}'/\mathcal{V}'\cap t\mathcal{V}\to \mathcal{V}/t\mathcal{V}\to(\mathcal{V}/t\mathcal{V})/(\mathrm{Image}(\mathcal{V}'\to\mathcal{V}/t\mathcal{V}))\to0$$ if $\mathcal{V}'\subset\mathcal{V}$ is an inclusion of coherent $\mathcal{O}_{U}$-modules. Then there exists the following commutative diagram whose rows and columns are exact.
 \[
 \begin{CD}
 @.0@.0@.0@.\\
 @.@VVV @VVV @VVV@. \\
0@>>>\mathcal{V}'\cap t\mathcal{V}@>>> t\mathcal{V}@>>>t\mathcal{V}/\mathcal{V}'\cap t\mathcal{V}@>>>0 \\
@.@VVV @VVV @VVV@. \\
0@>>>\mathcal{V}' @>>>  \mathcal{V}@>>>\mathcal{V}/\mathcal{V}'@>>>0\\
@.@VVV @VVV @VVV@. \\
0@>>> \mathcal{V}'/\mathcal{V}'\cap t\mathcal{V}@>>> \mathcal{V}/t\mathcal{V}@>>>(\mathcal{V}/t\mathcal{V})/(\mathrm{Image}(\mathcal{V}'\to\mathcal{V}/t\mathcal{V}))@>>>0\\
@.@VVV @VVV @VVV@. \\
@.0@.0@.0@.
\end{CD}
\]
By the above diagram and by the equation (\ref{eqdef}) applied to the case when $\mathcal{V}=H^0(\pi^{-1}U,mL)$ and $\mathcal{V}'=H^0(\pi'^{-1}U,mL')$, we have the following isomorphism
\begin{align*}
H^0(\pi^{-1}U,mL)/H^0(\pi'^{-1}U,mL')&\cong H^0(X_0,mL_0)/\mathscr{F}^0H^0(X_0,mL_0)\\
&\oplus tH^0(\pi^{-1}U,mL)/tF^{-1}H^0(\pi^{-1}U,mL)
\end{align*}
 as $k$-vector spaces. We apply the induction hypothesis to $\sum_{i\ge1}t^{i}F^{-i}H^0(\pi^{-1}U,mL)$ a submodule of $tH^0(\pi^{-1}U,mL)$ and have $$tH^0(\pi^{-1}U,mL)/tF^{-1}H^0(\pi^{-1}U,mL)\cong\bigoplus_{i\ge1} t^{i} (H^0(X_0,mL_0)/\mathscr{F}^{-i}H^0(X_0,mL_0)).$$ Hence we obtain $$H^0(\pi^{-1}U,mL)/H^0(\pi'^{-1}U,mL')\cong\bigoplus_{i\ge0} t^{i} (H^0(X_0,mL_0)/\mathscr{F}^{-i}H^0(X_0,mL_0)).$$

 Thus, we finish the proof of Claim \ref{cl2} and obtain $w_{\mathscr{F}}(m)=\chi(X',mL')-\chi(X,mL)+O(m^{n-1})$ by \cite[Lemma 2.14]{M}. On the other hand, if $\chi(X',mL')=b'_0m^{n+1}+b'_1m^n+O(m^{n-1})$, then $w_{\mathscr{F}}(m)=(b'_0-b_0)m^{n+1}+(b'_1-b_1)m^n+O(m^{n-1})$. Therefore, $$\mathrm{DF}(\mathscr{F})=\mathrm{CM}((X',L')/C)-\mathrm{CM}((X,L)/C).$$
 We finish the proof.
 \end{proof}
 
\begin{rem}\label{remark.prf}
If $X_0$ is irreducible, $\hat{X}_0$ is the strict transformation of $X_0$ in $X'$, and $E$ is $\mu$-exceptional, then $$\mathscr{F}^{-i}H^0(X_0,mL_0)=\mathrm{Im}\,(H^0(\pi'^{-1}U,mL'+i(X'_0-\hat{X}_0))\to H^0(X_0,mL_0))$$ for $i\in\mathbb{Z}_{\ge0}$ in the above proof. If $(X,L)$ and $(X',L')$ are $\mathbb{Q}$-Fano families with K-semistable fibers, then $\mathscr{F}$ coincides with the one constructed in \cite[\S5]{BX}.
\end{rem}

 We obtain the log version of Theorem \ref{CM} as follows. 
 \begin{cor}\label{logCm}
 Let $f:(X,\Delta,L)\to C$ and $f':(X',\Delta',L')\to C$ be polarized log 
 families over a proper smooth curve $C$. Suppose that $(X_0,\Delta_0)$ is deminormal 
 and there exists a $C^\circ$-isomorphism $(X,\Delta,L)\times_{C} C^\circ\to(X',\Delta',L')\times_{C} C^\circ$. 
  
 Then there exist $k\in\mathbb{Z}_{>0}$ and a good filtration $\mathscr{F}$ of $\bigoplus_{m\ge0}H^0(X_0,mkL_0)$ such that 
 \[
 \mathrm{DF}_{\Delta_0}(\mathscr{F})\le\mathrm{CM}((X',\Delta',L')/C)-\mathrm{CM}((X,\Delta,L)/C).
 \]
 \end{cor}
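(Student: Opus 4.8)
The plan is to deduce everything from Theorem \ref{CM} applied to the underlying (non-log) families, combined with the weight-function machinery of \S\ref{nonreduced}. First I would apply Theorem \ref{CM} to $(X,L)\to C$ and $(X',L')\to C$ to produce, after replacing $L$ by a multiple, a good filtration $\mathscr{F}$ of $\bigoplus_m H^0(X_0,mkL_0)$ with $w_{\mathscr{F}}(m)=\chi(X',mL')-\chi(X,mL)+O(m^{n-1})$ and $\mathrm{DF}(\mathscr{F})=\mathrm{CM}((X',L')/C)-\mathrm{CM}((X,L)/C)$; in particular the leading coefficient of $w_{\mathscr{F}}$ is $B_0'-B_0$, where $\chi(X,mL)=B_0m^{n+1}+\cdots$ and $\chi(X',mL')=B_0'm^{n+1}+\cdots$. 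Because the families are flat over $C$ and $f^\circ$ identifies the generic fibers together with their polarizations and boundaries, the fiberwise leading coefficients coincide: writing $\chi(X_0,mL_0)=A_0m^n+\cdots$ and $\chi(\Delta_0,mL_0)=\tilde A_0 m^{n-1}+\cdots$, one has $A_0=A_0'$ and $\tilde A_0=\tilde A_0'$ (the shift $L'\rightsquigarrow L'+aX_0'$ does not affect $L_0'$). Substituting the definitions of the log CM degree (Definition \ref{notate}) and of $\mathrm{DF}_{\Delta_0}(\mathscr{F})$ (Definition \ref{goodfil}), the common term $\mathrm{DF}(\mathscr{F})$ and the contributions involving $B_0-B_0'$ cancel, and the asserted inequality collapses (after multiplying by $A_0^2>0$ and dividing by $A_0$) to the boundary estimate
\[
\sum_i c_i\, B_{\mathscr{F}_{D_i}}\ \le\ \lim_{m\to\infty}\frac{\chi(\Delta',mL')-\chi(\Delta,mL)}{m^{n}},
\]
where $\Delta_0=\sum_i c_iD_i$ and $\mathscr{F}_{D_i}$ is the restricted filtration of Example \ref{dresf}.

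It then remains to prove this boundary estimate, where everything is linear over the prime components of $\Delta$ by the convention $\chi(\Delta,\cdot)=\sum_j a_j\chi(\Gamma_j,\cdot)$. Writing $\Delta=\sum_j a_j\Gamma_j$ and letting $\Gamma_j'$ be the strict transform of $\Gamma_j$ under $\mu:X'\to X$, I would run the construction of Theorem \ref{CM} for the induced flat families $\Gamma_j\to C$ and $\Gamma_j'\to C$: the global inclusion $H^0(X',mL')\subset H^0(X,mL)$ restricts to $H^0(\Gamma_j',mL')\subset H^0(\Gamma_j,mL)$, and repeating the argument of Claims \ref{cl1} and \ref{cl2} (passing to normalizations, or to the ample model, which alters the relevant Euler characteristics only by $O(m^{n-1})$ and leaves the leading coefficients intact) yields a filtration $\mathscr{G}_j$ of $\bigoplus_m H^0(\Gamma_{j,0},mL_0)$ with
\[
w_{\mathscr{G}_j}(m)=\chi(\Gamma_j',mL')-\chi(\Gamma_j,mL)+O(m^{n-1}).
\]
Thus $B_{\mathscr{G}_j}$ is the leading coefficient of $\chi(\Gamma_j',mL')-\chi(\Gamma_j,mL)$, and summing with weights $a_j$ recovers the right-hand side above.

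The comparison is completed by two inequalities. On one hand, a section of $mL'$ on $X'$ restricts through a section on $\Gamma_j'$, so $\mathscr{F}_{\Gamma_{j,0}}^{\lambda}\subseteq\mathscr{G}_j^{\lambda}$ for all $\lambda$; choosing a common linear bound $C$ and using the integral formula (\ref{eqint}), this gives $w_{\mathscr{F}_{\Gamma_{j,0}}}(m)\le w_{\mathscr{G}_j}(m)$ and hence $B_{\mathscr{F}_{\Gamma_{j,0}}}\le B_{\mathscr{G}_j}$. On the other hand, the fiber $\Gamma_{j,0}=\sum_i m_{ij}D_i$ is reducible and non-reduced, and Proposition \ref{restr} applied to $\mathscr{F}_{\Gamma_{j,0}}$ gives $B_{\mathscr{F}_{\Gamma_{j,0}}}\ge\sum_i m_{ij}B_{\mathscr{F}_{D_i}}$, since $(\mathscr{F}_{\Gamma_{j,0}})_{D_i,\mathrm{red}}=\mathscr{F}_{D_i}$. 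Combining these, and using $c_i=\sum_j a_j m_{ij}$, I obtain
\[
\sum_i c_i B_{\mathscr{F}_{D_i}}=\sum_j a_j\sum_i m_{ij}B_{\mathscr{F}_{D_i}}\le \sum_j a_j B_{\mathscr{F}_{\Gamma_{j,0}}}\le\sum_j a_j B_{\mathscr{G}_j},
\]
which is exactly the required boundary estimate.

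The main obstacle I expect lies in the construction of $\mathscr{G}_j$: Theorem \ref{CM} is stated for families with normal total space and computes weights via Euler characteristics of a variety, whereas the boundary components $\Gamma_j$ need not be normal and the boundary fibers $\Gamma_{j,0}$ are genuinely reducible and non-reduced. One must therefore either normalize $\Gamma_j$ and $\Gamma_j'$ (verifying that the leading coefficient of $\chi(\Gamma_j',mL')-\chi(\Gamma_j,mL)$ is preserved) or re-derive the cohomological vanishing of Claim \ref{cl1} and the weight identity of Claim \ref{cl2} directly in this singular setting. It is precisely in passing from these equalities to the final bound that Proposition \ref{restr} enters, the loss being the multiplicities $m_{ij}$ of the non-reduced components of $\Gamma_{j,0}$; this is also why the conclusion is an inequality rather than an equality.
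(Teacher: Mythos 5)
Your proposal is correct and follows essentially the same route as the paper's: the paper also takes the filtration $\mathscr{F}$ from Theorem \ref{CM}, reduces the corollary to a per-component boundary estimate (packaged as Lemma \ref{cl3}), constructs on each scheme-theoretic boundary fiber a comparison filtration $\mathscr{G}$ from the inclusion of sections through the strict transform, and then combines the inclusion $\mathscr{F}_{\mathrm{restricted}}\subset\mathscr{G}$ with Proposition \ref{restr} exactly as you do. The technical obstacle you flag (Claims \ref{cl1} and \ref{cl2} for the possibly non-normal divisors $\Gamma_j$) is resolved in the paper by your second suggested option: re-deriving the asymptotic vanishing directly in the singular setting via \cite[Lemma 2.7]{M} and \cite[Lemma 5.4.24]{Laz} rather than normalizing.
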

 
 \begin{proof}
 As the proof of Theorem \ref{CM}, we may replace $(X',\Delta',L')$ by semiample one and may assume that there exists a birational contraction $\mu:X'\to X$ and $L'=\mu^*L-E$ where $E$ is an effective divisor such that $-E$ is $\mu$-ample. Then we take $\mathscr{F}$ as a filtration appears in the proof of Theorem \ref{CM}. We may assume that $\mathscr{F}$ is a filtration of $\bigoplus_{m\ge0} H^0(X_0,mL_0)$ by replacing $L$ by $kL$. It is easy to see that the assertion follows from Lemma \ref{cl3} below applied to an irreducible component $D$ of $\Delta$.
 \end{proof}
  \begin{lem}\label{cl3}
  Let $D$ be a horizontal prime divisor on $X$ such that $D_0=\sum m_i \Gamma_i$ is the irreducible decomposition and $D'\subset X'$ be the strict transformation of $D$. Then $$\frac{1}{n!}(D'\cdot L'^{n}-D\cdot L^n)\ge \sum m_iB_{\mathscr{F}_{1,i}},$$ where $\mathscr{F}_{1,i}$ is the induced filtration on $\bigoplus H^0(\Gamma_i,mL|_{\Gamma_i})$ by $\mathscr{F}$ with the weight function $w_{\mathscr{F}_{1,i}}(m)$ 
  such that $\lim_{m\to\infty}\frac{w_{\mathscr{F}_{1,i}}(m)}{m^n}=B_{\mathscr{F}_{1,i}}$.
  \end{lem}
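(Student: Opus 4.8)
The plan is to reduce the desired inequality to a single comparison between the barycenter of the restriction of $\mathscr{F}$ to the fibre $D_0$ and an intersection number, and then to obtain that comparison by rerunning the proof of Theorem \ref{CM} with the family $D\to C$ in place of $X\to C$. First, since $D$ and $D'$ are $n$-dimensional, asymptotic Riemann--Roch gives
\[
\frac{1}{n!}\left(D'\cdot L'^n-D\cdot L^n\right)=\lim_{m\to\infty}\frac{\chi(D',mL'|_{D'})-\chi(D,mL|_D)}{m^n}.
\]
Let $\mathscr{F}_{D_0}$ denote the restriction of $\mathscr{F}$ to the scheme-theoretic fibre $D_0$ in the sense of Example \ref{dresf}. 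Its further restriction to the reduced component $\Gamma_i$ is exactly $\mathscr{F}_{1,i}$, so Proposition \ref{restr} applied to $D_0=\sum_i m_i\Gamma_i$ yields $\sum_i m_iB_{\mathscr{F}_{1,i}}\le B_{\mathscr{F}_{D_0}}$, where the limits defining the $B_{\mathscr{F}_{1,i}}$ exist because each $\Gamma_i$ is integral (Okounkov-body formula (\ref{eqbary})). Hence it suffices to prove $B_{\mathscr{F}_{D_0}}\le \frac{1}{n!}(D'\cdot L'^n-D\cdot L^n)$.

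To establish this I would transport the construction of Theorem \ref{CM} to $D$. Restricting $\mu$ gives a birational morphism $\mu_D\colon D'\to D$ with $L'|_{D'}=\mu_D^*(L|_D)-E|_{D'}$, and $E|_{D'}$ is effective and does not contain $D'$, since $D'$ is horizontal while $E$ is supported on $X'_0$. Restricting the inclusion $H^0(\pi'^{-1}U,mL')\subset H^0(\pi^{-1}U,mL)$ to $D$ and repeating the purely homological argument of Claim \ref{cl2} — which uses only the $\mathcal{O}_{C,0}$-module structure of the spaces of sections and is insensitive to singularities — produces a filtration $\mathscr{G}$ of $\bigoplus_m H^0(D_0,mL_0)$ whose weight obeys $w_{\mathscr{G}}(m)=\chi(D',mL'|_{D'})-\chi(D,mL|_D)+O(m^{n-1})$, so that $B_{\mathscr{G}}=\frac{1}{n!}(D'\cdot L'^n-D\cdot L^n)$. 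Because $\mathscr{F}_{D_0}$ is obtained by restricting the $X$-filtration $F^\bullet$ first to $X_0$ and then to $D_0$, whereas $\mathscr{G}$ is built from the intrinsic $D$-filtration, the restriction to $D$ of any section in $F^\lambda$ lands in the intrinsic filtration; this gives the inclusion $\mathscr{F}_{D_0}^\lambda\subset\mathscr{G}^\lambda$ for every $\lambda$, and the integral formula (\ref{eqint}) then forces $w_{\mathscr{F}_{D_0}}(m)\le w_{\mathscr{G}}(m)$. Passing to $\limsup_m(\,\cdot\,)/m^n$ yields $B_{\mathscr{F}_{D_0}}\le B_{\mathscr{G}}$, which is the required bound.

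The genuine difficulty lies in the non-normality of $D$ and the possible non-reducedness of $D_0$. The divisorial analogue of Claim \ref{cl1}, namely $h^i(D',mL'|_{D'})=O(m^{n-1})$ for $i>0$, can no longer be deduced from normality of the total space; I would recover it by passing to the ample model of $(D',L'|_{D'})$ over $C$ and combining Serre vanishing with the codimension bound on the higher direct images, and I would similarly control the defect $\mu_{D\ast}\mathcal{O}_{D'}/\mathcal{O}_D$, which is supported in codimension $\ge1$ on $D$ and hence contributes only $O(m^{n-1})$. The point throughout is that every singular correction is of lower order and therefore does not affect the leading coefficient $B_{\mathscr{G}}$. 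Finally, the non-reducedness of $D_0$ is precisely what makes the conclusion an inequality rather than an equality: it is absorbed by Proposition \ref{restr}, while the inclusion $\mathscr{F}_{D_0}^\lambda\subset\mathscr{G}^\lambda$ explains why only the coarser restriction $\mathscr{F}_{D_0}$, rather than the filtration $\mathscr{G}$ intrinsic to the geometry of $D'$, enters the final estimate.
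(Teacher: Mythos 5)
Your proposal is correct and takes essentially the same route as the paper's own proof: your reduction via Proposition \ref{restr}, your intrinsic filtration $\mathscr{G}$ on $D$ obtained by rerunning Claim \ref{cl2} so that $B_{\mathscr{G}}=\frac{1}{n!}(D'\cdot L'^{n}-D\cdot L^{n})$, and the termwise inclusion $\mathscr{F}_{D_0}^{\lambda}\subset\mathscr{G}^{\lambda}$ (your $\mathscr{F}_{D_0}$ is the paper's $\mathscr{F}_{1,*}$ on the scheme-theoretic fiber $D_0^*$) reproduce exactly the paper's chain of inequalities (\ref{equa1}), (\ref{equa2}), (\ref{equa3}). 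The only divergence is in bookkeeping: where you re-derive the $O(m^{n-1})$ cohomology estimates through the ample model of $(D',L'|_{D'})$ over $C$ and the defect $\mu_{D*}\mathcal{O}_{D'}/\mathcal{O}_D$, the paper simply cites \cite[Lemma 2.7]{M} and \cite[Lemma 5.4.24]{Laz}, which is immaterial to the structure of the argument.
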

 \begin{proof}
Let $U$ be an affine open neighborhood of $0\in C$ and $\mathfrak{a}\subset\mathcal{O}_X$ be an ideal as in proof of Theorem \ref{CM}. Instead of Claim \ref{cl1}, we see that 
\begin{align*}
\chi(D',mL')&=h^0(D',mL')+O(m^{n-1})\\
\chi(D,mL)&=h^0(D,mL)+O(m^{n-1})=h^0(D',m\mu^*L)+O(m^{n-1})
\end{align*}
 by \cite[Lemma 2.7]{M}. Note that $D'$ is the blow up of $D$ along $\mathfrak{a}|_{D}$. Hence, $H^i(D',mL')=H^i(D,mL\otimes\mathfrak{a}|_D^m)$ for sufficiently large $m>0$ by \cite[Lemma 5.4.24]{Laz}. Thus, $$\chi(D',mL')-\chi(D,mL)=-\mathrm{dim}\,(H^0(D|_{U},mL)/H^0(D|_{U},mL\otimes\mathfrak{a}^m|_{D|_{U}}))+O(m^{n-1})$$ holds as the proof of Theorem \ref{CM}. Here, $D|_{U}=\pi^{-1}(U)\cap D$. 

 Define a filtration $G^{\bullet}$ on $\bigoplus H^0(D|_U,mL|_{D|_U})$ by $$t^iG^{-i}H^0(D|_U,mL)=H^0(D'|_U,mL')\cap t^iH^0(D|_U,mL)$$ if $i\ge0$. Otherwise, $G^{-i}H^0(D|_U,mL)=0$. Then, let $\mathscr{G}^iH^0(D^*_0,mL_0)$ be the image of $G^iH^0(D|_U,mL)\to H^0(D^*_0,mL_0)$. Here, $D^*_0$ is the scheme theoretic fiber of $D$ over $0$. Let $\mathscr{F}_{1,*}$ be the induced filtration of $\bigoplus H^0(D^*_0,mL|_{D^*_0})$ by $\mathscr{F}$
 . By Proposition \ref{restr}, 
 \begin{equation}
  B_{\mathscr{F}_{1,*}}=\limsup_{m\to\infty}\frac{w_{\mathscr{F}_{1,*}}(m) }{m^n}\ge \sum m_iB_{\mathscr{F}_{1,i}}.\label{equa1}
 \end{equation}
 It is easy to see that $\mathscr{F}_{1,*}^iH^0(D^*_0,mL_0)\subset\mathscr{G}^iH^0(D^*_0,mL_0)$. Therefore, 
  \begin{equation}
B_{\mathscr{G}}=\limsup_{m\to\infty}\frac{ w_{\mathscr{G}}(m)}{m^n}\ge B_{\mathscr{F}_{1,*}}\label{equa2}.
 \end{equation}
On the other hand, we see that 
   \begin{equation}
 w_{\mathscr{G}}(m)=-\mathrm{dim}\,(H^0(D|_{U},mL)/H^0(D|_{U},mL\otimes\mathfrak{a}^m|_{D|_{U}}))=\chi(D',mL')-\chi(D,mL)+O(m^{n-1})\label{equa3}
 \end{equation}
 by Claim \ref{cl2}. Thus, we obtain the assertion by (\ref{equa1}), (\ref{equa2}) and (\ref{equa3}).
 \end{proof}
 

\subsection{Minimizing CM degree for cscK manifolds with the discrete automorphism group}\label{3.2}

 Due to Theorem \ref{CM}, we can prove Conjecture \ref{CMconj} for certain cscK manifolds. The following is Theorem \ref{mt1}.
 
 \begin{cor}\label{cmcsck}
 Let $\pi:(X,L)\to C$ and $\pi':(X',L')\to C$ be polarized families over a proper smooth curve $C$ such that there exists a $C^\circ$-isomorphism $f^\circ:(X,L)\times_CC^\circ\cong (X',L')\times_CC^\circ$. If $X_0$ is smooth and $(X_0,L_0)$ has the discrete automorphism group and a cscK metric, then 
 \[
 \mathrm{CM}((X',L')/C)\ge\mathrm{CM}((X,L)/C).
 \]
Equality holds iff the birational map $f^\circ$ can be extended to a $C$-isomorphism $(X,L)\cong (X',L')$.
 \end{cor}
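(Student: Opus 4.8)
The plan is to read the difference of the two CM degrees off of Theorem \ref{CM} and then invoke the positivity of the Donaldson--Futaki invariant for cscK manifolds from Theorem \ref{ch}. Concretely, by Theorem \ref{CM} there are $k\in\mathbb{Z}_{>0}$ and a good filtration $\mathscr{F}$ of $(X_0,kL_0)$ with
\[
\mathrm{DF}(\mathscr{F})=\mathrm{CM}((X',L')/C)-\mathrm{CM}((X,L)/C).
\]
Since $X_0$ is smooth and $(X_0,L_0)$ carries a cscK metric with $\mathrm{Aut}(X_0,L_0)$ discrete, Theorem \ref{ch} yields $\mathrm{DF}(\mathscr{F})\ge0$, which is exactly the asserted inequality $\mathrm{CM}((X',L')/C)\ge\mathrm{CM}((X,L)/C)$.

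For the equality characterization, one direction is formal: if $f^\circ$ extends to a $C$-isomorphism $(X,L)\cong(X',L')$, then the two families are isomorphic and hence have equal CM degrees. For the converse I would suppose equality, so $\mathrm{DF}(\mathscr{F})=0$; then the strict part of Theorem \ref{ch} forces $\|\mathscr{F}\|_2=0$, i.e. the concave transform $G$ on the Okounkov body of $(X_0,L_0)$ is $\rho$-almost everywhere constant, equivalently the associated Duistermaat--Heckman measure is a single Dirac mass. I would then recall the geometry of $\mathscr{F}$ from the proof of Theorem \ref{CM} (cf. Remark \ref{remark.prf}): after the reductions there we have a birational contraction $\mu:X'\to X$, an isomorphism over $C^\circ$, with $L'=\mu^*L-E$ for an effective $\mu$-exceptional divisor $E$ supported on $X'_0$ such that $-E$ is $\mu$-ample, and the jumps of $\mathscr{F}$ record the growth of $H^0(\pi'^{-1}U,mL'+i(X'_0-\hat{X}_0))$. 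The target is to deduce $E=0$, since then $\mathcal{O}_{X'}$ is $\mu$-ample, $\mu$ is finite, normality of $X$ makes $\mu$ an isomorphism, and $L'\sim_{\mathbb{Q},C}\mu^*L$ (the discrepancy being a multiple of the fiber class, absorbed into $\sim_{\mathbb{Q},C}$), so $f^\circ=\mu^{-1}$ extends.

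The hard part is precisely this implication $\|\mathscr{F}\|_2=0\Rightarrow E=0$, i.e. promoting the measure-theoretic triviality of $\mathscr{F}$ to an exact statement about $\mu$. The plan is to argue by contradiction using the $\mu$-ampleness of $-E$: if $E\neq0$, then the linear series $mL'+i(X'_0-\hat{X}_0)$ grow genuinely with $i$, so the volumes $\mathrm{vol}(R^{(t)})$ vary over a nondegenerate interval and $G$ cannot be constant. At the technical level this amounts to showing that the second moment $\int_\Delta G^2\,d\rho$ strictly exceeds $\frac{1}{\mathrm{vol}(\Delta)}(\int_\Delta G\,d\rho)^2$ whenever $E\neq0$, which I expect to follow from a Hodge index / negativity argument for the $\mu$-exceptional class $E$ (the self-intersection of a nonzero $\mu$-exceptional, $\mu$-ample divisor being strictly negative), in the spirit of \cite{WX}.

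I expect the real technical friction to lie in transferring the $L^2$-quantity $\|\mathscr{F}\|_2$, defined through $G$ on the Okounkov body of $X_0$, into intersection numbers on $X'$; this requires passing through the finitely generated approximations $\{\mathscr{F}_{(k)}\}$ (as in Definition \ref{appro} and Lemma \ref{restrr}) so that barycenters and second moments can be identified with the appropriate intersection numbers of $\overline{\mathcal{L}}^{(k)}$ and $E$, and then checking that the non-atomicity survives in the limit. Once $E=0$ is secured, the extension of $f^\circ$ and the verification that it is a $C$-isomorphism are routine, as indicated above.
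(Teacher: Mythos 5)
Your inequality part is exactly the paper's argument (Theorem \ref{CM} plus Theorem \ref{ch}), and your overall reduction of the equality case to the implication ``$\|\mathscr{F}\|_2=0\Rightarrow E=0$'' is also the paper's skeleton. However, that implication is precisely where your proof stops: you state that you \emph{expect} it to follow from a Hodge-index bound on second moments, transported to intersection numbers through the approximations $\{\mathscr{F}_{(k)}\}$, and you flag the transfer itself as unresolved. This is a genuine gap, and the route you propose has two concrete problems. First, a Hodge-index/negativity argument on $E$ controls the \emph{barycenter} of the filtration, i.e.\ the $(n+1)$-fold intersection numbers $L'^{n+1}-L^{n+1}$; it says nothing by itself about the variance $\int_\Delta G^2d\rho-\frac{1}{\mathrm{vol}(\Delta)}(\int_\Delta G\,d\rho)^2$, since a Dirac mass located at a nonzero point has zero variance but nonzero barycenter. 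To rule that out you must first pin down \emph{where} the putative point mass sits, and your sketch never does this. Second, the paper provides no intersection-theoretic formula for the second moment of this (a priori non--finitely generated) filtration, and passing such quantities through the approximation $\{\mathscr{F}_{(k)}\}$ is exactly the kind of limiting issue the paper is careful to avoid (cf.\ Remark \ref{szerem}); your plan would have to build that machinery from scratch.

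The paper's own proof shows the step you left open can be done with first moments only, no second-moment formula needed. By \cite[Lemma 7]{Sz} it suffices to show $G|_{\Delta^\circ}$ is non-constant when $E\neq0$. Suppose it is constant. Since $\mathscr{F}^{\lambda}=0$ for $\lambda>0$ we have $G\le0$; on the other hand $\mathscr{F}^0H^0(X_0,mL_0)\neq0$ for large $m$, because it contains the image of $H^0(X',mL')\to H^0(\hat{X}_0,mL'|_{\hat{X}_0})$, whose kernel $H^0(X',mL'-\hat{X}_0)$ is a proper subspace as $L'$ is semiample; hence $G=0$ at some point of $\Delta$. Concavity then forces the constant to be $0$, so $b_0=\int_\Delta G\,d\rho=0$, which by Theorem \ref{CM} means $L'^{n+1}=L^{n+1}$. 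But with $-E$ $\mu$-ample and $E\neq0$ effective, setting $L_s=\mu^*L-sE$ one computes $\frac{d}{ds}L_s^{n+1}=-(n+1)E\cdot L_s^{n}<0$ for $0<s<1$, so $L'^{n+1}<L^{n+1}$, a contradiction. In other words, the normalization ``$\sup G=0$'' plus the strict monotonicity of $s\mapsto L_s^{n+1}$ replaces your entire variance/approximation program; this is the idea missing from your proposal.
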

 \begin{proof}
We use the notations in the proof of Theorem \ref{CM}. That is, we assume that there exists a birational morphism $\mu:X'\to X$ and $L'$ is semiample. By Theorem \ref{prop11}, to see the last assertion, it suffices to show that $\| \mathscr{F}\|_2>0$ when we assume that $\mu^*L\not\sim_{C,\mathbb{Q}}L'$, i.e., the ample model of $(X',L')$ is not isomorphic to $(X,L)$. Let $G$ be the concave transformation (\S\ref{vol}) with respect to $\mathscr{F}$. On $\Delta^\circ$, which is the interior of the Okounkov body $\Delta$ of $(X_0,L_0)$, note that $G$ is continuous. Then it suffices to show $G|_{\Delta^\circ}$ is not a constant function by \cite[Lemma 7]{Sz}. Assume the contrary and we deduce a contradiction.

 We may assume that $E\ne0$ and is $\mu$-exceptional since $X_0$ is irreducible. We may further assume that $-E$ is $\mu$-ample as Theorem \ref{CM}. Let $L_s=\mu^*L-sE$ for $0\le s\le1$ and $f(s)=L_s^{n+1}-L^{n+1}$. Then $f(0)=0$ and
 \[
 \frac{d}{ds}f(s)=-(n+1)E\cdot L_s^{n}\le0.
 \] $\frac{d}{ds}f(s)<0$ for $0<s<1$ since $L_s$ is ample and $E\ne0$ is effective. Hence, 
 \begin{equation}
f(1)=L'^{n+1}-L^{n+1}<0.\label{eqlemma}
 \end{equation} We claim that then $G|_{\Delta^\circ}\equiv0$. Indeed, 
 \[
 \mathscr{F}^0H^0(X_0,mL_0)=\mathrm{Im}\,(H^0(\pi^{-1}U',mL')\to H^0(X_0,mL_0))
 \]
  holds now. This contains the image of $H^0(X',mL')\to H^0(\hat{X}_0,mL'|_{\hat{X}_0})$ and the kernel of this map is $H^0(X',mL'-\hat{X}_0)$ where $\hat{X}_0$ is the strict transformation of $X_0$. Since $L'$ is semiample, $H^0(X',mL'-\hat{X}_0)\ne H^0(X',mL')$ for sufficiently large $m$ and hence there exists at least one rational point $p\in\Delta$ such that $G(p)=0$. Note that $G\le0$. By the concavity of $G$, for any small $\epsilon>0$, there exists a point $q\in\Delta^\circ$ such that $G(q)>-\epsilon$. 
  Since we assume that $G|_{\Delta^\circ}$ is constant, $G|_{\Delta^\circ}\equiv0$. From Theorem \ref{CM}, it follows that $L'^{n+1}=L^{n+1}$. This contradicts to the inequality (\ref{eqlemma}).
 \end{proof}

 \subsection{CM minimization for specially K-stable varieties}\label{mtp2}
 We introduce special K-stability as follows.
  \begin{de}[Specially K-stable varieties]\label{mmt}
 Let $(X,\Delta,L)$ be a polarized slc pair. If there exists $\epsilon>0$ such that $\delta(X,\Delta,L)-\epsilon\in\mathbb{Q}$ and $(X,\Delta,L)$ is uniformly $\mathrm{J}^{K_{X}+\Delta+(\delta(X,\Delta,L)-\epsilon)L}$-stable and $K_{X}+\Delta+\delta(X,\Delta,L)L$ is ample, we say $(X,\Delta,L)$ is {\it specially} K-{\it stable}.
 
 Similarly, if $(X,\Delta,L)$ is $\mathrm{J}^{K_{X}+\Delta+(\delta(X,\Delta,L)+\epsilon)L}$-semistable for any $\epsilon\ge0$ such that $\delta(X,\Delta,L)+\epsilon\in\mathbb{Q}$ and $K_{X}+\Delta+\delta(X,\Delta,L)L$ is nef, then we say $(X,\Delta,L)$ is {\it specially} K-{\it semistable}.
 \end{de}
 \begin{rem}
We see that special K-stability implies uniform K-stability in Corollary \ref{corf1} below.
 
 On the other hand, K-stable varieties are not specially K-stable in general. Indeed, let $(X,-\epsilon K_X+L)$ be a polarized ruled surface where $L$ is the fiber class for $\epsilon>0$. It is well-known that $(X,-\epsilon K_X+L)$ is K-stable when a corresponding vector bundle is stable by \cite{A}. Furthermore, $\lim_{\epsilon\to0}\delta(X,-\epsilon K_X+L)=2$ by \cite[Theorem D]{Hat}. However, $K_X+\delta(X,-\epsilon K_X+L)(-\epsilon K_X+L)$ is not nef for sufficiently small $\epsilon>0$. 
 \end{rem}

The following are known to be specially K-stable.
 \begin{thm}\label{spec}
 Suppose that a polarized lc pair $(X,\Delta,L)$ satisfies one of the following.
 \begin{enumerate}
 \item (K-ample, Calabi-Yau and log Fano cases, \cite{O}, \cite{BlJ}, \cite{LXZ}) There exists a constant $c\in\mathbb{Q}$ such that $K_X+\Delta\sim_{\mathbb{Q}} cL$ and $(X,\Delta,L)$ is K-stable.
 \item (Klt minimal models, \cite{Hat2}) $(X,\Delta)$ is klt, $K_X+\Delta$ is nef and $L=K_X+\Delta+\epsilon H$ for $H$ is ample and sufficiently small $\epsilon>0$.
  \item (K-ample fibrations over curves, \cite{Hat2}) There exists a morphism $f:X\to C$ such that $(C,A)$ is a polarized smooth curve, $f_*\mathcal{O}_X\cong\mathcal{O}_C$, $K_{X}+\Delta$ is ample, and $L=\epsilon (K_X+\Delta)+f^*A$ for sufficiently small $\epsilon>0$.
 \item (Uniformly adiabatically K-stable klt-trivial fibrations over curves, \cite{Hat}) There exists a polarized klt-trivial fibration $f:(X,\Delta,H)\to (C,A)$ such that $(C,A)$ is a polarized smooth curve, $(X,\Delta)$ is klt, $(C,B,M,A)$ is K-stable and $L=f^*A+\epsilon H$ for $H$ and for sufficiently small $\epsilon>0$. Here, $B$ is the discriminant and $M$ is the moduli divisor.
 \end{enumerate}
 Then $(X,\Delta,L)$ is specially K-stable.
 \end{thm}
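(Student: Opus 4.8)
The plan is to verify, in each of the four cases, the three defining conditions of special K-stability in Definition \ref{mmt}: that $\delta(X,\Delta,L)-\epsilon$ can be made rational for a suitable $\epsilon>0$, that $K_{X}+\Delta+\delta(X,\Delta,L)L$ is ample, and --- the essential point --- that $(X,\Delta,L)$ is uniformly $\mathrm{J}^{K_{X}+\Delta+(\delta(X,\Delta,L)-\epsilon)L}$-stable. The rationality condition is harmless: once an $\epsilon>0$ is found for which the J-stability holds, it may be shrunk, within the open set of admissible twists, to a rational value keeping $\delta-\epsilon$ rational. Thus the work splits into an ampleness computation and a uniform J-stability statement, and I would treat case (1) by hand and reduce cases (2)--(4) to the positivity results already proved in \cite{Hat2} and \cite{Hat}.

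First I would dispose of case (1), where $K_{X}+\Delta\sim_{\mathbb{Q}}cL$, so that both the twist and $K_{X}+\Delta+\delta L$ are $\mathbb{Q}$-linearly equivalent to multiples of $L$. The key numerical input is $c+\delta(X,\Delta,L)>0$: for $c>0$ (K-ample) this is automatic; for $c=0$ (Calabi--Yau) it reduces to $\delta>0$, which holds since $(X,\Delta)$ is klt by \cite{BlJ}; and for $c<0$ (log Fano), rescaling to $-(K_X+\Delta)$ and using the K-stability criterion $\delta(X,\Delta,-(K_X+\Delta))>1$ of \cite{LXZ}, \cite{BlJ} together with the scaling $\delta(X,\Delta,tL)=t^{-1}\delta(X,\Delta,L)$ gives $\delta>-c$. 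Hence $K_{X}+\Delta+\delta L\sim_{\mathbb{Q}}(c+\delta)L$ is ample, and the twist is $\sim_{\mathbb{Q}}bL$ with $b=c+\delta-\epsilon>0$ for small $\epsilon$. Since $(\mathcal{J}^{bL})^{\mathrm{NA}}=b\,(\mathcal{J}^{L})^{\mathrm{NA}}$ and $(\mathcal{J}^{L})^{\mathrm{NA}}\ge0$ is the minimum norm (Lemma \ref{wx}), uniform $\mathrm{J}^{bL}$-stability is immediate.

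For cases (2) and (3) I would first settle ampleness by rewriting the divisor in terms of the given positive classes: in case (2), $K_{X}+\Delta+\delta L=(1+\delta)(K_{X}+\Delta)+\delta\epsilon H$ is nef-plus-ample, hence ample, using only $\delta>0$ from kltness; in case (3), $K_{X}+\Delta+\delta L=(1+\delta\epsilon)(K_{X}+\Delta)+\delta f^{*}A$ is ample-plus-nef. For the uniform J-stability of the twist I would invoke the J-positivity of \cite{Hat2}: the twist is, up to the adiabatic perturbation, a positive combination of $K_{X}+\Delta$ and an ample class, which is exactly the class for which uniform J-stability was proved there. Concretely, the uniform K-stability proofs of \cite{Hat2} bound $\mathrm{DF}_{\Delta}$ below by a $\delta$-governed term plus a nonnegative nA J-functional, and that second ingredient furnishes the required uniform $\mathrm{J}^{K_{X}+\Delta+(\delta-\epsilon)L}$-stability. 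Case (4) is handled the same way with the canonical bundle formula $K_{X}+\Delta\sim_{\mathbb{Q}}f^{*}(K_{C}+B+M)$ inserted, giving $K_{X}+\Delta+\delta L=f^{*}(K_{C}+B+M+\delta A)+\delta\epsilon H$, whose $f^{*}$-part is nef on $C$ thanks to the K-stability of $(C,B,M,A)$, so the whole class is ample; the uniform J-stability of the twist is then precisely the content of the uniform adiabatic K-stability of \cite{Hat}.

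The hard part is the uniform J-stability of the twist in cases (2)--(4), not the ampleness. The delicate points are (a) matching the twist $K_{X}+\Delta+(\delta-\epsilon)L$ of Definition \ref{mmt} with the class for which J-positivity was actually established in \cite{Hat2}, \cite{Hat}, which forces one to track the adiabatic parameter and the $\delta$-slack $\epsilon$ simultaneously; and (b) in case (4), ensuring that the moduli part $M$ and discriminant $B$ produced by the canonical bundle formula interact correctly with the adiabatic J-positivity, so that K-stability of the base $(C,B,M,A)$ delivers both the degree positivity of $K_{C}+B+M+\delta A$ on $C$ and the J-stability upstairs. I expect \emph{(b)} to be the genuine obstacle; once the lower bound for $\mathrm{DF}_{\Delta}$ from \cite[Appendix]{Hat} is in place, \emph{(a)} is bookkeeping.
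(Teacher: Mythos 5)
The paper records no proof of Theorem \ref{spec} at all: it is introduced by the sentence ``The following are known to be specially K-stable'' and the justification is delegated entirely to the cited references, so your proposal is necessarily more detailed than anything in the text it can be compared against. Your strategy --- verifying the three clauses of Definition \ref{mmt} (rational slack, ampleness of $K_X+\Delta+\delta(X,\Delta,L)L$, uniform J-stability of the twist) case by case, doing case (1) by hand and reducing (2)--(4) to \cite{Hat2}, \cite{Hat} --- is exactly what the paper implicitly intends. Your case (1) is essentially complete: the scaling $\delta(X,\Delta,tL)=t^{-1}\delta(X,\Delta,L)$, the inequality $c+\delta(X,\Delta,L)>0$, and the linearity of $H\mapsto(\mathcal{J}^{H})^{\mathrm{NA}}$ together with $(\mathcal{J}^{L})^{\mathrm{NA}}\ge0$ (Lemma \ref{wx}) do make uniform $\mathrm{J}^{bL}$-stability trivial, and your appeal to \cite{BlJ} and \cite{LXZ} in the log Fano subcase is precisely why those references appear in the statement. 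Your observation that shrinking the slack only adds a nonnegative multiple of $(\mathcal{J}^{L})^{\mathrm{NA}}$ to the twisted functional also correctly disposes of the rationality clause.

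One step should be repaired. In the Calabi--Yau subcase of (1) you write that $\delta>0$ holds ``since $(X,\Delta)$ is klt by \cite{BlJ}'', but kltness is not among the hypotheses (the pair is only assumed lc) and \cite{BlJ} does not provide it. It must be deduced from the K-stability hypothesis: a strictly lc pair with $K_X+\Delta\sim_{\mathbb{Q}}0$ admits an lc place, whose associated test configuration has vanishing Donaldson--Futaki invariant, so such a pair is at best K-semistable (cf.\ \cite{O}, \cite{Gitod}). This step cannot be skipped, because under the paper's convention $\delta(X,\Delta,L)=0$ for non-klt pairs, a strictly lc Calabi--Yau pair would have $K_X+\Delta+\delta L\equiv0$ non-ample and the conclusion would fail; only after kltness is established does \cite{BlJ} give $\delta>0$. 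For cases (2)--(4) your ampleness computations (nef plus ample, ample plus nef, and $f^*(\mathrm{nef})$ plus ample via the canonical bundle formula) are correct, and the uniform J-stability of the twist $K_X+\Delta+(\delta-\epsilon)L$ is indeed the content of the positivity results of \cite{Hat2} and \cite{Hat}; since the paper does nothing beyond citing those works for this point, your deferral --- including your identification of the adiabatic-parameter-versus-$\delta$-slack matching as the delicate step --- is consistent with the paper's treatment.
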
  
 
 It is easy to see that if there exists a constant $c\in\mathbb{Q}$ such that $K_X+\Delta\equiv cL$ and $(X,\Delta,L)$ is K-semistable, then $(X,\Delta,L)$ is specially K-semistable. On the other hand, there exists a polarized lc minimal model $(X,\Delta,H)$ such that $(X,\Delta,\epsilon H+K_X+\Delta)$ is K-unstable for sufficiently small $\epsilon>0$ by \cite[Remark 3.5]{Hat}.

In \cite[Appendix]{Hat}, we see that the sum of the nA J-functional and the log-twisted Ding invariant is a lower bound of the DF invariant of a normal semiample test configuration. In light of Theorem \ref{CM}, it is quite natural to consider to give a lower bound of a CM degree in a similar way. With this in mind, we define the J-degree and the log-twisted Ding degree for any polarized family and see their properties. 

\subsubsection{Minimization for J-degree}\label{1st}First, we introduce the following generalization of nA J-functionals.
 \begin{de}\label{relcomp}
 Let $\pi:(X,L)\to C$ be a polarized family over a proper smooth curve and $\pi':(X',L')\to C$ be another semiample family such that $(X,L)\times_CC^{\circ}\cong (X',L')\times_CC^{\circ}$. Suppose that $H$ is a $\mathbb{Q}$-line bundle on $X$ and there exists a canonical birational morphism $\mu:X'\to X$. Then, we define the J$^H$-degree of $(X',L')$ as
 \[
 \mathcal{J}^H(X',L')=\frac{1}{L_0^n}\left(\mu^*H\cdot L'^n-\frac{n}{n+1}\frac{H_0\cdot L_0^{n-1}}{L_0^n}L'^{n+1}\right).
 \]
 Note that this degree is pullback invariant (i.e., Lemma \ref{fomulalem} holds for J$^H$-degrees) and for general $\mathbb{Q}$-divisor $D\sim_{\mathbb{Q}}H$ such that the support of $D'=\mu^*D$ contains no component of $X_0'$, we deduce from \cite[Corollary 1.4.41]{Laz} that
 \[
  \mathcal{J}^H(X',L')=\frac{1}{a_0^2}\left(a_0\left(\lim_{m\to\infty}\frac{\chi(D',mL')}{m^{n}}\right)-b'_0\left(\lim_{m\to\infty}\frac{\chi(D_0,mL_0)}{m^{n-1}}\right)\right).
 \]
Here, $a_0=\lim_{m\to\infty}\frac{\chi(X_0,mL_0)}{m^{n}}$ and $b_0'=\lim_{m\to\infty}\frac{\chi(X',mL')}{m^{n+1}}$. If this is the case, we call $D$ a {\it compatible} divisor with $\mu$. Here, we remark that a compatible divisor $D$ is not necessarily effective.
 \end{de}

Note also that $ \mathcal{J}^H(X',L')$ is independent from relative linear equivalence class of $L'$ over $C$. The following is well-known.

\begin{lem}\label{wx}
Notations as above. If $H\equiv L$, then
 \[
 \mathcal{J}^H(X',L')\ge  \mathcal{J}^H(X,L).
 \]
Equality holds iff $\mu^*L\sim_{\mathbb{Q},C}L'$.
\end{lem}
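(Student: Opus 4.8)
The plan is to express the difference $\mathcal{J}^H(X',L')-\mathcal{J}^H(X,L)$ as a single intersection number on $X'$ and then to deduce its sign from the Hodge index theorem applied to the vertical divisor $E:=\mu^*L-L'$. Write $A:=\mu^*L$ and $B:=L'$; both are nef, since $L$ is ample on $X$ and $L'$ is semiample. As $H\equiv L$ we have $\mu^*H\equiv A$ and $H_0\equiv L_0$, so $H_0\cdot L_0^{n-1}=L_0^n$, and substituting into the formula of Definition \ref{relcomp} gives
\[
\mathcal{J}^H(X',L')-\mathcal{J}^H(X,L)=\frac{1}{(n+1)L_0^n}\left((n+1)A\cdot B^n-nB^{n+1}-A^{n+1}\right),
\]
where I have used the projection formula $A^{n+1}=(\mu^*L)^{n+1}=L^{n+1}$. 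Since $L$ is ample we have $L_0^n>0$, so it suffices to prove $P:=(n+1)A\cdot B^n-nB^{n+1}-A^{n+1}\ge 0$, with equality exactly when $\mu^*L\sim_{\mathbb{Q},C}L'$. Because $\mu$ is an isomorphism over $C^\circ$ and $L,L'$ agree there, $E=A-B$ is supported on $X_0'$, i.e. it is $\pi'$-vertical. Exploiting that $\mathcal{J}^H$ is pullback-invariant and independent of the relative linear class of $L'$, I first replace $(X',L')$ by its relative ample model over $X$, exactly as in the proof of Theorem \ref{CM}, after which $-E$ is $\mu$-ample.

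Next I interpolate. Set $M_s:=(1-s)A+sB=\mu^*L-sE$ for $s\in[0,1]$, which is ample for $s\in(0,1)$ by the argument used in Corollary \ref{cmcsck}, and let $P(s):=(n+1)A\cdot M_s^n-nM_s^{n+1}-A^{n+1}$, so that $P(0)=0$ and $P(1)=P$. Differentiating, and using $\tfrac{d}{ds}M_s=-E$ together with $A-M_s=sE$, one finds $P'(s)=-n(n+1)\,s\,(E^2\cdot M_s^{n-1})$, whence
\[
P=-n(n+1)\int_0^1 s\,(E^2\cdot M_s^{n-1})\,ds.
\]
(The same reduction follows calculus-free from the purely algebraic identity $P=-\sum_{j=0}^{n-1}(n-j)\,E^2\cdot A^jB^{n-1-j}$, obtained by telescoping $x^k-y^k=(x-y)\sum_a x^ay^{k-1-a}$.) Thus everything is reduced to the sign of the numbers $E^2\cdot M_s^{n-1}$.

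For fixed $s\in(0,1)$ the class $M_s$ is ample, so a general complete intersection surface $S$ cut out by $n-1$ general members of $|mM_s|$ (for $m\gg 0$) is, by Bertini, a smooth projective surface mapping to $C$, and $E|_S$ is a divisor supported on fibers of $S\to C$. By Zariski's lemma (the relative Hodge index theorem) $(E|_S)^2\le 0$, and since $(E|_S)^2$ is a positive multiple of $E^2\cdot M_s^{n-1}$ we get $E^2\cdot M_s^{n-1}\le 0$. Feeding this into the integral yields $P\ge 0$, the desired inequality.

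Finally I treat equality. If $\mu^*L\sim_{\mathbb{Q},C}L'$ then $\mathcal{J}^H(X',L')=\mathcal{J}^H(X,L)$ immediately, since $\mathcal{J}^H$ is invariant under relative $\mathbb{Q}$-linear equivalence of $L'$ and under pullback. Conversely, suppose $P=0$. The integrand above is continuous, nonpositive, and weighted by $s\ge 0$, so it vanishes identically; fixing any $s_0\in(0,1)$ gives $E^2\cdot M_{s_0}^{n-1}=0$ with $M_{s_0}$ ample. Now $E$ is supported on the fiber $X_0'$, which is connected because $\pi'_*\mathcal{O}_{X'}=\mathcal{O}_C$, so $E$ is a $\mathbb{Q}$-combination of the irreducible components of $X_0'$; cutting by a general complete intersection surface $S$ as above and applying the classical Zariski lemma on the fibered surface $S\to C$, the vanishing $(E|_S)^2=0$ forces $E|_S$, and hence $E$, to be a rational multiple of the fiber $X_0'$. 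Therefore $E=c\,\pi'^*[0]\sim_{\mathbb{Q},C}0$, i.e. $\mu^*L\sim_{\mathbb{Q},C}L'$, as required. The main obstacle is precisely this equality analysis: the inequality rests only on the negativity $E^2\cdot M_s^{n-1}\le 0$ of the vertical class $E$, whereas pinning down equality requires the sharp description of the kernel of the fiberwise intersection form furnished by Zariski's lemma, for which the reduction to the $\mu$-ample relative ample model (so that $M_{s_0}$ is genuinely ample) is essential.
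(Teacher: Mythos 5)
Your argument is correct, and its skeleton is exactly the paper's: the paper also interpolates $L'_s=\mu^*L+sE$ and computes
\[
\frac{d}{ds}\left(\mathcal{J}^H(X',L'_s)-\mathcal{J}^H(X,L)\right)=-\frac{ns}{L_0^n}\,E^2\cdot L_s'^{\,n-1},
\]
which is precisely your $P'(s)/\bigl((n+1)L_0^n\bigr)$. The one genuine difference is what happens next: the paper simply cites \cite[Lemma 1]{LX} for the two crucial inputs, namely the negativity $E^2\cdot L_s'^{\,n-1}\le 0$ for the vertical divisor $E$ and the fact that equality (for $s\in(0,1)$) forces $E\sim_{\mathbb{Q},C}0$, whereas you prove these from scratch by cutting with general complete-intersection surfaces and applying Zariski's lemma, after reducing to the relative ample model so that $M_{s}$ is honestly ample. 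This buys a self-contained proof of the input the paper imports from \cite{LX}, but two of your steps need repair as written. First, since $X'$ is only normal, Bertini gives a \emph{normal} surface $S$, not a smooth one; you should pass to a resolution $\tilde S\to S$ (harmless, since $E$ is $\mathbb{Q}$-Cartier, so the relevant intersection numbers and the verticality of the pulled-back divisor are preserved). Second, and more substantively, the Zariski kernel argument requires the central fiber of $S\to C$, i.e.\ $X_0'\cap S$, to be connected; connectedness of $X_0'$ itself, which is what you invoke via $\pi'_*\mathcal{O}_{X'}=\mathcal{O}_C$, is not the statement you need, and ample complete intersections in connected but reducible schemes can well be disconnected. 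The fix is standard but requires an argument: the general fibers $X'_t$ are normal and connected, hence irreducible, so by Bertini irreducibility the general fibers of $S\to C$ are irreducible; since $S$ is normal, Stein factorization then gives $(\pi'|_S)_*\mathcal{O}_S=\mathcal{O}_C$, whence \emph{every} fiber of $S\to C$, including the one over $0$, is connected. With these two repairs your proof is complete and constitutes an independent verification of \cite[Lemma 1]{LX} in the form the paper uses it.
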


\begin{proof}
Let $L'=\mu^*L+E$ where the support of $E$ is contained in $X'_0$. Let $L'_s=\mu^*L+sE$ and $f(s)=\mathcal{J}^H(X',L'_s)-\mathcal{J}^H(X,L)$ for $s\in[0,1]$. Then by \cite[Lemma 1]{LX}
\[
\frac{d}{ds}f(s)=-\frac{ns}{(L_0)^n}(E^2)\cdot(L_s'^{n-1})\ge0.
\]
Moreover, this derivative is positive when $E\not\sim_{C,\mathbb{Q}}0$ and $s\in(0,1)$.
\end{proof}

 By Lemma \ref{lim}, if $(X,L)$ is J$^H$-semistable then $(\mathcal{J}^H)^{\mathrm{NA}}(\mathscr{F})\ge0$ for any filtrations. With this in mind, we prove the following, so-called J-minimization.
\begin{prop}\label{jcm2}
Let $\pi:(X,L)\to C$ be a polarized family over a proper smooth curve and $\pi':(X',L')\to C$ be another semiample family such that there exists a $C^\circ$-isomorphism $f^\circ:(X,L)\times_CC^{\circ}\cong (X',L')\times_CC^{\circ}$. Let $H$ be a $\mathbb{Q}$-line bundle on $X$ such that $H_0$ is nef. Suppose that $(X_0,L_0)$ is a polarized $\mathrm{J}^{H_0}$-semistable deminormal scheme and there exists a birational morphism $\mu:X'\to X$ over $C$ such that $\mu|_{X'^\circ}=f^{\circ-1}$. 

Then the following inequality holds.
\begin{equation}\label{J-eq}
\mathcal{J}^H(X',L')\ge\mathcal{J}^H(X,L).
\end{equation}
Furthermore, if $H_0$ is ample and $(X_0,L_0)$ is uniformly {\rm J}$^{H_0}$-stable, then equality holds iff $\mu^*L\sim_{\mathbb{Q},C}L'$.
\end{prop}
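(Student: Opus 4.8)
The plan is to transpose the proof of Theorem \ref{CM} into the J-theoretic setting, identifying the difference of J-degrees with the nA J-functional of a filtration of the central fiber. First I would carry out the reductions of Theorem \ref{CM} and Corollary \ref{logCm}: after replacing $(X',L')$ by its relative ample model over $X$ and $L$ by a sufficiently divisible multiple, I may assume that $\mu\colon X'\to X$ is a birational contraction with $L'=\mu^*L-E$, where $E\ge0$ is supported on $X'_0$ and $-E$ is $\mu$-ample, and that all bundles in sight are $\mathbb{Z}$-Cartier. To this data I attach the good filtration $\mathscr{F}$ of $(X_0,kL_0)$ furnished by Theorem \ref{CM}; its leading weight coefficient is $b_0'-b_0$, the difference of the leading coefficients of $\chi(X',mL')$ and $\chi(X,mL)$.

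Next I would fix a general $\mathbb{Q}$-divisor $D\sim_{\mathbb{Q}}H$ on $X$ that is simultaneously compatible with $\mu$ in the sense of Definition \ref{relcomp} and with an approximation $\{\mathscr{F}_{(k)}\}$ as in Definition \ref{compatible.divisor}, so that $D_0=D|_{X_0}\sim_{\mathbb{Q}}H_0$ computes $(\mathcal{J}^{H_0})^{\mathrm{NA}}(\mathscr{F})$ through (\ref{jdef}). Writing $D_0=\sum_i m_i\Gamma_i$, $a_0=L_0^n/n!$ and $\tilde{a}_0=\lim_m\chi(D_0,mL_0)/m^{n-1}$, both quantities of interest acquire the common shape $\tfrac{1}{a_0^2}\bigl(a_0\cdot(\ast)-(b_0'-b_0)\tilde{a}_0\bigr)$: for the family difference $\mathcal{J}^H(X',L')-\mathcal{J}^H(X,L)$ one has $(\ast)=\tfrac{1}{n!}(D'\cdot L'^n-D\cdot L^n)$ by the alternative formula of Definition \ref{relcomp}, while for $(\mathcal{J}^{H_0})^{\mathrm{NA}}(\mathscr{F})$ one has $(\ast)=\sum_i m_iB_{\mathscr{F}_{1,i}}$. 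Since $a_0>0$, Lemma \ref{cl3} applied to $D$ gives $\tfrac{1}{n!}(D'\cdot L'^n-D\cdot L^n)\ge\sum_i m_iB_{\mathscr{F}_{1,i}}$, hence
\[
\mathcal{J}^H(X',L')-\mathcal{J}^H(X,L)\ge(\mathcal{J}^{H_0})^{\mathrm{NA}}(\mathscr{F}).
\]
As $(X_0,L_0)$ is $\mathrm{J}^{H_0}$-semistable, Lemma \ref{lim}(ii) yields $(\mathcal{J}^{H_0})^{\mathrm{NA}}(\mathscr{F})\ge0$, which is (\ref{J-eq}). Because (\ref{jdef}) and Lemma \ref{lim} are phrased for ample $H$, for $H_0$ merely nef I would argue by perturbation, applying the inequality to $H+\epsilon L$ (here $H_0+\epsilon L_0$ is ample and still $\mathrm{J}$-semistable since $(\mathcal{J}^{L_0})^{\mathrm{NA}}\ge0$) and letting $\epsilon\to0$ by continuity of both sides.

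For the equality statement, where $H_0$ is ample and $(X_0,L_0)$ is uniformly $\mathrm{J}^{H_0}$-stable, the implication $\mu^*L\sim_{\mathbb{Q},C}L'\Rightarrow$ equality is immediate from the pullback invariance of $\mathcal{J}^H$ and its invariance under relative linear equivalence of $L'$ (Definition \ref{relcomp}). Conversely, equality forces $(\mathcal{J}^{H_0})^{\mathrm{NA}}(\mathscr{F})=0$. Transporting uniform $\mathrm{J}^{H_0}$-stability (Definition \ref{defkst23}) to the filtration through the approximation and Lemma \ref{lim} gives a rational $\epsilon_0>0$ with $(\mathcal{J}^{H_0})^{\mathrm{NA}}(\mathscr{F})\ge\epsilon_0(\mathcal{J}^{L_0})^{\mathrm{NA}}(\mathscr{F})$, so the minimum norm $(\mathcal{J}^{L_0})^{\mathrm{NA}}(\mathscr{F})$ vanishes. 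It then remains to deduce that a filtration with vanishing minimum norm is trivial, and that triviality of $\mathscr{F}$ forces $\mu^*L\sim_{\mathbb{Q},C}L'$ (equivalently $E\sim_{\mathbb{Q},C}0$, which together with the $\mu$-ampleness of $-E$ yields $E=0$).

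I expect the main obstacle to be precisely this last step in the deminormal, possibly reducible or non-reduced, setting. In the smooth irreducible case the vanishing of the minimum norm is read off the concave transform $G$ on the Okounkov body (as in Corollary \ref{cmcsck}), a tool unavailable for general $X_0$. I would instead invoke the component-wise volume estimates of \S\ref{nonreduced}: bounding $(\mathcal{J}^{L_0})^{\mathrm{NA}}(\mathscr{F})$ from below by the corresponding quantities on the reduced irreducible components via Proposition \ref{restr}, and then reproducing on each component the dichotomy of Corollary \ref{cmcsck}, namely that $E\ne0$ with $-E$ $\mu$-ample forces $L'^{n+1}<L^{n+1}$, hence a strictly negative barycenter for $\mathscr{F}$, while $\mathscr{F}^0$ remains non-full, so that the minimum norm is strictly positive. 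The remaining verifications are routine bookkeeping with leading coefficients.
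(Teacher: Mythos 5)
Your proof of the inequality (\ref{J-eq}) is correct and is essentially the paper's own argument: the good filtration of Theorem \ref{CM}, a very general divisor $D\sim_{\mathbb{Q}}H$ compatible with both $\mu$ and the approximation, Lemma \ref{cl3} to obtain $\mathcal{J}^H(X',L')-\mathcal{J}^H(X,L)\ge(\mathcal{J}^{H_0})^{\mathrm{NA}}(\mathscr{F})$, Lemma \ref{lim} plus $\mathrm{J}^{H_0}$-semistability for non-negativity, and perturbation by $\epsilon L$ for merely nef $H_0$.

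The equality statement is where you have a genuine gap, and it is exactly the step you flag as ``the main obstacle.'' Your route needs: $(\mathcal{J}^{L_0})^{\mathrm{NA}}(\mathscr{F})=0$ forces $\mu^*L\sim_{\mathbb{Q},C}L'$, i.e. $E\ne0$ implies strict positivity of the nA J-functional of $\mathscr{F}$. But $(\mathcal{J}^{L_0})^{\mathrm{NA}}(\mathscr{F})$ is \emph{defined} (Definition \ref{compatible.divisor}, Lemma \ref{lim}) as the limit of the functionals of the finitely generated approximations $\mathscr{F}_{(k)}$, and a limit of strictly positive terms can vanish; so non-degeneracy data attached to $\mathscr{F}$ itself (strictly negative barycenter, $\mathscr{F}^0$ non-full, $\|\mathscr{F}\|_2>0$) proves nothing unless one can bound the minimum norm from below by such data uniformly along the approximation. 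No comparison between $(\mathcal{J}^{L_0})^{\mathrm{NA}}$ and the $L^2$-norm or barycenter is established in the paper for non-finitely generated filtrations; producing one is precisely the kind of regularization statement the paper is structured to avoid (cf. Remark \ref{szerem}). Moreover, the dichotomy you quote from Corollary \ref{cmcsck} yields $\|\mathscr{F}\|_2>0$, which feeds into Theorem \ref{prop11} (the Chow$_\infty$ lower bound for the DF invariant), not into any J-functional; and its proof uses irreducibility of $X_0$ (to arrange that $E$ is $\mu$-exceptional) and Okounkov bodies, neither of which is available for a deminormal $X_0$ --- Proposition \ref{restr} gives only one-sided estimates, as you note.

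The paper's proof of the equality case avoids filtration positivity altogether. Uniform $\mathrm{J}^{H_0}$-stability and ampleness of $H_0$ imply that $(X_0,L_0)$ is $\mathrm{J}^{H_0-\epsilon L_0}$-semistable with $H_0-\epsilon L_0$ ample for small $\epsilon>0$; applying the already-proven semistable inequality to the twist $H-\epsilon L$ and using linearity of $\mathcal{J}^{\bullet}$ in the twisting class, equality in (\ref{J-eq}) forces $\mathcal{J}^L(X',L')\le\mathcal{J}^L(X,L)$. The elementary, purely intersection-theoretic Lemma \ref{wx} (a Hodge-index type computation at the level of the family, via \cite[Lemma 1]{LX}) gives the reverse inequality, with equality iff $\mu^*L\sim_{\mathbb{Q},C}L'$. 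Replacing your filtration-positivity step by this perturbation-plus-Lemma \ref{wx} argument closes the proof; as written, your ``equality $\Rightarrow$ isomorphism'' direction does not.
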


\begin{proof}
First, we treat the case when $H_0$ is ample and $(X_0,L_0)$ is uniformly {\rm J}$^{H_0}$-stable and show (\ref{J-eq}). By Definition \ref{relcomp}, we may replace $\mu$ by the ample model of $(X',L')$ over $X$. Note that $\mu^*D=\mu_*^{-1}D$ holds now. As Theorem \ref{CM}, replacing $L$ by $kL$ for sufficiently divisible $k\in\mathbb{Z}_{>0}$, take a good filtration $\mathscr{F}$ of $(X_0,L_0)$. Now, we may assume that $(X,L)$ and $\mathscr{F}$ satisfy Condition \ref{crucial} and take its approximation $\{\mathscr{F}_{(k)}\}_{k\in\mathbb{Z}_{>0}}$ as Definition \ref{appro}. 
Replacing $H$ by $cH$ for some $c\in\mathbb{Z}_{>0}$, we may assume that $H$ is $\mathbb{Z}$-Cartier, $H_0$ is very ample and there exists an open neighborhood $0\in U\subset C$ such that $H$ is ample over $U$ by \cite[Proposition 1.41]{KM}. Since we work over $\mathbb{C}$, we pick a very general divisor $D\sim H$ up satisfying the following.
\begin{itemize}
\item $D\cap\pi^{-1}U$ is effective, reduced and irreducible, 
\item $D$ has connected fibers and is flat over $U$,
\item $D$ is compatible with $\mu$,
\item The restriction $D_0\in|H_0|$ of $D$ to $X_0$ is compatible with $\{\mathscr{F}_{(k)}\}_{k\in\mathbb{Z}_{>0}}$ (see Definition \ref{compatible.divisor}).
\end{itemize} 
 Let $b_0=\lim_{m\to\infty}\frac{\chi(X,mL)}{m^{n+1}}$. Then we have
 \begin{align*}
  \mathcal{J}^H(X',L')-\mathcal{J}^H(X,L)&=\frac{-1}{a_0^2}\Biggl((b'_0-b_0)\left(\lim_{m\to\infty}\frac{\chi(D_0,mL_0)}{m^{n-1}}\right)\\
  &-a_0\left(\lim_{m\to\infty}\left(\frac{\chi(D',mL')}{m^{n}}-\frac{\chi(D,mL)}{m^{n}}\right)\right)\Biggr),
 \end{align*}
In the above equation, the first term of the right hand side is calculated in Theorem \ref{CM}. To calculate the second term, let $D_1$ be the Zariski closure of $D\cap\pi^{-1}U$ in $X$. 
Then 
\[
\left(\frac{\chi(\mu^*D,mL')}{m^{n}}-\frac{\chi(D,mL)}{m^{n}}\right)=\left(\frac{\chi(\mu^{-1}_*D_1,mL')}{m^{n}}-\frac{\chi(D_1,mL)}{m^{n}}\right)
\]
holds and we apply Lemma \ref{cl3} to an effective divisor $D_1$ instead of $D$. Then, we obtain the inequality
\[
\mathcal{J}^H(X',L')-\mathcal{J}^H(X,L)\ge(\mathcal{J}^H)^\mathrm{NA}(\mathscr{F}).
\]
By 
Lemma \ref{lim} and the assumption that $(X_0,L_0)$ is J$^{H_0}$-semistable, we have $(\mathcal{J}^H)^\mathrm{NA}(\mathscr{F})\ge0$.

On the other hand, $(X_0,L_0)$ is uniformly $\mathrm{J}^{(H_0-\epsilon L_0)}$-stable and $H_0-\epsilon L_0$ is ample for sufficiently small $\epsilon>0$. Therefore, if equality holds in (\ref{J-eq}), so does $\mathcal{J}^L(X',L')=\mathcal{J}^L(X,L)$. Then it follows that $\mu^*L\sim_{\mathbb{Q},C}L'$ from Lemma \ref{wx}.

Finally, suppose that $H_0$ is nef and $(X_0,L_0)$ is J$^{H_0}$-semistable. For $\epsilon>0$, $H_0+\epsilon L_0$ is ample and $(X_0,L_0)$ is uniformly J$^{(H_0+\epsilon L_0)}$-stable. We have already shown that $\mathcal{J}^{H+\epsilon L}(X',L')\ge\mathcal{J}^{H+\epsilon L}(X,L)$. Thus,
\[
 \mathcal{J}^H(X',L')-\mathcal{J}^H(X,L)=\lim_{\epsilon\to 0} (\mathcal{J}^{H+\epsilon L}(X',L')-\mathcal{J}^{H+\epsilon L}(X,L))\ge0.
 \]
 We complete the proof.
\end{proof}
\begin{rem}\label{forapp}
If the base field $k$ is a countable algebraically closed field, Proposition \ref{jcm2} also holds. As in the proof, we take an approximation $\{\mathscr{F}_{(r)}\}_{r\in\mathbb{Z}_{>0}}$ to $\mathscr{F}$ and uncountable algebraically closed field $k'$ containing $k$. Note that in this case, there exists no compatible divisor with $\{\mathscr{F}_{(r)}\}_{r\in\mathbb{Z}_{>0}}$. However, if we change the base field to $k'$, then there exists a compatible divisor $D$. Denote $\mathscr{F}'$ be the filtration of $R\otimes_kk'$ defined by $\mathscr{F}^\bullet R\otimes_kk'$ for example. Let $h:X\times_kk'\to X$ be the canonical morphism. Then it is easy to see that $(\mathcal{J}^{H_0})^{\mathrm{NA}}(\mathscr{F}_{(r)})=(\mathcal{J}^{h_0^*H_0})^{\mathrm{NA}}((\mathscr{F}_{(r)})')$ and $\mathcal{J}^H(X,L)=\mathcal{J}^{h^*H}(X\times_kk',h^*L)$. The same equation holds for $(X',L')$. Thus we have similarly
\[
\mathcal{J}^H(X',L')-\mathcal{J}^H(X,L)\ge\lim_{r\to\infty}(\mathcal{J}^H)^\mathrm{NA}(\mathscr{F}_{(r)}).
\]
\end{rem}
\subsubsection{Minimization for Ding degree}\label{2nd}On the other hand, we consider the following generalization of the log-twisted Ding invariant of a semiample test configuration.
 \begin{de}
 Let $\pi:(X,\Delta,L)\to C$ be a $\mathbb{Q}$-Gorenstein polarized family over a proper smooth curve where $X_0$ is irreducible. Suppose that there exists a $\mathbb{Q}$-line bundle $H$ on $X$ such that $L|_{X\times_{C}C^{\circ}}\sim_{\mathbb{Q}, C^\circ}-(K_X+\Delta+H)|_{X\times_{C}C^{\circ}}$. Let $\pi':(X',\Delta',L')\to C$ be another semiample family such that $(X,\Delta,L)\times_CC^{\circ}\cong (X',\Delta',L')\times_CC^{\circ}$. Suppose that there exists a birational $C$-morphism $\mu:X'\to X$ that induces the $C^{\circ}$-isomorphism. Then we define the {\it log-twisted Ding degree} of $(X',\Delta',L')$ with respect to $H$
 \begin{equation*}\label{dingeqdef}
\begin{split}
 \mathrm{Ding}_{(\Delta',H)}(X',L')&=\sup\{t\in\mathbb{Q}|(X',\Delta'+D_{(X',\Delta',H,L')}+tX_0')\textrm{ is sublc around }X_0'\}\\
 &-1-\frac{L'^{n+1}}{(n+1)L_0^n},
 \end{split}
 \end{equation*}
 where $D_{(X',\Delta',H,L')}$ is a unique $\mathbb{Q}$-divisor whose support is contained in $X'_0$ such that $D_{(X',\Delta',H,L')}\equiv-(K_{X'/C}+\Delta'+L'+\mu^*H)$ holds. Indeed, uniqueness of $D_{(X',\Delta',H,L')}$ follows from the fact that there exists a very general curve $C'\subset X'$ that is disjoint from any irreducible component of $X'_0$ but the strict transformation of $X_0$ and from \cite[Lemma 3.39]{KM}. Note that $K_{X'}+\Delta'+D_{(X',\Delta',H,L')}$ is $\mathbb{Q}$-Cartier and we can check whether $(X',\Delta'+D_{(X',\Delta',H,L')}+tX_0')$ is sublc around $X_0'$ or not. We denote the first term of the right hand side of 
 the above equation by $\mathrm{lct}(X',\Delta'+D_{(X',\Delta',H,L')};X_0')$. It is easy to see that if $L''\sim_{\mathbb{Q},C}L'$, then 
  \[
 \mathrm{Ding}_{(\Delta',H)}(X',L')= \mathrm{Ding}_{(\Delta',H)}(X',L'').
 \]
 \end{de}
In the above definition, if $H=0$ and $\Delta'=0$, then the log-twisted Ding degree coincides with the degree of the Ding line bundle introduced by \cite[(3.5)]{Ber}. The log-twisted Ding degree has similar properties to those of the log-twisted Ding invariant.
\begin{lem}\label{ding.lemma}
Notations as above. Then the following hold.
\begin{enumerate}
\item Let $\pi'':(X'',\Delta'',L'')\to C$ be another semiample $\mathbb{Q}$-Gorenstein family over $C$ such that there exists a birational morphism $\mu'':X''\to X'$ such that $\mu''$ induces a $C^\circ$-isomorphism $(X',\Delta')\times_{C}C^{\circ}\cong(X'',\Delta'')\times_{C}C^{\circ}$ and $L''=\mu''^*L'$. Then 
\[
 \mathrm{Ding}_{(\Delta'',H)}(X'',L'')= \mathrm{Ding}_{(\Delta',H)}(X',L').
\]
\item Let $f:C'\to C$ be a finite morphism of smooth curves and $\pi'':(X'',\Delta'',L'')\to C'$ be the normalized base change of $\pi':(X',\Delta',L')\to C$. Let $\mu'':X''\to X'$ be the induced morphism and $r=\mathrm{deg}\,f$. Then
\[
\mathrm{Ding}_{(\Delta'',H)}(X'',L'')= r\,\mathrm{Ding}_{(\Delta',H)}(X',L').
\]
\item We have the following inequality
\[
\mathrm{Ding}_{(\Delta',H)}(X',L')\le\mathrm{CM}_{(\Delta',H)}(X',L')\coloneq \mathrm{CM}((X',\Delta',L')/C)+ \mathcal{J}^H(X',L').
\]
Equality holds if $(X',\Delta'+X'_0)$ is lc and $K_{X'/C}+\Delta'+L'+\mu^*H\sim_{\mathbb{Q},C}0$.
\end{enumerate}
\end{lem}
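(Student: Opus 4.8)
The three assertions are functoriality properties of the log-twisted Ding degree, and my plan is to treat (1) and (2) as crepant/base-change invariance and (3) as the genuine inequality feeding the main theorem. Throughout I abbreviate $D'=D_{(X',\Delta',H,L')}$ and $\ell'=\mathrm{lct}(X',\Delta'+D';X_0')$, so that $\mathrm{Ding}_{(\Delta',H)}(X',L')=\ell'-1-\frac{L'^{n+1}}{(n+1)L_0^n}$, and similarly on $X''$. For (1) the plan is to show that $\mu''$ is crepant for the boundaries $\Delta'+D'$ and $\Delta''+D''$. Writing $K_{X''/C}+\Delta''=\mu''^*(K_{X'/C}+\Delta')+E$ with $E$ the $\mu''$-exceptional (hence fibre-supported) discrepancy divisor, and using $L''=\mu''^*L'$, the defining numerical equivalence of $D''$ together with its uniqueness forces $D''=\mu''^*D'-E$, whence $K_{X''}+\Delta''+D''=\mu''^*(K_{X'}+\Delta'+D')$. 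Since $\pi''=\pi'\circ\mu''$ gives $\mu''^*X_0'=X_0''$, invariance of log discrepancies under crepant pullback yields $\ell''=\ell'$; and $L''^{n+1}=(\mu''^*L')^{n+1}=L'^{n+1}$ while $L_0^n$ is unchanged. Combining gives the equality of Ding degrees.

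For (2) I would first note that $L''=g^*L'$ for the degree-$r$ finite morphism $g\colon X''\to X'$, so the projection formula gives $L''^{n+1}=r\,L'^{n+1}$ while $L_0^n$ (a general-fibre invariant) is unchanged; thus the volume term scales by $r$. For the lct term I would use that the relative dualizing sheaf is stable under base change, so that modulo the normalization conductor $K_{X''/C'}+\Delta''=g^*(K_{X'/C}+\Delta')-\mathfrak{c}$ with $\mathfrak{c}$ supported over $0$; the defining equivalence and uniqueness of $D''$ then absorb $\mathfrak{c}$ and give the relative crepancy $K_{X''/C'}+\Delta''+D''=g^*(K_{X'/C}+\Delta'+D')$. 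Passing to absolute canonicals and inserting Riemann--Hurwitz $K_{C'}=f^*K_C+R$ together with $g^*X_0'=\sum_j e_j X''_{0'_j}$, a short comparison of coefficients shows that the local threshold over each point $0'_j\in f^{-1}(0)$ satisfies $\mathrm{lct}(X'',\Delta''+D'';X''_{0'_j})-1=e_j(\ell'-1)$. Summing over $f^{-1}(0)$ and using $\sum_j e_j=\deg f=r$ matches the factor $r$ from the volume term, proving $\mathrm{Ding}''=r\,\mathrm{Ding}'$. The delicate point here is the bookkeeping of the ramification and of possibly non-reduced fibres; the conductor itself cancels automatically thanks to the uniqueness of $D''$, so the only genuine input is the Riemann--Hurwitz shift on the base.

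For (3) I would first compute $\mathrm{CM}_{(\Delta',H)}(X',L')$ in closed form. Adding the formula (\ref{CMform}) for $\mathrm{CM}((X',\Delta',L')/C)$ to the definition of $\mathcal{J}^H(X',L')$ and using that on the general fibre $K_{X_0}+\Delta_0+H_0\equiv -L_0$ (from $L\sim_{\mathbb{Q},C^\circ}-(K_X+\Delta+H)$), the slope coefficient becomes $-1$ and, after substituting $K_{X'/C}+\Delta'+\mu^*H\equiv -L'-D'$, one obtains
\[
\mathrm{CM}_{(\Delta',H)}(X',L')=-\frac{L'^{n+1}}{(n+1)L_0^n}-\frac{D'\cdot L'^n}{L_0^n}.
\]
Hence the asserted inequality is equivalent to $\ell'-1\le -\frac{D'\cdot L'^n}{L_0^n}$. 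Writing the fibre as $X_0'=\sum_k m_k W_k$, $D'=\sum_k d_k W_k$ and $v_k=W_k\cdot L'^n\ge 0$, and noting $X_0'\cdot L'^n=L_0^n$, this reduces to $\sum_k v_k(\ell' m_k+d_k-m_k)\le 0$. Since the pair is lc at the threshold $t=\ell'$, the coefficient $d_k+\ell' m_k$ of each $W_k$ is $\le 1$, so $\ell' m_k+d_k-m_k\le 1-m_k\le 0$ because $m_k\ge 1$; summing against $v_k\ge 0$ gives the claim.

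For the equality criterion, $K_{X'/C}+\Delta'+L'+\mu^*H\sim_{\mathbb{Q},C}0$ forces $D'=0$ by uniqueness, so the inequality reads $\ell'\le 1$, while $(X',\Delta'+X_0')$ lc gives $\ell'\ge 1$; hence $\ell'=1$ and equality holds. I expect (2) to be the main obstacle, because of the need to control the normalization conductor and the non-reduced central fibres under the ramified base change; parts (1) and (3) are comparatively mechanical once the closed formula for $\mathrm{CM}_{(\Delta',H)}$ and the uniqueness of $D$ are in hand.
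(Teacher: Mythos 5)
The paper never writes this proof out (it is deferred to \cite[Proposition A.12]{Hat}), so your argument can only be measured against the standard route that reference follows; your proposal is essentially that route, and its three mechanisms are sound. I checked the computations: in (1) the uniqueness of the fibre-supported divisor does force a crepant relation and $L''^{n+1}=L'^{n+1}$; in (2) finite crepant pullback together with Riemann--Hurwitz gives exactly $t''=e_j(t'-1)+1$, hence via \cite[Proposition 5.20]{KM} the local threshold identity $\mathrm{lct}_{0_j}-1=e_j(\ell'-1)$, which summed against $\sum_j e_j=r$ matches the scaled volume term (note this requires reading the lct term of the base-changed family as a \emph{sum} over $f^{-1}(0)$ of local contributions, the only reading under which (2) is true); in (3) the closed formula $\mathrm{CM}_{(\Delta',H)}(X',L')=-\tfrac{L'^{n+1}}{(n+1)L_0^n}-\tfrac{D'\cdot L'^n}{L_0^n}$ is correct, and the bound $c_k+\ell' m_k\le 1$ on coefficients at the (attained) threshold, with $m_k\ge 1$ and $W_k\cdot L'^n\ge 0$, gives the inequality.

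Three repairs are needed, all local. (a) In (1) and (2) you pull back $K_{X'/C}+\Delta'$ along $\mu''$ resp.\ $g$, but $\pi'$ is only a semiample family and the paper asserts only that $K_{X'}+\Delta'+D'$ is $\mathbb{Q}$-Cartier, not $K_{X'}+\Delta'$ itself; run the comparison with the combinations instead, e.g.\ in (1) set $\tilde{D}:=\mu''^*(K_{X'}+\Delta'+D')-(K_{X''}+\Delta'')$, check it is supported in $X_0''$ and satisfies the defining numerical equivalence of $D''$ (using $L''=\mu''^*L'$), and conclude $\tilde{D}=D''$ by uniqueness. (b) In (3), $K_{X'/C}+\Delta'+L'+\mu^*H\sim_{\mathbb{Q},C}0$ forces $D'=-d\,X_0'$, with $d$ the degree of the pulled-back divisor on $C$, not $D'=0$; either twist $L'$ by that pullback (all three functionals are $\sim_{\mathbb{Q},C}$-invariant) or carry $d$ through --- it adds $d$ to both sides and cancels. (c) In (2), the ramification correction $\mathrm{Ram}_g-\pi''^*\mathrm{Ram}_f$ is supported over \emph{all} ramification points of $f$, not only over $0$; over a ramification point in $C^{\circ}$ whose fibre has components of unequal multiplicities it is not a multiple of that fibre, and then $D''$ does not even exist. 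This is a defect of the statement (unused in the paper) rather than of your computation, which is correct whenever $\mathrm{Ding}_{(\Delta'',H)}$ is defined.
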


 The proof is similar to \cite[Proposition A.12]{Hat} and left to the reader. Here, we prove the following generalization of \cite[Theorem 3.1]{BX} to the log twisted Fano case.
 
 \begin{prop}\label{ding}
 Let $\pi:(X,\Delta,L)\to C$ be a polarized $\mathbb{Q}$-Gorenstein family over a proper smooth curve. Let $H$ be a $\mathbb{Q}$-line bundle such that $H\sim_{\mathbb{Q},C}-(K_{X/C}+\Delta+L)$. Suppose that $\delta(X_0,\Delta_0,L_0)\ge1$. Let $\pi':(X',\Delta',L')\to C$ be a semiample family such that there exists a birational morphism $\mu:X'\to X$ that induces a $C^\circ$-isomorphism $(X,\Delta,L)\times_{C}C^{\circ}\cong(X',\Delta',L')\times_{C}C^{\circ}$. Then
 \[
 \mathrm{Ding}_{(\Delta',H)}(X',L')\ge \mathrm{Ding}_{(\Delta,H)}(X,L).
\]
 \end{prop}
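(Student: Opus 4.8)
The plan is to carry out, for the family over $C$, the test-configuration computation sketched in \S\ref{2nd}: express both Ding degrees as a log canonical threshold minus one minus a normalized self-intersection, match the difference of the self-intersections with the barycenter of the filtration constructed in Theorem \ref{CM}, and then bound the threshold from below using $\delta(X_0,\Delta_0,L_0)\ge 1$ via the $\delta_k$-invariants. First I would normalize the models: since the log-twisted Ding degree is pullback invariant (Lemma \ref{ding.lemma}(1)), I may replace $(X',\Delta',L')$ by the ample model of $(X',L')$ over $X$ and assume, exactly as in the proof of Theorem \ref{CM}, that $\mu\colon X'\to X$ is the normalized blow-up of an ideal $\mathfrak{a}\subset\mathcal{O}_X$ cosupported on $X_0$ over a neighbourhood of $0$, with $L'=\mu^*L-E$, $E$ effective and $\mu$-exceptional, and $-E$ being $\mu$-ample. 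On the base the compensating divisor vanishes: the hypothesis $H\sim_{\mathbb{Q},C}-(K_{X/C}+\Delta+L)$ gives $-(K_{X/C}+\Delta+L+H)\sim_{\mathbb{Q},C}0$, and since $X_0$ is irreducible this forces $D_{(X,\Delta,H,L)}=0$. Moreover $\delta(X_0,\Delta_0,L_0)\ge 1$ forces $(X_0,\Delta_0)$ klt, so by inversion of adjunction $(X,\Delta+X_0)$ is plt near $X_0$ and $\mathrm{lct}(X,\Delta;X_0)=1$, whence
\[
\mathrm{Ding}_{(\Delta,H)}(X,L)=-\frac{L^{n+1}}{(n+1)L_0^n}.
\]

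Next I would rewrite the Ding degree of $(X',\Delta',L')$ as a flag-ideal threshold. Using $K_{X'/C}+\Delta'=\mu^*(K_{X/C}+\Delta)+\sum_j(A_{X,\Delta}(E_j)-1)E_j$ together with $\mu^*(K_{X/C}+\Delta+L+H)\sim_{\mathbb{Q},C}0$ and $L'=\mu^*L-E$, the defining numerical condition determines $D'=D_{(X',\Delta',H,L')}$ explicitly, and a log-discrepancy computation identifies $\mathrm{lct}(X',\Delta'+D';X_0')$ with $\mathrm{lct}(X,\Delta;\mathfrak{a})_{X_0}$, the threshold of $(X,\Delta)$ along $X_0$ with respect to $\mathfrak{a}$ -- the exact analogue of $\mathrm{lct}(X\times\mathbb{A}^1,\mathfrak{a};X\times\{0\})$ in \S\ref{2nd}. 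Hence $\mathrm{Ding}_{(\Delta',H)}(X',L')=\mathrm{lct}(X,\Delta;\mathfrak{a})_{X_0}-1-L'^{n+1}/((n+1)L_0^n)$. By Theorem \ref{CM} the filtration $\mathscr{F}$ of $(X_0,L_0)$ induced by the two families has barycenter $B_\mathscr{F}=(L'^{n+1}-L^{n+1})/(n+1)!$, so $B_\mathscr{F}/a_0=(L'^{n+1}-L^{n+1})/((n+1)L_0^n)$ and the assertion is reduced to the single inequality
\[
\mathrm{lct}(X,\Delta;\mathfrak{a})_{X_0}\ge 1+\frac{B_\mathscr{F}}{a_0}.
\]

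This last inequality is the heart of the matter, and the step I expect to be hardest, since it is where $\delta\ge 1$ must enter. The mechanism is inversion of adjunction: for a divisor $F$ over $X$ with centre in $X_0$ and $a=\mathrm{ord}_F(X_0)$ one has $A_{X,\Delta}(F)=a+A_{X_0,\Delta_0}(F_0)$ for the induced valuation $F_0$ on $X_0$, while $\mathscr{F}$ records the $\mathrm{ord}_F$-orders of vanishing of sections of $L_0$, so that $1+B_\mathscr{F}/a_0$ is assembled from the expected-vanishing-order invariants $S_{X_0,L_0}(F_0)$. In the divisorial model where $\mu$ extracts a single $F$, these two facts give the clean identity
\[
\mathrm{lct}(X,\Delta;\mathfrak{a})_{X_0}-\Big(1+\frac{B_\mathscr{F}}{a_0}\Big)=\frac{A_{X_0,\Delta_0}(F_0)-S_{X_0,L_0}(F_0)}{\mathrm{ord}_F(X_0)},
\]
whose right-hand side is nonnegative precisely because $\delta(X_0,\Delta_0,L_0)=\inf_{F_0}A_{X_0,\Delta_0}(F_0)/S_{X_0,L_0}(F_0)\ge 1$ (cf.\ \cite{BlJ}). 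For a general, possibly non-divisorial and non-finitely-generated $\mathscr{F}$, I would realize this through the $\delta_k$-invariants following Fujita--Odaka (\cite{FO}, \cite[Theorem 5.1]{Fjt}): approximate $\mathscr{F}$ by the finitely generated truncations $\mathscr{F}_{(k)}$ of Definition \ref{compatible.divisor}, whose flag ideals $\mathfrak{a}_{(k)}$ are cofinal in $\mathfrak{a}$; choose bases of $H^0(X_0,kL_0)$ adapted to $\mathscr{F}_{(k)}$, so that the resulting $k$-basis type divisors have log canonical thresholds at least $\delta_k(X_0,\Delta_0,L_0)$; track these through $\mu$ to bound $\mathrm{lct}(X,\Delta;\mathfrak{a}_{(k)})_{X_0}$; and pass to the limit with $\delta_k\to\delta\ge 1$ and $B_{\mathscr{F}_{(k)}}\to B_\mathscr{F}$ (Lemma \ref{restrr}). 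The delicate points are confirming that the adapted basis-type divisors compute both the flag-ideal threshold and the barycenter in the limit, and checking that the twist $H$ enters only through leading intersection numbers; crucially, only the leading coefficient $b_0$ of the weight is needed here, so the $b_1$-regularization obstruction of the general Donaldson--Futaki case (Remark \ref{szerem}) never appears, which is exactly what makes Ding minimization accessible.
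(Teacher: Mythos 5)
Your reduction is essentially the paper's: pass to the ample model so that $\mu$ is the blow up of an ideal $\mathfrak{a}$ with $\mu^{-1}\mathfrak{a}=\mathcal{O}(-r_0E)$, write the difference of the two Ding degrees as $\mathrm{lct}(X,\Delta+\mathfrak{a};X_0)-1$ minus the normalized limit $\lim_k w_{\mathscr{F}}(k)/(r_0k\dim R_k)$ supplied by Theorem \ref{CM}, and observe that only the leading coefficient $b_0$ is needed, so no regularization issue arises. The genuine gap is in the step you yourself call the heart, namely $\mathrm{lct}(X,\Delta+\mathfrak{a};X_0)\ge 1+B_{\mathscr{F}}/a_0$: both mechanisms you propose presuppose a test-configuration structure that $\pi:(X,L)\to C$ does not have. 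The identity $A_{X,\Delta}(F)=\mathrm{ord}_F(X_0)+A_{X_0,\Delta_0}(F_0)$ is not available here, because a divisor $F$ over $X$ with center in $X_0$ has no canonically ``induced valuation $F_0$ on $X_0$''; the adjunction formula you quote is the correspondence of \cite{BHJ} for $\mathbb{G}_m$-invariant valuations on a product $X_0\times\mathbb{A}^1$, and the whole difficulty of Proposition \ref{ding} is that $X$ is not birational to $X_0\times C$ near the central fiber. For the same reason, the flag ideals $\mathfrak{a}_{(k)}$ of Definition \ref{compatible.divisor} are ideals on $X_0\times\mathbb{A}^1$, while $\mathfrak{a}$ is an ideal on $X$: the phrase ``$\mathfrak{a}_{(k)}$ cofinal in $\mathfrak{a}$'' compares objects on different spaces, and there is no morphism along which to ``track'' basis-type divisors from $X_0\times\mathbb{A}^1$ into $X$. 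So the Fujita-style flag-ideal argument cannot be run as you describe.

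The missing bridge, which is exactly what the paper supplies, is the elementary divisor theorem over the DVR $\mathcal{O}_{C,0}$: one chooses a free basis $\{s_i\}$ of $\pi_*\mathcal{O}(kr_0L)\otimes\mathcal{O}_{C,0}$ such that $\{s'_i=t^{\lambda_i}s_i\}$ is a free basis of $\pi'_*\mathcal{O}(kr_0L')\otimes\mathcal{O}_{C,0}$. This single device does all the work: first, $w_{\mathscr{F}}(k)=-\sum_i\lambda_i+O(k^{n-1})$; second, the restrictions $s_i|_{X_0}$ form a basis of $H^0(X_0,kr_0L_0)$, so $\frac{1}{r_0kN_k}\sum_i D_{i,0}$ is an $r_0k$-basis type divisor and $(X_0,\Delta_0+\frac{1}{r_0kN_k}\sum_i D_{i,0})$ is klt once $\delta_{r_0k}>1$; third, inversion of adjunction (\cite[Theorem 5.50]{KM}) lifts this klt-ness to lc-ness of $(X,\Delta+\frac{1}{r_0kN_k}\sum_i D_i+X_0)$ near $X_0$, and the relation $\mathrm{div}(s'_i)=D_i+\lambda_iX_0$, together with the fact that each $s'_i$ vanishes along $\mathfrak{a}$ to the required order, converts this into $\mathrm{lct}(X,\Delta+\mathfrak{a};X_0)\ge 1-\frac{1}{r_0kN_k}\sum_i\lambda_i$, which converges to the desired bound. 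Without simultaneous bases there is no way to produce, from a basis type divisor on $X_0$, a divisor on $X$ that simultaneously dominates $\mathfrak{a}$ and has controlled multiplicity along $X_0$. Note also that this argument needs $\delta_{r_0k}>1$ for large $k$, hence $\delta>1$; the boundary case $\delta(X_0,\Delta_0,L_0)=1$ is handled in the paper by the perturbation replacing $L$ by $(1-\epsilon)L$ and $H$ by $H+\epsilon L$ and letting $\epsilon\to0$, a point your limit over $\delta_k$ glosses over, since $\delta_k$ may remain below $1$ when $\delta=1$.
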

 
 \begin{proof}
First, we consider the case when $\delta(X_0,\Delta_0,L_0)>1$. We may assume that $L'=\mu^*L-E$ where $E$ is an effective exceptional $\mathbb{Q}$-divisor by Lemma \ref{ding.lemma} (1). Take sufficiently divisible $r_0\in\mathbb{Z}_{>0}$ that $r_0L$ and $r_0L'$ are $\mathbb{Z}$-Cartier and that there exists an ideal $\mathfrak{a}\subset \mathcal{O}_X$ such that $\mu^{-1}\mathfrak{a}=\mathcal{O}(-r_0E)$. We may also assume that $-E$ is $\mu$-ample and $X'$ is obtained by blowing $X$ up along $\mathfrak{a}$. As in the proof of Theorem \ref{CM}, there exists a good filtration $\mathscr{F}$ on $R=\bigoplus_{m\ge0} R_m$ where $R_m=H^0(X_0,mr_0L_0)$. By Theorem \ref{CM}, we see that $\lim_{k\to\infty}\frac{w_{\mathscr{F}}(k)}{k^{n+1}}=\frac{r_0^{n+1}}{(n+1)!}(L'^{n+1}-L^{n+1})$. Thus, we have that 
 \begin{align*}
 \mathrm{Ding}_{(\Delta',H)}(X',L')&-\mathrm{Ding}_{(\Delta,H)}(X,L)=\mathrm{lct}(X,\Delta+\mathfrak{a};X_0)-1-\lim_{k\to\infty}\frac{w_{\mathscr{F}}(k)}{r_0k\,\mathrm{dim}\,R_k}.
 \end{align*} 
For the definition of 
\[
\mathrm{lct}(X,\Delta+\mathfrak{a};X_0)=\sup\{t\in\mathbb{Q}|(X,\Delta+\mathfrak{a}+tX_0)\textrm{ is sublc around }X_0\},
\] we refer to \cite[Definition 2.6]{Fjt2}. Since $\mathcal{O}_{C,0}$ is a discrete valuation ring, there exist free bases $\{s_1,\cdots,s_{N_k}\}$ of $\pi_*\mathcal{O}(kr_0L)\otimes_{\mathcal{O}_C}\mathcal{O}_{C,0}$ where $N_k=\mathrm{dim}\,R_k$ and $\{s'_1,\cdots,s'_{N_k}\}$ of $\pi'_*\mathcal{O}(kr_0L')\otimes_{\mathcal{O}_C}\mathcal{O}_{C,0}$ such that $s'_i=t^{\lambda_i}s_i$ for some $\lambda_i\in\mathbb{Z}_{\ge0}$. Here, $t$ is a generator of the maximal ideal of $\mathcal{O}_{C,0}$ and recall that any homomorphism of free $\mathcal{O}_{C,0}$-modules is represented by a diagonal matrix. By Theorem \ref{CM}, $w_{\mathscr{F}}(k)=-\sum_{i=1}^{N_k}\lambda_i+O(k^{n-1})$. For sufficiently large $k$, we may assume that $\delta_{r_0k}(X_0,\Delta_0,L_0)>1$ by \cite[Theorem A]{BlJ}. Let $\mathrm{div}(s_i)=D_i$. Then, consider $s'_i\in\pi_*\mathcal{O}(kr_0L)\otimes_{\mathcal{O}_C}\mathcal{O}_{C,0}$ and we have
 \[
 \mathrm{lct}(X,\Delta+\mathfrak{a};X_0)\ge \mathrm{lct}\left(X,\Delta+\frac{1}{r_0kN_k}\sum_{i=1}^{N_k}\mathrm{div}(s'_i);X_0\right).
 \]
 Here, $(X_0,\Delta_0+\frac{1}{r_0kN_k}\sum_{i=1}^{N_k}D_{i,0})$ is klt since $\frac{1}{r_0kN_k}\sum_{i=1}^{N_k}D_{i,0}$ is an $r_0k$-basis type divisor with respect to $L_0$. By the inversion of adjunction \cite[Theorem 5.50]{KM}, $(X,\Delta+\frac{1}{r_0kN_k}\sum_{i=1}^{N_k}D_i+X_0)$ is lc around $X_0$ for sufficiently divisible $k$. On the other hand,
 \[
 \frac{1}{r_0kN_k}\sum_{i=1}^{N_k}\mathrm{div}(s'_i)+\left(1-\frac{1}{r_0kN_k}\sum_{i=1}^{N_k}\lambda_i\right)X_0=\frac{1}{r_0kN_k}\sum_{i=1}^{N_k}D_i+X_0.
 \]
 Hence we obtain $$\mathrm{lct}(X,\Delta+\mathfrak{a};X_0)-1+\sum_{i=1}^{N_k}\frac{\lambda_i}{r_0kN_k}\ge0$$ for such $k$. Therefore, it follows that $ \mathrm{Ding}_{(\Delta',H)}(X',L')\ge \mathrm{Ding}_{(\Delta,H)}(X,L)$.
 
 Finally, we treat the case when $\delta(X_0,\Delta_0,L_0)=1$. We have shown that   
 \[
 \mathrm{Ding}_{(\Delta',H+\epsilon L)}(X',(1-\epsilon)L')\ge \mathrm{Ding}_{(\Delta,H+\epsilon L)}(X,(1-\epsilon)L)
 \]
  for sufficiently small $\epsilon>0$ since $\delta(X_0,\Delta_0,(1-\epsilon )L_0)=(1-\epsilon)^{-1}>1$. It is easy to see that $D_{(X',\Delta',H+\epsilon L,(1-\epsilon)L')}$ depends on $\epsilon$ continuously and so does $\mathrm{lct}(X',\Delta'+D_{(X',\Delta',H+\epsilon L,(1-\epsilon)L')};X'_0)$. Thus, we have 
  \begin{align*}
 &\mathrm{Ding}_{(\Delta',H)}(X',L')- \mathrm{Ding}_{(\Delta,H)}(X,L)\\
&=  \lim_{\epsilon\to0} (\mathrm{Ding}_{(\Delta',H+\epsilon L)}(X',(1-\epsilon)L')- \mathrm{Ding}_{(\Delta,H+\epsilon L)}(X,(1-\epsilon)L))\ge0.
  \end{align*}
  We complete the proof.
 \end{proof}

\subsubsection{Proof of Theorem \ref{mt2}}\label{3.3.3}

We combine these results to prove Conjecture \ref{CMconj} when $(X_0,\Delta_0,L_0)$ is specially K-(semi)stable. The following is Theorem \ref{mt2}.
 
 \begin{thm}[CM minimization for special K-stability]\label{CM2}
 Let $\pi:(X,\Delta,L)\to C$ and $\pi':(X',\Delta',L')\to C$ be two polarized log families over a proper smooth curve such that there exists a $C^{\circ}$-isomorphism $f^{\circ}:(X,\Delta,L)\times_CC^{\circ}\cong (X',\Delta',L')\times_CC^{\circ}$. Suppose that $K_X+\Delta$ is $\mathbb{Q}$-Cartier and $(X_0,\Delta_0,L_0)$ is specially K-semistable.
 
 Then  \[
 \mathrm{CM}((X',\Delta',L')/C)\ge\mathrm{CM}((X,\Delta,L)/C).
 \]
 Furthermore, if $(X_0,\Delta_0,L_0)$ is specially K-stable, then equality holds iff $f^{\circ}$ can be extended to a $C$-isomorphism $f:(X,\Delta,L)\cong (X',\Delta',L')$.
 \end{thm}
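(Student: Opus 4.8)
The plan is to decompose the (log) CM degree into a log-twisted Ding degree and a J-degree, and then to apply the two minimization results, Proposition \ref{ding} and Proposition \ref{jcm2}, simultaneously. Write $\delta=\delta(X_0,\Delta_0,L_0)$ and set $M=K_{X/C}+\Delta+\delta L$, so that $M_0=K_{X_0}+\Delta_0+\delta L_0$ is nef (resp.\ ample) by special K-semistability (resp.\ stability). As in the proof of Theorem \ref{CM}, I would first use Lemma \ref{fomulalem}(2) and Lemma \ref{ding.lemma}(1) to reduce to the situation where there is a genuine birational morphism $\mu:X'\to X$ over $C$ extending $f^{\circ-1}$, with $L'=\mu^*L-E$ for an effective $\mu$-exceptional $E$ with $-E$ being $\mu$-ample. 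The crucial bookkeeping is the choice of polarization and twist for the Ding part: I would run the Ding degree with the rescaled polarization $\delta L$ and the twist $H=-M$, since then $-(K_{X/C}+\Delta+H)\sim_{\mathbb{Q},C}\delta L$ and, by the scaling identity $\delta(X_0,\Delta_0,\delta L_0)=\delta/\delta=1$, the hypothesis $\delta(X_0,\Delta_0,\delta L_0)\ge1$ of Proposition \ref{ding} is met. (When $\delta$ is irrational one approximates by rational multiples and passes to the limit exactly as in the final paragraph of the proof of Proposition \ref{ding}.)

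With these choices, linearity of $\mathcal{J}^{\bullet}$ in the twist gives $\mathcal{J}^{-M}=-\mathcal{J}^{M}$, and the scale-invariance of the CM and J-degrees in $L$ turns Lemma \ref{ding.lemma}(3) into
\[
\mathrm{CM}((X',\Delta',L')/C)\ \ge\ \mathrm{Ding}_{(\Delta',-M)}(X',\delta L')+\mathcal{J}^{M}(X',L'),
\]
with the analogous statement for $(X,\Delta,L)$; moreover for the source family this is an \emph{equality}, because $K_{X/C}+\Delta+\delta L-M\sim_{\mathbb{Q},C}0$ and $(X,\Delta+X_0)$ is lc near $X_0$ by inversion of adjunction (here I use that $(X_0,\Delta_0)$ is slc, which holds since special K-semistability is defined for slc pairs). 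Subtracting, the problem reduces to the two inequalities
\[
\mathrm{Ding}_{(\Delta',-M)}(X',\delta L')\ \ge\ \mathrm{Ding}_{(\Delta,-M)}(X,\delta L),\qquad \mathcal{J}^{M}(X',L')\ \ge\ \mathcal{J}^{M}(X,L).
\]
The first is precisely Proposition \ref{ding} for the family $(X,\Delta,\delta L)$ twisted by $-M$; the second is Proposition \ref{jcm2} applied with the nef class $H_0=M_0$, whose $\mathrm{J}^{M_0}$-semistability is guaranteed by special K-semistability (applying \ref{jcm2} to the ample twists $M_0+\epsilon L_0$ and letting $\epsilon\to0^+$ through rational values). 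Adding them yields $\mathrm{CM}((X',\Delta',L')/C)\ge\mathrm{CM}((X,\Delta,L)/C)$.

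For the equality statement in the specially K-\emph{stable} case, suppose the two CM degrees agree. Then the middle chain of inequalities is forced to be an equality; in particular $\mathcal{J}^{M}(X',L')=\mathcal{J}^{M}(X,L)$. Now special K-stability provides a rational $\epsilon>0$ for which $M_0-\epsilon L_0$ is ample and $(X_0,L_0)$ is \emph{uniformly} $\mathrm{J}^{M_0-\epsilon L_0}$-stable. Writing $\mathcal{J}^{M-\epsilon L}=\mathcal{J}^{M}-\epsilon\,\mathcal{J}^{L}$ and combining the equality $\mathcal{J}^{M}(X',L')=\mathcal{J}^{M}(X,L)$ with the inequality $\mathcal{J}^{L}(X',L')\ge\mathcal{J}^{L}(X,L)$ from Lemma \ref{wx}, I get $\mathcal{J}^{M-\epsilon L}(X',L')\le\mathcal{J}^{M-\epsilon L}(X,L)$; but uniform $\mathrm{J}^{M_0-\epsilon L_0}$-stability forces the reverse inequality (Proposition \ref{jcm2}), so both are equalities. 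The rigidity clause of Proposition \ref{jcm2} (equivalently of Lemma \ref{wx}) then gives $\mu^*L\sim_{\mathbb{Q},C}L'$, i.e.\ $E\sim_{\mathbb{Q},C}0$; since $E$ is effective, $\mu$-exceptional and $-E$ is $\mu$-ample, the negativity lemma forces $E=0$, and hence $\mu$ is an isomorphism. This extends $f^{\circ}$ to a $C$-isomorphism $(X,\Delta,L)\cong(X',\Delta',L')$, with $f_*\Delta=\Delta'$ because $\mu$ is an isomorphism over $C^\circ$ and both boundaries are the closures of the same divisor.

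I expect the main obstacle to be the correct matching of signs and scalings so that \emph{both} the Ding term and the J term point in the direction of their respective minimization theorems; this is exactly what the choice of the twist $-M$ together with the rescaled polarization $\delta L$ achieves, and it is what lets the two independent inequalities add up rather than cancel. Two further technical hurdles deserve care: first, the Ding degree and Proposition \ref{ding} are stated for \emph{irreducible} $X_0$, so the genuinely reducible case (and the degenerate $\delta=0$, non-normal case) must either be reduced to the irreducible normal situation of \S\ref{heart} or handled through the reducible-scheme machinery of \S\ref{nonreduced}; second, the limiting arguments in $\epsilon$ (needed both for irrational $\delta$ and for passing from the ample/uniform twists to the nef/semistable ones) require continuity of the Ding and J-degrees in the twist, which I would justify exactly as in the final paragraphs of the proofs of Propositions \ref{ding} and \ref{jcm2}.
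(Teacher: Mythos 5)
When $(X_0,\Delta_0)$ is klt, your argument is, up to bookkeeping, the paper's own proof (its ``Case 1''): the paper rescales the polarization to $(\delta-\epsilon)L$ and twists by $H=-(K_{X/C}+\Delta+L)$, whereas you rescale to $\delta L$ and twist by $-M$; both implement the same decomposition of $\mathrm{CM}$ into a log-twisted Ding degree plus a J-degree via Lemma \ref{ding.lemma}(3), followed by Proposition \ref{ding} and Proposition \ref{jcm2}, and your derivation of $\mu^*L\sim_{\mathbb{Q},C}L'$ in the equality case (via linearity of $\mathcal{J}^{\bullet}$ in the twist, Lemma \ref{wx}, and the rigidity clause of Proposition \ref{jcm2}) is correct, indeed somewhat more detailed than the paper's.

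The genuine gap is the non-klt case, which lies inside the scope of the statement and is not a removable technicality. Special K-semistability is defined for polarized \emph{slc} pairs, and by Definition \ref{desing} one has $\delta(X_0,\Delta_0,L_0)=0$ whenever $(X_0,\Delta_0)$ is not klt --- in particular for non-normal slc fibers and for lc fibers with $K_{X_0}+\Delta_0$ ample, precisely the case needed to recover the Wang--Xu separatedness result \cite{WX}. When $\delta=0$ your scheme collapses: $\delta L$ is not a polarization, $M=K_{X/C}+\Delta$, and no rescaling of $L$ can produce the hypothesis $\delta\ge1$ of Proposition \ref{ding}, so the Ding half of your decomposition cannot even be written down; the obstruction is not irreducibility of $X_0$ or the lack of reducible-scheme machinery, as you suggest, but the vanishing of $\delta$. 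The paper's ``Case 2'' disposes of this by a different argument that drops the Ding term entirely: apply Proposition \ref{jcm2} with twist $H=K_{X/C}+\Delta$ (legitimate, since special K-semistability with $\delta=0$ says exactly that $(X_0,L_0)$ is $\mathrm{J}^{K_{X_0}+\Delta_0}$-semistable and $K_{X_0}+\Delta_0$ is nef), and then observe that the leftover term in the difference of CM degrees is $\frac{1}{L_0^n}\left(K_{X'}+\Delta'-\mu^*(K_X+\Delta)\right)\cdot L'^n$, which is nonnegative because $K_{X'}+\Delta'-\mu^*(K_X+\Delta)$ is effective: this follows from lc-ness of $(X,\Delta+X_0)$, i.e., from Kawakita's inversion of adjunction \cite{Ka} applied to the slc fiber. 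As it stands, your proposal proves the theorem only under the additional hypothesis that $(X_0,\Delta_0)$ is klt.
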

 
 \begin{proof}
{\it Case 1. $(X_0,\Delta_0)$ is klt}. 
Suppose first that $(X_0,\Delta_0,L_0)$ is specially K-stable. As in the proof of Theorem \ref{CM}, we may assume that there exists a birational morphism $\mu:X'\to X$ such that $\mu|_{X'^\circ}=f^{\circ-1}$. Replacing $L$ by $(\delta(X_0,\Delta_0,L_0)-\epsilon)L$ for sufficiently small $\epsilon>0$ such that $\delta(X_0,\Delta_0,L_0)-\epsilon\in\mathbb{Q}$, we may also assume that $\delta(X_0,\Delta_0,L_0)\ge1$, $(X_0,\Delta_0,L_0)$ is uniformly $\mathrm{J}^{K_{X_0}+\Delta_0+ L_0}$-stable and $K_{X_0}+\Delta_0+ L_0$ is ample. Let $H=-(K_X+\Delta+L)$. Then, 
 \begin{align*}
  \mathrm{CM}((X',\Delta',L')/C)&= \mathrm{CM}((X',\Delta',L')/C)+ \mathcal{J}^H(X',L')- \mathcal{J}^H(X',L')\\
  &=\mathrm{CM}_{(\Delta',H)}(X',L')- \mathcal{J}^H(X',L')\\
  &\ge\mathrm{Ding}_{(\Delta',H)}(X',L')+ \mathcal{J}^{K_X+\Delta+L}(X',L'),\\
  \mathrm{CM}((X,\Delta,L)/C)&=\mathrm{CM}_{(\Delta,H)}(X,L)- \mathcal{J}^H(X,L)\\
  &=\mathrm{Ding}_{(\Delta,H)}(X,L)+ \mathcal{J}^{K_X+\Delta+L}(X,L).
 \end{align*}
Here, we applied Lemma \ref{ding.lemma} (3). By Theorem \ref{ding}, $\mathrm{Ding}_{(\Delta',H)}(X',L')\ge\mathrm{Ding}_{(\Delta,H)}(X,L)$. On the other hand, $\mathcal{J}^{K_X+\Delta+L}(X',L')\ge\mathcal{J}^{K_X+\Delta+L}(X,L)$ and hence we have the desired inequality by Proposition \ref{jcm2}. Furthermore, equality holds iff $\mu^*L\sim_{\mathbb{Q},C}L'$.

When $(X_0,\Delta_0,L_0)$ is specially K-semistable, it is easy to see that $$\mathrm{CM}_{(\Delta',\epsilon L)}(X',L')\ge \mathrm{CM}_{(\Delta,\epsilon L)}(X,L)$$ for $\epsilon>0$ by the same argument as above. By taking the limit $\epsilon\to 0$, we obtain the desired inequality.

 {\it Case 2. General case}. We may assume that $(X_0,\Delta_0,L_0)$ is not klt but slc. Now, we may assume that $\pi'$ is a semiample family such that there exists a birational morphism $\mu:X' \to X$ such that $\mu|_{X'^\circ}=f^{\circ-1}$. Applying Proposition \ref{jcm2} to the case when $H=K_{X/C}+\Delta$, it suffices to show that $$(K_{X'}+\Delta'-\mu^*(K_X+\Delta))\cdot L'^n\ge 0.$$ Indeed, $L'$ is semiample over $X$ and $K_{X'}+\Delta'-\mu^*(K_X+\Delta)$ is effective since $(X,\Delta+X_0)$ is lc by \cite{Ka}.
 \end{proof}
 
 Finally, we obtain the following corollaries. First, we apply Theorem \ref{CM} to test configurations and obtain:
 \begin{cor}\label{corf1}
 Let $(X,\Delta,L)$ be a klt polarized pair. If $(X,\Delta,L)$ is specially K-stable (resp., semistable), then $(X,\Delta,L)$ is uniformly K-stable (resp., K-semistable).
 \end{cor}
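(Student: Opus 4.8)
The plan is to reduce the assertion, which concerns arbitrary test configurations, to the family-over-a-curve results already proved, by compactifying a test configuration over $\mathbb{P}^1$. Let $(\mathcal{X},\mathcal{L})$ be an ample deminormal test configuration for $(X,\Delta,L)$; replacing it by a deminormal model dominating $X_{\mathbb{A}^1}$ (the DF invariant and the nA functionals are pullback-invariant), I would form its canonical compactification $(\overline{\mathcal{X}},\overline{\mathcal{D}},\overline{\mathcal{L}})$ over $C=\mathbb{P}^1$, a log polarized family, and note that the trivial test configuration compactifies to $(X\times\mathbb{P}^1,\Delta\times\mathbb{P}^1,L\times\mathbb{P}^1)$. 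These two families are $C^\circ$-isomorphic, the reference fiber of the second is exactly $(X,\Delta,L)$, and from the formula (\ref{CMform}) one reads off $\mathrm{CM}((\overline{\mathcal{X}},\overline{\mathcal{D}},\overline{\mathcal{L}})/\mathbb{P}^1)=\mathrm{DF}_\Delta(\mathcal{X},\mathcal{L})$ while $\mathrm{CM}((X\times\mathbb{P}^1,\ldots)/\mathbb{P}^1)=0$ (all degree-$(n+1)$ intersections of classes pulled back from the $n$-dimensional $X$ vanish). Likewise the nA Ding and J-functionals of $(\mathcal{X},\mathcal{L})$ equal the corresponding log-twisted Ding degree and J-degree of the compactified family. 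Thus I can run the proof of Theorem \ref{CM2} with this pair of families.

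For the stable case, since $(X,\Delta)$ is klt we have $\delta_0:=\delta(X,\Delta,L)>0$. Picking $\epsilon>0$ small with $\delta_0-\epsilon\in\mathbb{Q}$ and rescaling $L$ to $M:=(\delta_0-\epsilon)L$ (uniform K-stability is invariant under positive rational rescaling of the polarization, since $\mathrm{DF}$ is scale-invariant and the minimum norm scales by the same factor), I get $\delta(X,\Delta,M)\ge1$, that $H:=K_X+\Delta+M$ is ample (ampleness of $K_X+\Delta+\delta_0L$ is open, hence survives subtracting the small $\epsilon L$), and uniform $\mathrm{J}^{H}$-stability. Applying the decomposition of Lemma \ref{ding.lemma}(3) and \cite[Appendix]{Hat} yields
\[
\mathrm{DF}_\Delta(\mathcal{X},\mathcal{M})\ge \mathrm{Ding}^{\mathrm{NA}}_{(\Delta,-H)}(\mathcal{X},\mathcal{M})+(\mathcal{J}^{H})^{\mathrm{NA}}(\mathcal{X},\mathcal{M}),
\]
where the twists $-H$ and $H$ are arranged so the two $\mathcal{J}^{\pm H}$ contributions cancel and a single J-functional remains. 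The Ding term is $\ge0$ by Theorem \ref{ding} (comparison with the trivial family, whose Ding degree is $0$, using $\delta(X,\Delta,M)\ge1$). The key point for \emph{uniformity} is that the J-term is bounded below \emph{directly} by uniform $\mathrm{J}^{H}$-stability, namely $(\mathcal{J}^{H})^{\mathrm{NA}}(\mathcal{X},\mathcal{M})\ge(\mathcal{J}^{\epsilon'M})^{\mathrm{NA}}(\mathcal{X},\mathcal{M})$ for a fixed $\epsilon'>0$, rather than by the weaker J-minimization of Proposition \ref{jcm2} (which only gives $\ge0$). Hence $\mathrm{DF}_\Delta(\mathcal{X},\mathcal{M})\ge(\mathcal{J}^{\epsilon'M})^{\mathrm{NA}}(\mathcal{X},\mathcal{M})$ for every ample deminormal test configuration, which is uniform K-stability.

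For the semistable case I have only $\mathrm{J}^{K_X+\Delta+(\delta_0+\epsilon)L}$-semistability for $\epsilon\ge0$ with $\delta_0+\epsilon\in\mathbb{Q}$ and $K_X+\Delta+\delta_0L$ nef, and I want $\mathrm{DF}_\Delta\ge0$. For rational $c\nearrow\delta_0$, rescaling to $M_c:=cL$ gives $\delta(X,\Delta,M_c)>1$, so the Ding term is again $\ge0$ and $\mathrm{DF}_\Delta(\mathcal{X},\mathcal{L})=\mathrm{DF}_\Delta(\mathcal{X},\mathcal{M}_c)\ge(\mathcal{J}^{K_X+\Delta+cL})^{\mathrm{NA}}(\mathcal{X},\mathcal{M}_c)$. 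Letting $c\to\delta_0^-$ and using the scaling and linearity of the nA J-functional, the right-hand side tends to a positive multiple of $(\mathcal{J}^{K_X+\Delta+\delta_0L})^{\mathrm{NA}}(\mathcal{X},\mathcal{L})$, which is $\ge0$ because it is the limit over rational $\epsilon\to0^+$ of the nonnegative quantities $(\mathcal{J}^{K_X+\Delta+(\delta_0+\epsilon)L})^{\mathrm{NA}}(\mathcal{X},\mathcal{L})$ (continuity in the twist, $\mathrm{J}$-semistability, and nefness of $K_X+\Delta+\delta_0L$). Thus $\mathrm{DF}_\Delta(\mathcal{X},\mathcal{L})\ge0$, i.e. K-semistability.

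The hard inputs — the decomposition $\mathrm{DF}\ge\mathrm{Ding}^{\mathrm{NA}}+(\mathcal{J})^{\mathrm{NA}}$ and Ding-minimization — are already available from \cite[Appendix]{Hat} and Theorem \ref{ding}, so the work is organizational. I expect the main obstacle to be twofold: first, isolating uniformity, which must be extracted by feeding the test configuration directly into the uniform $\mathrm{J}^{H}$-stability hypothesis (the family-level J-minimization Proposition \ref{jcm2} is too weak); and second, the irrational-$\delta_0$ semistable case, where Ding forces the rescaling factor $c\le\delta_0$ whereas J-semistability is only supplied for twists $\ge\delta_0$, so the two constraints meet only in the limit $c\to\delta_0$ and the argument must be pushed to that limit using nefness and the continuity of the nA J-functional in its twist.
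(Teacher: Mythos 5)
Your proposal is correct and follows essentially the same route as the paper's (largely implicit) proof: compactify the test configuration over $\mathbb{P}^1$, compare it with the product family via the identification of $\mathrm{DF}$, the nA $\mathrm{J}$-functional and the nA Ding invariant with the corresponding degrees, and run the decomposition $\mathrm{CM}\ge\mathrm{Ding}+\mathcal{J}$ from the proof of Theorem \ref{CM2}, with Theorem \ref{ding} controlling the Ding term. Your two refinements---extracting \emph{uniformity} by feeding the test configuration directly into the uniform $\mathrm{J}^{H}$-stability hypothesis instead of Proposition \ref{jcm2}, and handling possibly irrational $\delta$ in the semistable case by a limiting argument in the twist---are exactly the points the paper leaves to the reader.
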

 
 We prove $\clubsuit$ for specially K-stable pairs in \S\ref{intro}.
 
 \begin{cor}[Separatedness of $\mathbb{Q}$-Gorenstein specially K-stable families]\label{sep}
 Let $\pi:(X,\Delta,L)\to C$ and $\pi':(X',\Delta',L')\to C$ be two polarized $\mathbb{Q}$-Gorenstein families over a smooth affine curve such that there exists a $C^{\circ}$-isomorphism $f^{\circ}:(X,\Delta,L)\times_CC^{\circ}\cong (X',\Delta',L')\times_CC^{\circ}$. Suppose that $(X_0,\Delta_0,L_0)$ is specially K-stable and $(X'_0,\Delta'_0,L'_0)$ is specially K-semistable.
 
 Then  $f^{\circ}$ can be extended to a $C$-isomorphism $f:(X,\Delta,L)\cong (X',\Delta',L')$.
 \end{cor}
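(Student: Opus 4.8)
The plan is to reduce to Theorem \ref{CM2} over a projective base and then run a two-sided minimization (a ``squeeze'') argument. First I would compactify: let $\bar C\supset C$ be the smooth projective model of the affine curve $C$ and write $\bar C\setminus C=\{p_1,\dots,p_s\}$, so that $0\in C$ is disjoint from the $p_j$. Over the punctured curve $\bar C\setminus\{0\}$ the two families are canonically identified by $f^{\circ}$ along $C^{\circ}=C\setminus\{0\}$, so it is enough to extend this common family across the $p_j$ to a single proper, flat, normal, $\mathbb{Q}$-Gorenstein model $(\mathcal{Y},\mathcal{D},\mathcal{N})\to\bar C\setminus\{0\}$. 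Gluing $(\mathcal{Y},\mathcal{D},\mathcal{N})$ with the original germ $(X,\Delta,L)$ (resp. $(X',\Delta',L')$) along a punctured neighbourhood of $0$ produces two polarized $\mathbb{Q}$-Gorenstein families $(\bar X,\bar\Delta,\bar L)$ and $(\bar X',\bar\Delta',\bar L')$ over the proper curve $\bar C$ that agree over $\bar C\setminus\{0\}$ and whose fibres over $0$ are the prescribed specially K-(semi)stable ones.

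Next I would apply Theorem \ref{CM2} twice, in opposite directions, and read off the equality case. Since $(X_0,\Delta_0,L_0)=(\bar X_0,\bar\Delta_0,\bar L_0)$ is specially K-stable, hence specially K-semistable, and $\bar X'$ is normal, Theorem \ref{CM2} gives $\mathrm{CM}((\bar X,\bar\Delta,\bar L)/\bar C)\le\mathrm{CM}((\bar X',\bar\Delta',\bar L')/\bar C)$. Exchanging the roles of the two families, which is legitimate because $f^{\circ}$ has an inverse and both total spaces are normal and $\mathbb{Q}$-Gorenstein, and using that $(X'_0,\Delta'_0,L'_0)$ is specially K-semistable, Theorem \ref{CM2} yields the reverse inequality $\mathrm{CM}((\bar X',\bar\Delta',\bar L')/\bar C)\le\mathrm{CM}((\bar X,\bar\Delta,\bar L)/\bar C)$. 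The two inequalities force
\[
\mathrm{CM}((\bar X,\bar\Delta,\bar L)/\bar C)=\mathrm{CM}((\bar X',\bar\Delta',\bar L')/\bar C).
\]
Now the equality clause of Theorem \ref{CM2}, applied with $(X_0,\Delta_0,L_0)$ specially K-stable and $\bar X'$ normal, shows that $f^{\circ}$ extends to a $C$-isomorphism $(\bar X,\bar\Delta,\bar L)\cong(\bar X',\bar\Delta',\bar L')$ over $\bar C$; restricting over $C$ gives the desired extension of $f^{\circ}$ to $(X,\Delta,L)\cong(X',\Delta',L')$.

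The main obstacle is the first step: producing a single proper, normal, $\mathbb{Q}$-Gorenstein extension over $\bar C\setminus\{0\}$ so that \emph{both} compactified families satisfy the hypotheses of Theorem \ref{CM2}, in particular that $K_{\bar X}+\bar\Delta$ and $K_{\bar X'}+\bar\Delta'$ are $\mathbb{Q}$-Cartier. Across the $p_j$ one may have to pass to a relative $\mathbb{Q}$-factorial or canonical model, and if a finite base change $\bar C'\to\bar C$ is needed to obtain a well-behaved extension one should take it unramified over $0$ and invoke Lemma \ref{fomulalem}(1),(2), so that both inequalities (which merely scale by the degree) are preserved and the equality case still descends. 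What makes this harmless is that, by Theorem \ref{CM} and Corollary \ref{logCm}, the difference $\mathrm{CM}((\bar X',\bar\Delta',\bar L')/\bar C)-\mathrm{CM}((\bar X,\bar\Delta,\bar L)/\bar C)$ is localized at $0$: the contributions of the fibres over the $p_j$ are identical for the two families and cancel, so the comparison is insensitive to the choices made over $\bar C\setminus\{0\}$ and the squeeze argument closes.
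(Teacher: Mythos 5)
Your proposal is correct and follows essentially the same route as the paper: compactify over the smooth projective model $\overline{C}$ so that the two families literally coincide near the added points (the paper does this via properness of Hilbert schemes, plus a modification to restore the $\mathbb{Q}$-Gorenstein condition, exactly the issue you flag), then apply Theorem \ref{CM2} in both directions to force equality of CM degrees and invoke its equality case with the specially K-stable fiber $(X_0,\Delta_0,L_0)$. The two-sided ``squeeze'' you spell out is precisely what the paper's terse ``apply Theorem \ref{CM2} and obtain the assertion'' means, since the special K-semistability of $(X'_0,\Delta'_0,L'_0)$ is needed exactly for the reverse inequality.
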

 
 \begin{proof}
 Note that $C$ is not proper in general. However, by properness of Hilbert schemes, we compactify the family $\pi:(X,\Delta,L)\to C$ to the one over a smooth proper curve $\overline{C}$. If necessary, take a suitable blow up of the compactification of $X$ and we may assume that this is $\mathbb{Q}$-Gorenstein. On the other hand, if $0'\in\overline{C}\setminus C$, we compactify $\pi'$ in the same way as $\pi$ around $0'$. Thus, we may assume that $C$ is proper. Then we apply Theorem \ref{CM2} and obtain the assertion.
 \end{proof}
 
 \begin{cor}\label{aut}
 Let $(X,\Delta,L)$ be a specially K-stable lc pair. Then $\mathrm{Aut}\,(X,\Delta,L)$ is a finite group. Moreover, the identity component $\mathrm{Aut}_0\,(X,\Delta)$ of $\mathrm{Aut}\,(X,\Delta)$ is an Abelian variety.
 \end{cor}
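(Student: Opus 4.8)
The plan is to prove the two assertions in turn: first that $\mathrm{Aut}(X,\Delta,L)$ is finite, and then, granting this, that the affine part of $\mathrm{Aut}_0(X,\Delta)$ is trivial, so that $\mathrm{Aut}_0(X,\Delta)$ is an abelian variety by the Chevalley structure theorem. Since $\mathrm{Aut}(X,\Delta,L)$ preserves the ample class $L$, it acts faithfully on $H^0(X,rL)$ for $r\gg0$, hence is a linear algebraic group of finite type; to show it is finite it suffices to prove that its identity component contains neither a nontrivial $\mathbb{G}_m$ nor a $\mathbb{G}_a$, because any positive-dimensional connected affine algebraic group contains one of these (its Borel subgroup is $T\ltimes U$ with $T$ a torus and $U$ unipotent).

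For the $\mathbb{G}_m$ case I would argue as usual. When $(X,\Delta)$ is klt, Corollary \ref{corf1} shows $(X,\Delta,L)$ is uniformly K-stable, i.e.\ $\mathrm{DF}_\Delta(\mathcal{X},\mathcal{L})\ge\epsilon\,(\mathcal{J}^L)^{\mathrm{NA}}(\mathcal{X},\mathcal{L})$ for some $\epsilon>0$ and all ample deminormal test configurations. A nontrivial one-parameter subgroup $\lambda\colon\mathbb{G}_m\to\mathrm{Aut}(X,\Delta,L)$ produces a nontrivial product test configuration $(\mathcal{X}_\lambda,\mathcal{L}_\lambda)$ with $(\mathcal{J}^L)^{\mathrm{NA}}(\mathcal{X}_\lambda)>0$; since the product configurations of $\lambda$ and $\lambda^{-1}$ have opposite Donaldson-Futaki invariants but the same strictly positive norm, applying uniform K-stability to both would force $0\ge 2\epsilon\,(\mathcal{J}^L)^{\mathrm{NA}}(\mathcal{X}_\lambda)>0$, a contradiction. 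When $(X,\Delta)$ is lc but not klt, Definition \ref{mmt} forces $\delta(X,\Delta,L)=0$ and $K_X+\Delta$ ample, and finiteness of $\mathrm{Aut}(X,\Delta,L)$ then follows from the classical vanishing of vector fields on a pair with ample log canonical class. The $\mathbb{G}_a$ case I would reduce to the $\mathbb{G}_m$ case: a $\mathbb{G}_a$ sits inside a Borel subgroup normalized by a torus, or, more directly, gives a nontrivial test configuration through the Rees construction whose norm is positive while its Donaldson-Futaki invariant vanishes, again contradicting uniform K-stability.

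Granting finiteness, I would treat $\mathrm{Aut}_0(X,\Delta)$ through the Chevalley decomposition $1\to G_{\mathrm{aff}}\to\mathrm{Aut}_0(X,\Delta)\to A\to1$, where $G_{\mathrm{aff}}$ is connected affine and $A$ is an abelian variety, and show $G_{\mathrm{aff}}=\{1\}$. Being connected, $G_{\mathrm{aff}}$ acts trivially on the discrete group $\mathrm{NS}(X)$, so $\sigma^*L\equiv L$ for every $\sigma\in G_{\mathrm{aff}}$; and since every morphism from an affine variety to an abelian variety is constant, the morphism $G_{\mathrm{aff}}\to\mathrm{Pic}^0(X)$ sending $\sigma\mapsto\sigma^*L\otimes L^{-1}$ is identically trivial. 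Hence $\sigma^*L\cong L$ for all $\sigma\in G_{\mathrm{aff}}$, so $G_{\mathrm{aff}}\subseteq\mathrm{Aut}(X,\Delta,L)$, which is finite; a connected finite group is trivial, whence $\mathrm{Aut}_0(X,\Delta)=A$ is an abelian variety.

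The main obstacle I expect lives entirely in the finiteness step and is twofold. First, Corollary \ref{corf1} is stated only for klt pairs, so the strictly lc case must be handled separately through the ampleness of $K_X+\Delta$ that special K-stability provides. Second, ruling out a purely unipotent $\mathbb{G}_a$ is genuinely more delicate than the $\mathbb{G}_m$ case, because $\mathbb{G}_a$ does not by itself generate a test configuration, and one must either pass to a normalizing torus or degenerate it to a $\mathbb{G}_m$-equivariant filtration before uniform K-stability can be invoked. By contrast, the $\mathbb{G}_m$ argument and the $\mathrm{Pic}^0$ reduction in the abelian-variety statement are routine.
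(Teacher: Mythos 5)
Your second assertion (Chevalley decomposition plus the fact that a connected affine group admits only constant morphisms to the abelian variety $\mathrm{Pic}^0(X)$, so the affine part of $\mathrm{Aut}_0(X,\Delta)$ lands inside the finite group $\mathrm{Aut}(X,\Delta,L)$ and is therefore trivial) is correct and is essentially the paper's own argument, which cites \cite[Corollary 1.6]{O} via the quasi-finiteness of $\mathrm{Aut}_0(X,\Delta)\to\mathrm{Pic}^0(X)$. Your $\mathbb{G}_m$ argument in the klt case (via Corollary \ref{corf1}) and your observation that the non-klt lc case forces $\delta(X,\Delta,L)=0$, hence $K_X+\Delta$ ample by Definition \ref{mmt}, are also sound. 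The genuine gap is exactly where you feared: the $\mathbb{G}_a$ case, and neither of your two proposed fixes works. First, a $\mathbb{G}_a$ inside $\mathrm{Aut}(X,\Delta,L)$ need not be normalized by any torus, because the identity component of the automorphism group can be purely unipotent (there may be no $\mathbb{G}_m$ in $\mathrm{Aut}(X,\Delta,L)$ at all), so there is nothing to reduce to. Second, there is no ``Rees construction'' attached to a $\mathbb{G}_a$-action: a test configuration is equivalent to a $\mathbb{G}_m$-action, i.e.\ to a grading/filtration, and a unipotent action provides neither. The natural substitute --- the monodromy-type filtration of the nilpotent generator acting on $\bigoplus_m H^0(X,mL)$ --- is multiplicative with identically vanishing weight function, but it is not known to be finitely generated; to contradict uniform K-stability one would then have to control $\mathrm{DF}$ along finitely generated approximations, which is precisely the regularization problem the paper itself flags as open in Remark \ref{szerem}. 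So the finiteness step, as written, does not close.

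The paper sidesteps this entirely: finiteness of $\mathrm{Aut}(X,\Delta,L)$ is deduced from separatedness (Corollary \ref{sep}) by the argument of \cite[Corollary 3.5]{BX}. Given a nontrivial homomorphism $\rho\colon G\to\mathrm{Aut}(X,\Delta,L)$ with $G=\mathbb{G}_m$ or $\mathbb{G}_a$, one takes two copies of the constant family $(X,\Delta,L)\times\mathbb{A}^1$ and glues them over $\mathbb{A}^1\setminus\{0\}$ by the moving automorphism (e.g.\ $(x,t)\mapsto(\rho(t)x,t)$ for $\mathbb{G}_m$, $(x,t)\mapsto(\rho(1/t)x,t)$ for $\mathbb{G}_a$). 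Both special fibers are the specially K-stable pair itself, so Corollary \ref{sep} forces this $C^{\circ}$-isomorphism to extend across $0$; the extension would define a morphism from all of $\mathbb{A}^1$ into the closed subgroup $\rho(G)$ extending $t\mapsto\rho(t)$, resp.\ $t\mapsto\rho(1/t)$, which is impossible for a nontrivial $\rho$. This treats $\mathbb{G}_m$ and $\mathbb{G}_a$ on an equal footing, needs no klt/non-klt case division, and uses only results already established in the paper; if you want to salvage your write-up, replace the stability-theoretic finiteness argument by this one. (A small additional inaccuracy: $(\mathcal{J}^{L})^{\mathrm{NA}}$ is not symmetric under $\lambda\mapsto\lambda^{-1}$, so the two product configurations need not have ``the same'' norm --- but only positivity of both norms is used, so this is harmless.)
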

 \begin{proof}
 The first assertion follows from Corollary \ref{sep} and the same argument of \cite[Corollary 3.5]{BX}. The rest also follows as \cite[Corollary 1.6]{O} from the fact that the action of the linear algebraic group $\mathrm{Aut}_0\,(X,\Delta)$ on $\mathrm{Pic}^0(X)$ defines a quasi-finite morphism $\mathrm{Aut}_0\,(X,\Delta)\to\mathrm{Pic}^0(X)$ by the first assertion.
 \end{proof}

\end{document}